\newcommand{\GU}{{\rm GU}}
\newcommand{\Sp}{\mbox{\rm Sp}}
\newcommand{\GL}{{\rm GL}}
\newcommand{\St}{\mbox{\rm St}}
\newcommand{\st}{{\rm st}}
\newcommand{\End}{\mbox{\rm End}}
\newcommand{\Hom}{\mbox{\rm Hom}}
\newcommand{\Ind}{\mbox{\rm Ind}}
\newcommand{\cA}{\mathcal{A}}
\newcommand{\cF}{\mathcal{F}}
\newcommand{\cO}{\mathcal{O}}
\newcommand{\cP}{\mathcal{P}}
\newcommand{\cU}{\mathcal{U}}
\newcommand{\Z}{\mathbb{Z}}
\newcommand{\N}{\mathbb{N}}
\newcommand{\Q}{\mathbb{Q}}
\newcommand{\bs}{\mathbf{s}}
\newcommand{\bt}{\mathbf{t}}
\newcommand{\la}{\lambda}
\newcommand{\bla}{\boldsymbol{\la}}
\newcommand{\bmu}{\boldsymbol{\mu}}
\newcommand{\sle}{\widehat{\mathfrak{sl}}_e}
\newcommand{\charac}{\text{\rm char}}
\newcommand{\rG}{G}
\newcommand{\rL}{L}
\newcommand{\rM}{M}
\newcommand{\rP}{P}
\newcommand{\rT}{T}
\newcommand{\rU}{U}
\numberwithin{equation}{section}
\theoremstyle{plain}
\newtheorem{theorem}[equation]{Theorem}
\newtheorem{lemma}[equation]{Lemma}
\newtheorem{corollary}[equation]{Corollary}
\newtheorem{definition}[equation]{Definition}
\newtheorem*{main}{Main Theorem}
\theoremstyle{remark}
\newtheorem{example}[equation]{Example}
\newtheorem{remark}[equation]{Remark}
\newcommand\TikZ[1]{\begin{matrix}\begin{tikzpicture}#1\end{tikzpicture}\end{matrix}}
\newcommand{\emp}{\emptyset}
\newcommand{\slinf}{\mathfrak{sl}_\infty}
\title[]{On modular Harish-Chandra series of finite unitary groups}
\author{Emily Norton}
\address[E.N.]{Department of Mathematics, TU Kaiserslautern, Gottlieb-Daimler-Strasse 48, 67663 Kaiserslautern, Germany} 
\email{norton@mathematik.uni-kl.de}
\begin{document}

\begin{abstract} 
In the modular representation theory of finite unitary groups when the characteristic $\ell$ of the ground field is a unitary prime, 
 the $\widehat{\mathfrak{sl}}_e$-crystal on level $2$ Fock spaces graphically describes the Harish-Chandra branching of unipotent representations restricted to the tower of unitary groups. However, how to determine the cuspidal support of an arbitrary unipotent representation has remained an open question.
 We show that for $\ell$ sufficiently large, the $\mathfrak{sl}_\infty$-crystal on the same level $2$ Fock spaces provides the remaining piece of the puzzle for the full Harish-Chandra branching rule. 
\end{abstract}

\maketitle

\sloppy


\section*{Introduction}

 Unipotent blocks are a distinguished set of blocks of the category of finitely generated $\mathbbm{k}\rG(q)$-modules for $\rG(q)$ a finite group of Lie type over a ground field $\mathbbm{k}$ of characteristic $\ell$ coprime to $q$. Their definition glances back at geometry of Deligne-Lusztig varieties and they possess features reminiscent of Kazhdan-Lusztig theory. 
 Some very basic problems about unipotent blocks remain open, demonstrating the thorny difficulties of modular representation theory even in non-defining characteristic. First, outside of type $A$ we lack a conceptual, algorithmic, or combinatorial description of the decomposition numbers of unipotent blocks.\footnote{For finite unitary groups $\GU_n(q)$ when the characteristic $\ell$ of the ground field is a unitary prime, a result of Gruber and Hiss shows that the decomposition numbers are given in terms of those for finite general linear groups \cite{GruberHiss}; when $\ell$ divides $q+1$, 
 a conjectural formula for decomposition numbers, arising from geometry of the Hilbert scheme of $n$ points in $\mathbb{C}^2$, was recently announced in conference talks by Dudas and Rouquier \cite{DR}. }
 Another basic problem concerns the partitioning of the simple modules into Harish-Chandra series: we would like to know the rule determining which isomorphism classes of simple modules occur in the maximal semisimple quotient of a parabolically induced simple module of a Levi subgroup. This branching rule describes how simple modules are built up from modules of smaller subgroups; this is basic information about simple modules, but additionally, knowing the rule can help in computing decomposition numbers. 
 
 The rather dry name of Harish-Chandra series belies a concept akin to a garden plot thick with perennial flowering plants. Let $\rG_n$ be a finite classical group in an infinite series indexed by $n\in\N$, such as $\Sp_{2n}(q)$ or $\GU_{2n}(q)$. When we look at the set of simple modules for $\rG_n$ at a particular value of $n$, we are looking at the blossoms in our garden at time $n$. Over time, the plants grow, shed their old blossoms and produce new ones, and new plants may appear from seeds that have taken root in the ground. The ``seed" is a cuspidal simple module of a standard Levi subgroup $\rL\leq \rG_n$, that is, a simple $\mathbbm{k}\rL$-module which does not appear in the head of any module Harish-Chandra induced from a smaller Levi subgroup; the plant that grows from the seed is the endomorphism algebra of the induced cuspidal, a Hecke algebra;  the blossoms are the simple modules in the head of the induced cuspidal, and they are in bijection with the simple modules of the Hecke algebra \cite{GHM2}. 
 The data of a cuspidal pair $(L,X_L)$ where $X_L$ is a cuspidal $\mathbbm{k}L$-module, the associated Hecke algebra, and its simple modules, taken over all cuspidal pairs where $L$ is a standard Levi subgroup of $G_n$, yields a partitioning of the simple $\mathbbm{k}\rG_n$-modules into what are called Harish-Chandra series \cite{GHM2}. 
 
 In this paper we address the following problem for an infinite series of finite classical groups: \textit{given a simple module $X$ in a unipotent block of $\mathbbm{k}\rG_n(q)$, determine the Harish-Chandra series of $X$.} We formulate and prove a complete solution to this problem for finite unitary groups $\GU_n(q)$ in the case of a large unitary prime $\ell$ (i.e. the quantum characteristic $e$ given by the order of $-q$ mod $\ell$ is odd and at least $3$), the case that is not yet understood, topping off the substantial partial results of Gerber, Hiss, Jacon, Dudas, Varagnolo, and Vasserot \cite{GHJ},\cite{GH},\cite{DVV1},\cite{DVV2}. As happens for finite general linear groups, the simple modules in unipotent blocks of $\mathbbm{k}\GU_n(q)$ are parametrized by partitions of $n$. For $j\in \N$ we let $\st_j$ denote the  Steinberg  module of $\mathbbm{k}\GL_j(q^2)$, which is the simple unipotent $\mathbbm{k}\GL_j(q^2)$-module labeled by $(1^j)$.
 Our main result is a description of the Harish-Chandra series of a simple module in terms of two crystals on level $2$ Fock spaces:
 
 \noindent \begin{main} (Theorem \ref{conj}) 
 Assume that the order $e$ of $-q$ mod $\ell$ is odd and at least $3$ and that $\ell=\charac(\mathbbm{k})>n/e$. 
 Let $\lambda$ be a partition of $n$ and let $X_\lambda$ be the unipotent $\mathbbm{k}\GU_{n}(q)$-representation labeled by $\lambda$. Then the cuspidal support of $X_\lambda$ is given by $$(\GU_{n'}(q)\times \GL_e(q^2)^{\times k}\times \GL_1(q^2)^{\times r}, X_{\lambda_0}\otimes \st_e^{\otimes k}\otimes \st_1^{\otimes r} )$$ 
 where $r$ is the depth of $\tau(\lambda)$ in the $\sle$-crystal and $k$ is the depth of $\tau(\lambda)$ in the $\slinf$-crystal, $n'=n-2(ek+r)$, and $\tau(\lambda_0)$ is the source vertex of the connected component of these crystals containing $\tau(\lambda)$.
 \end{main}

 \noindent Here, $\tau(\lambda)$ is the twisted $2$-quotient map to a level $2$ Fock space with charge $(s_1,s_2)\in\Z^2$ determined by the $2$-core of $\lambda$, see Section \ref{unip unitary combinat}. The reason for the condition that $e$ be odd is this: the representation theory in unipotent blocks depends only on the order $e$ of $-q\mod\ell$ so long as $\ell$ is big enough, and then there are two kinds of behavior: when $e$ is even ($\ell$ is a ``linear prime") the theory reduces to that of finite general linear groups and is known, while when $e$ is odd ($\ell$ is a ``unitary prime") the representation theory behaves as a type $B$ phenomenon and a complete description is an open problem. We require $\ell>n/e$ so that the answer only depends on $e$, not on $\ell$; this is the case needed for studying decomposition matrices that are likewise dependent on $e$ but independent of $\ell$.
 
 The Main Theorem (Theorem \ref{conj}) augments and completes a conjecture of Gerber-Hiss-Jacon \cite{GHJ}, now a theorem of Dudas-Varagnolo-Vasserot \cite{DVV1} (see also Gerber-Hiss \cite{GH} for an earlier solution in a special case), that describes the ``weak cuspidals" and ``weak Harish-Chandra series" of simple modules in unipotent blocks using the $\sle$-crystal  
on level $2$ Fock spaces. 
 Weak Harish-Chandra theory for finite unitary groups restricts Harish-Chandra theory to the tower of groups $\GU_\iota(q)\subset \GU_{\iota+2}(q)\dots \subset \GU_{2m+\iota}(q)\subset \GU_{2m+2+\iota}(q)\subset\dots$ for $\iota\in\{0,1\}$ and thus to Levi subgroups that are of the same Dynkin type $^2A_n$ \cite{GHJ}. However, the group $\GL_e(q^2)$  also admits a (unique) unipotent cuspidal and can appear as a factor in a Levi subgroup. The information about branching with respect to such type $A$ Levi subgroups cannot be obtained using the $\sle$-crystal. To understand the actual Harish-Chandra branching rule of arbitrary simple modules in unipotent blocks, we have to study more than the $\sle$-crystal.

Theorem \ref{conj} says that for $\ell$ a sufficiently large unitary prime, the Harish-Chandra series of an arbitrary simple module in such a unipotent block is combinatorially encoded by the joint presence of the $\sle$- and $\slinf$-crystals on a  certain level $2$ Fock space. The idea is that this second simple directed graph, which we call the $\slinf$-crystal because each of its connected components is isomorphic to that graph, should keep track of the contribution to the cuspidal support from those Levi subgroups involving some number of copies of $\GL_e(q^2)$.  The $\slinf$-crystal has been proven to play this role in the analogous situation of describing the Harish-Chandra series of level $2$ cyclotomic rational Cherednik algebras, that is, rational Cherednik algebras of type $B$ Weyl groups 
\cite{ShanVasserot},\cite{Losev}. This crystal was first defined indirectly using the categorical action of an infinite-dimensional Heisenberg Lie algebra on cyclotomic Cherednik category $\cO$ \cite{ShanVasserot}. A combinatorial description crystallized through the work of several authors over the following years \cite{Losev}, \cite{Gerber1}, \cite{Gerber2}, \cite{GerberN}.  As a directed graph, each of its connected components is isomorphic to Young's lattice, the branching graph of the symmetric group in characteristic $0$ \cite{ShanVasserot},\cite{Losev}. 
 Each arrow in the crystal adds a vertical strip of $e$ boxes to a bipartition \cite{Losev},\cite{Gerber2}. The precise rule is subtle but similar in spirit to the rule for adding a good box in the $\sle$-crystal \cite{GerberN}. 
 
 In work by Gerber \cite{Gerber2}, the $\slinf$-crystal on level $2$ Fock spaces was called the Heisenberg crystal because of its relationship with the action of  the infinite-dimensional ``Heisenberg Lie algebra."\footnote{We prefer not to use the name Heisenberg crystal because (a) the $\sle$-crystal is closely related to the action of the so-called quantum Heisenberg category \cite{BSW1},\cite{BSW2}, and this tends to confuse outsiders who then conflate the two ``Heisenberg" actions, and (b) while Heisenberg definitely spent time thinking about uranium for the Nazis, he never thought about this cute graph structure on higher level Fock spaces.}
 We warn the reader that our preferred nomenclature of $\slinf$-crystal is a linguistic misdemeanor because we are only using ``crystal" to mean ``a simple directed graph carrying representation theoretic meaning which happens to be isomorphic as a graph to the crystal graph of a Lie algebra." We do not find it using any quantum group or limit as $q$ goes to $0$.
 
 Theorem \ref{conj} is inspired by the full Harish-Chandra branching rule for cyclotomic rational Cherednik algebras and is a direct translation of that rule to the setting of finite groups of Lie type. The Harish-Chandra branching rule for cyclotomic rational Cherednik algebras follows from the construction of two categorical actions on the associated tower of category $\cO$'s, an $\sle$-categorical action and a categorical action of the infinite-dimensional Heisenberg Lie algebra $\mathfrak{H}$ \cite{Shan}, \cite{ShanVasserot}. The proof of the weak Harish-Chandra branching rule for finite unitary groups follows from the construction of an $\sle$-categorical action on unipotent blocks of the finite unitary groups \cite{DVV1} and the classification of cuspidal unipotent modules follows from constructing some version of the action of $\mathfrak{H}$ as well \cite{DVV2}. The reason the full branching rule we give in Theorem \ref{conj} has not yet appeared in the literature is that there are apparently problems in extending this  approach using a categorical action of $\mathfrak{H}$ to the setting of finite unitary groups. We have attempted to complete the branching rule for unipotent representations of finite unitary groups in a naive and elementary way avoiding further use of Heisenberg algebras or categorical actions. We use algebraic arguments, such as can be found in \cite{DM} for computing decomposition matrices, that appeal to unitriangularity of the decomposition matrix, and we take advantage of the fact that the combinatorial structure of the $\slinf$-crystal on level $2$ Fock spaces is now well-understood and explicit. We need the classification of cuspidals \cite{DVV2}, then we use induction on the rank of the Levi factor of the form $\GU_{n'}(q)$ in the cuspidal support of a simple module. Certain paths in the $\slinf$-crystal on the twisted $2$-quotients of partitions are compatible with the dominance order on partitions, whence Theorem \ref{projectives thm}.\footnote{For in-depth studies of orders on $d$-partitions versus orders on partitions of $n$, arising from geometry of the Hilbert scheme of $n$ points in $\mathbb{C}^2$, see \cite{Prz}, \cite{Gordon}.}

 	We have only dealt with the case that $\ell>n/e$; for the solution to the Harish-Chandra branching problem for $\GL_n(q)$ without bounds on $\ell$, see \cite{DipperDu}.  When computing decomposition matrices which depend only on $e$ not on $\ell$ as in \cite{DM}, it is always assumed that $\ell$ is at least as big as $n$; in general $\ell$ should be much bigger than $n$, as the decomposition matrix of the Hecke algebra associated to the Frobenius-fixed points of the Weyl group of $G(\overline{F_q})$ is a submatrix of the decomposition matrix of $\mathbbm{k}\rG$, and the bounds for such a decomposition matrix to be independent of $\ell$ are known to be extremely large \cite{Williamson}.
 	
 	A similar result to Theorem \ref{conj} should hold for finite classical groups of types $B$ and $C$. In that case the weak Harish-Chandra branching rule is given by the $\sle$-crystal on a sum of level $2$ Fock spaces \cite{DVV2}, but there is a little more work to do as the analogue of \cite[Theorem 5.10]{DVV2} identifying cuspidals has not been checked. The validity of any of these results relies on unitriangularity of the decomposition matrix, which was conjectured by Geck for all types and very recently proven in \cite{BDT}.

 \textbf{Overview of the paper.} Section \ref{crystals} explains the combinatorics needed. Section \ref{unipunit} mostly reviews the representation theory part of the story, but also contains two new theorems  on cuspidals (Theorems \ref{cusp column} and \ref{blocks with cuspidals}). Section \ref{heart of the artichoke} states and proves the main theorem, Theorem \ref{conj}. The Appendix, Section \ref{pile of artichoke leaves with teeth scrapes}, consists mainly of explicit, direct case-by-case checks of the statement of Theorem \ref{conj} in small rank for $e=3$ and was written for an earlier version of this paper, in which the main theorem was stated as a conjecture. The Appendix also has a theorem about the ordinary unipotent characters occurring in the Harish-Chandra induction of the projective cover of the Steinberg module of a maximal type $A$ Levi subgroup (Theorem \ref{inducing the type A Steinberg}). We have kept this material from the older version of the paper in case the reader would like to see explicit computations and tables or how Theorem \ref{conj} can be checked by hand in small rank or for specific unipotent modules, like those in the head of the induced Steinberg module.

\section{Fock spaces and crystal graphs}\label{crystals}
\subsection{Charged bipartitions} A partition $\lambda=(\lambda_1,\lambda_2,\dots)$ is a nonincreasing, finite sequence of positive integers $\lambda_1\geq\lambda_2\geq\dots$. We write $|\lambda|=\lambda_1+\lambda_2+\dots$ and call $|\lambda|$ the size of $\lambda$. Let $\mathcal{P}$ denote the set of all partitions, including the empty partition $\emp$ and let $\mathcal{P}(n)$ denote the set of all partitions of size $n$. A bipartition $(\lambda^1,\lambda^2)$ is an ordered pair of partitions $\lambda^1,\lambda^2\in\mathcal{P}$. We denote by $\mathcal{P}^{(2)}$ the set of all bipartitions and by $\mathcal{P}^{(2)}(n)$ the set of all bipartitions of size $n$, meaning that $|\lambda_1|+|\lambda_2|=n$. 
We identify a partition $\lambda$ with its Young diagram, the upper-left-justified array of boxes in the plane whose first row has $\lambda_1$ boxes, the second row $\lambda_2$ boxes, and so on. If $b=(x,y)$ is a box of $\lambda$ in row $x$ and column $y$, then the content of $b$ is $y-x$. In upgrading the definition of content to bipartitions there is some flexibility, as we might want to fix some notion of distance between the two partitions $\lambda^1$ and $\lambda^2$; this is done by shifting the contents of the boxes in the two partitions using a ``charge." A charged bipartition is a pair $|\bla,\bs\rangle$ where $\bla=(\lambda^1,\lambda^2)\in\mathcal{P}^{(2)}$ and $\bs=(s_1,s_2)\in\Z^2$. If $b=(x,y,j)\in|\bla,\bs\rangle $ is a box in $\lambda^j$, the content of $b$ is defined to be $\mathrm{ct}(b):=y-x+s_j$.

 A charged bipartition $|\bla,\bs\rangle$ can be visualized as an abacus $\cA|\bla,\bs\rangle$ on two runners. Define the $\beta$-set of $(\lambda^j,s_j)$ to be the infinite set of integers 
$\beta^j:=\{\lambda^j_1+s_j,\lambda^j_2+s_j-1,\lambda^j_3+s_j-2,\dots,\lambda^j_m+s_j-m+1,\dots\}=:\{\beta^j_1,\beta^j_2,\beta^j_3,\dots,\beta^j_m,\dots\}$ where we extend $\lambda^j$ by an infinite sequence of $0$'s so that it has infinitely many parts. 
We then make a single-line abacus for $\beta^j$ by filling the integer number line with beads and spaces -- a bead in position $z$ if $z\in\beta^j$, a space otherwise. We make the abacus $\cA|\bla,\bs\rangle$ of $|\bla,\bs\rangle$ by stacking the two single-line abaci for our $\beta$-sets, putting the single-line abacus for $\beta^2$ on top and the single-line abacus for $\beta^1$ on the bottom. We refer to the top row as row $2$ and the bottom row as row $1$. The abacus is infinitely full of beads to the left, and infinitely empty with spaces to the right, and all mixture of beads and spaces occurs within a finite interval; when we picture abaci, we draw only a relevant finite region with the wealth of beads to the left and spaces to the right being understood. The charge $(s_1,s_2)$ can be found by swiping all the beads to the left and then reading off the position of the rightmost bead in each row. The charge $(s_1,s_2)$ is usually only defined up to adding $(c,c)$ for some $c\in\Z$. We write $(x,j)\in\cA|\bla,\bs\rangle$ if $x\in\beta^j$. Thus $(x,j)\in\cA|\bla,\bs\rangle$ means that the abacus has a bead in row $j$ and column $x$, and $(x,j)\notin\cA|\bla,\bs\rangle$ means that it has a space in that position.

\subsection{Fock spaces of level $2$ and the $\widehat{\mathfrak{sl}}_e$-crystal}

Fix $e\in\N_{\geq 2}$ and $\bs=(s_1,s_2)\in\Z^2$. The level $2$ Fock space $\mathcal{F}_{e,\bs}$ is the $\Q$-vector space with basis all charged bipartitions $|\bla,\bs\rangle$, $\bla\in\mathcal{P}^{(2)}$. 
 The basis elements $|\bla,\bs\rangle$ of $\mathcal{F}_{e,\bs}$ provide the vertices for a simple directed graph called the $\sle$-crystal. Recall that a simple graph is a graph with at most one edge between any two vertices, and a directed graph is a graph in which a direction is assigned to each edge; we call the edges of a directed graph arrows; a source vertex of a directed graph is a vertex with no incoming arrows. 
 There is an edge $|\bla,\bs\rangle$ to $|\bmu,\bs\rangle$ in the $\sle$-crystal if and only if $|\bmu|=|\bla|+1$ and $\bmu$ is obtained from $\bla$ by adding a ``good" box $b$ of content $i$ for some $i\in\Z/e\Z$ ($b$ is called a good addable $i$-box) \cite{FLOTW}. We then write $\bmu=\tilde{f}_i(\bla)$, whereas if $\bla$ has no good addable $i$-box then we write $\tilde{f}_i(\bla)=0$. Conversely, if $\tilde{f}_i(\bla)=\bmu$ then $\bla=\tilde{e}_i(\bmu)$ where $\tilde{e}_i$ is the operator that removes a ``good" box of content $i$ (or is $0$ if such a box does not exist). The box removed is then called a good removable $i$-box. The identification of a good addable/removable $i$-box depends not only on the charge $\bs$ but also on a choice of a total order on the set of addable and removal boxes of $\bla$: $b>b'$ if $\mathrm{ct}(b)>\mathrm{ct}(b')$ or $\mathrm{ct}(b)=\mathrm{ct}(b')$ and $b\in\lambda^1$, $b'\in\lambda^2$.
 For each $i\in\Z/e\Z$ there is at most one good addable/removable $i$-box in a given charged bipartition $|\bla,\bs\rangle$. 
 The  $\sle$-crystal partitions the $\sle$-crystal graph on $\mathcal{F}_{e,\bs}$ into connected components, each of which emanates from a unique source vertex.

\subsection{Source vertices of the $\sle$-crystal and $e$-periods}
There is a nice combinatorial description of the source vertices of the $\sle$-crystal:

\begin{theorem}\label{tot per}\cite[Thm 5.9]{JL1} 
A charged bipartition $|\bla,\bs\rangle$ is a source vertex of the $\sle$-crystal on $\mathcal{F}_{e,\bs}$ if and only if its abacus $\cA|\bla,\bs\rangle$ is totally $e$-periodic.
\end{theorem}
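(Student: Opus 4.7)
The plan is to translate both conditions into the combinatorics of the abacus and then match them via the bracketing rule that defines the crystal operators $\tilde{e}_i$. Recall that $\tilde{e}_i$ is computed by listing all addable and removable $i$-boxes of $|\bla,\bs\rangle$ in the total order specified in the text, encoding them as a word in the alphabet $\{a,r\}$, cancelling adjacent $ar$ pairs until a reduced word of the form $a^p r^q$ remains, and then removing the box corresponding to the leftmost surviving $r$ (or returning $0$ if $q=0$). Consequently, $|\bla,\bs\rangle$ is a source vertex if and only if for every $i \in \Z/e\Z$ the reduced $i$-word contains no $r$, i.e.\ every removable $i$-box is paired with a strictly preceding addable $i$-box. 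On the abacus, a removable box corresponds to a bead $(x,j)\in\cA$ with $(x+1,j)\notin\cA$, and an addable box corresponds to a space $(x,j)\notin\cA$ with $(x-1,j)\in\cA$; the residue of the associated content mod $e$ is determined by $x$ and $s_j$ in the standard way.

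For the implication ($\Leftarrow$), I would proceed by induction on the number of $e$-periods in $\cA|\bla,\bs\rangle$. The strategy is to take the rightmost $e$-period and show that, for every residue $i \in \Z/e\Z$, any $r$ that the period contributes to the $i$-word is cancelled by an $a$ contributed either by the same period or by one appearing strictly earlier in the ordering; the point is that the $e$ beads in a period produce, together with the spaces their positions block, a balanced local pattern of $a$'s and $r$'s that already cancels internally. Once this pairing is verified for each $i$, the remainder of the abacus after removing the period is still totally $e$-periodic with one fewer period, and its bracketing analysis for each $i$ is a sub-word of the original, so the induction closes. For the converse I would argue contrapositively: assuming $\cA|\bla,\bs\rangle$ is not totally $e$-periodic, the peeling algorithm that tries to pull off successive rightmost $e$-periods must fail at some stage. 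I would show that the first failure necessarily produces an unmatched $r$ in the bracketing word for some residue $i$, which is then a good removable $i$-box, contradicting the source-vertex assumption.

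The main technical obstacle is the bookkeeping forced by the tie-breaking rule $b > b'$ when $b \in \lambda^1$ and $b' \in \lambda^2$ have equal content, because this rule controls how the two rows of the abacus interleave inside a single $e$-period. Carrying out the induction requires tracking, for each residue $i$, exactly which boxes in row $1$ are ordered before those at the same position in row $2$, and verifying that the resulting bracketing word is compatible with the period decomposition. An alternative route worth considering is the tensor-product description of the level-$2$ $\sle$-crystal in terms of two level-$1$ Fock spaces with charges $s_1, s_2$. At level $1$ the source vertices of the crystal are exactly the $e$-cores, whose single-row abaci are trivially totally $e$-periodic; applying Kashiwara's tensor product rule would then reduce the problem to a comparison between the bracketing on the tensor product and the $e$-period decomposition of the two-row abacus, which may streamline the interleaving bookkeeping and isolate the main combinatorial input as the level-$1$ statement.
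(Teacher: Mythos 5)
First, a point of reference: the paper does not prove this statement at all — it is quoted verbatim from Jacon--Lecouvey \cite[Thm 5.9]{JL1} — so your sketch cannot be measured against an in-paper argument, only against whether it would actually close. It would not, for two reasons. The first is a concrete error in your abacus dictionary: the two configurations you describe, ``a bead $(x,j)\in\cA$ with $(x+1,j)\notin\cA$'' and ``a space $(x,j)\notin\cA$ with $(x-1,j)\in\cA$,'' are the \emph{same} local pattern (a bead immediately followed by a space), and it encodes an \emph{addable} box of charged content $x$. A removable box corresponds to the opposite pattern, a space immediately followed by a bead, i.e.\ a bead at $(x,j)$ with $(x-1,j)\notin\cA$, of content $x-1$. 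Since the whole proof is bookkeeping on exactly these patterns, this must be fixed before anything else can be checked.

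The second and more serious problem is the step ``the remainder of the abacus after removing the period is still totally $e$-periodic \dots and its bracketing analysis for each $i$ is a sub-word of the original,'' on which your induction rests. This is false: deleting the $e$ beads of $P_1$ changes the local neighbourhoods of the surviving beads, so the $i$-words of $\cA|\bla,\bs\rangle\setminus P_1$ are not subwords of those of $\cA|\bla,\bs\rangle$. Concretely, a surviving bead at $(x,j)$ whose right neighbour $(x+1,j)$ belonged to $P_1$ acquires a new addable box, and one whose left neighbour $(x-1,j)$ belonged to $P_1$ acquires a new removable box; letters both appear and disappear, and their contents shift. Controlling precisely how the reduced words transform under peeling off a period is the actual content of the Jacon--Lecouvey proof (and is also where the tie-breaking rule between $\lambda^1$ and $\lambda^2$ bites), not a routine verification. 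Likewise, the claim that a period's letters ``cancel internally'' and the converse claim that the first failure of the peeling algorithm yields an uncancelled $r$ are both plausible but unproven, and the latter is the harder direction. Your alternative via the tensor product of two level-$1$ crystals is a legitimate route, but note that Kashiwara's rule does not reduce the source-vertex condition to both factors being $e$-cores (indeed $|1^e.\emp,\bs\rangle$ can be totally $e$-periodic while $1^e$ is not an $e$-core), so the $\varepsilon_i\le\varphi_i$ inequalities still have to be matched against the period decomposition; that comparison is essentially the same bookkeeping you were trying to avoid.
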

\noindent We need to explain what it means for an abacus to be ``totally $e$-periodic" \cite[Definition 5.4]{JL1}. Informally, this means that the beads in the abacus of $|\bla,\bs\rangle$ can be partitioned into chains of length $e$ called $e$-periods which have the following shape: the rightmost $u$ beads are in the top row, the leftmost $e-u$ beads are in the bottom row for some $u\in\{0,\dots,e\}$, and the $\beta$-numbers corresponding to the beads fill out an interval of length $e$. For example, when $e=5$ these $e$-periods can have the following shapes, which we draw with red lines connecting the beads in the $e$-period:
 $$
  \TikZ{[scale=.5]
 	\draw
 	(1,1)node[fill,circle,inner sep=3pt]{}
 	(2,1)node[fill,circle,inner sep=3pt]{}
 	(3,1)node[fill,circle,inner sep=3pt]{}
 	(4,1)node[fill,circle,inner sep=3pt]{}
 	(5,1)node[fill,circle,inner sep=3pt]{}
 	;
 	\draw[very thick,red] plot [smooth,tension=.1] coordinates{(5,1)(1,1)};
}
 \quad ,\quad 
 \TikZ{[scale=.5]
 	\draw
 	(1,0)node[fill,circle,inner sep=3pt]{}
 	(2,1)node[fill,circle,inner sep=3pt]{}
 	(3,1)node[fill,circle,inner sep=3pt]{}
 	(4,1)node[fill,circle,inner sep=3pt]{}
 	(5,1)node[fill,circle,inner sep=3pt]{}
 	;
 \draw[very thick,red] plot [smooth,tension=.1] coordinates{(5,1)(2,1)(1,0)};
}
\quad ,\quad 
\TikZ{[scale=.5]
	\draw
	(1,0)node[fill,circle,inner sep=3pt]{}
	(2,0)node[fill,circle,inner sep=3pt]{}
	(3,1)node[fill,circle,inner sep=3pt]{}
	(4,1)node[fill,circle,inner sep=3pt]{}
	(5,1)node[fill,circle,inner sep=3pt]{}
	;
\draw[very thick,red] plot [smooth,tension=.1] coordinates{(5,1)(3,1)(2,0)(1,0)};
}
\quad ,\quad 
\TikZ{[scale=.5]
	\draw
	(1,0)node[fill,circle,inner sep=3pt]{}
	(2,0)node[fill,circle,inner sep=3pt]{}
	(3,0)node[fill,circle,inner sep=3pt]{}
	(4,1)node[fill,circle,inner sep=3pt]{}
	(5,1)node[fill,circle,inner sep=3pt]{}
	;
\draw[very thick,red] plot [smooth,tension=.1] coordinates{(5,1)(4,1)(3,0)(1,0)};
}
\quad ,\quad 
\TikZ{[scale=.5]
	\draw
	(1,0)node[fill,circle,inner sep=3pt]{}
	(2,0)node[fill,circle,inner sep=3pt]{}
	(3,0)node[fill,circle,inner sep=3pt]{}
	(4,0)node[fill,circle,inner sep=3pt]{}
	(5,1)node[fill,circle,inner sep=3pt]{}
	;
\draw[very thick,red] plot [smooth,tension=.1] coordinates{(5,1)(4,0)(1,0)};
}
$$
where the completely horizontal shape can go in either row $1$ or row $2$ of the abacus. An example of a totally $5$-periodic abacus corresponding to the charged bipartition $|6^32^4.4^23^61^4,(-2,2)\rangle$ is:

$$\TikZ{[scale=.5]
	\draw
		(-12,0)node[fill,circle,inner sep=3pt]{}
	(-11,0)node[fill,circle,inner sep=3pt]{}
	(-10,0)node[fill,circle,inner sep=3pt]{}
	(-9,0)node[fill,circle,inner sep=3pt]{}
	(-8,0)node[fill,circle,inner sep=3pt]{}
	(-12,1)node[fill,circle,inner sep=3pt]{}
	(-11,1)node[fill,circle,inner sep=3pt]{}
	(-10,1)node[fill,circle,inner sep=3pt]{}
	(-9,1)node[fill,circle,inner sep=3pt]{}
	(-8,1)node[fill,circle,inner sep=3pt]{}
	(-7,0)node[fill,circle,inner sep=3pt]{}
	(-6,0)node[fill,circle,inner sep=3pt]{}
	(-5,0)node[fill,circle,inner sep=3pt]{}
	(-4,0)node[fill,circle,inner sep=3pt]{}
	(-3,0)node[fill,circle,inner sep=3pt]{}
	(-7,1)node[fill,circle,inner sep=3pt]{}
	(-6,1)node[fill,circle,inner sep=3pt]{}
	(-5,1)node[fill,circle,inner sep=3pt]{}
	(-4,1)node[fill,circle,inner sep=3pt]{}
	(-3,1)node[fill,circle,inner sep=3pt]{}
		(1,0)node[fill,circle,inner sep=3pt]{}
	(2,0)node[fill,circle,inner sep=3pt]{}
	(3,0)node[fill,circle,inner sep=3pt]{}
	(4,0)node[fill,circle,inner sep=3pt]{}
	(5,1)node[fill,circle,inner sep=3pt]{}
	(6,1)node[fill,circle,inner sep=3pt]{}
	(7,1)node[fill,circle,inner sep=3pt]{}
	(8,1)node[fill,circle,inner sep=3pt]{}
	(9,1)node[fill,circle,inner sep=3pt]{}
	(10,1)node[fill,circle,inner sep=3pt]{}
	(-2,1)node[fill,circle,inner sep=.5pt]{}
	(-2,0)node[fill,circle,inner sep=3pt]{}
	(-1,0)node[fill,circle,inner sep=.5pt]{}
	(0,0)node[fill,circle,inner sep=.5pt]{}
	(-1,1)node[fill,circle,inner sep=3pt]{}
	(0,1)node[fill,circle,inner sep=3pt]{}
	(1,1)node[fill,circle,inner sep=3pt]{}
	(2,1)node[fill,circle,inner sep=3pt]{}
	(3,1)node[fill,circle,inner sep=.5pt]{}
	(4,1)node[fill,circle,inner sep=.5pt]{}
	(5,0)node[fill,circle,inner sep=.5pt]{}
	(6,0)node[fill,circle,inner sep=.5pt]{}
	(7,0)node[fill,circle,inner sep=.5pt]{}
	(8,0)node[fill,circle,inner sep=.5pt]{}
	(9,0)node[fill,circle,inner sep=3pt]{}
	(10,0)node[fill,circle,inner sep=3pt]{}
	(11,1)node[fill,circle,inner sep=.5pt]{}
	(11,0)node[fill,circle,inner sep=3pt]{}
	(12,1)node[fill,circle,inner sep=3pt]{}
	(13,1)node[fill,circle,inner sep=3pt]{}
	(12,0)node[fill,circle,inner sep=.5pt]{}
	(13,0)node[fill,circle,inner sep=.5pt]{}
	(14,0)node[fill,circle,inner sep=.5pt]{}
	(14,1)node[fill,circle,inner sep=.5pt]{}
	(15,0)node[fill,circle,inner sep=.5pt]{}
	(15,1)node[fill,circle,inner sep=.5pt]{}
	;
	\draw[very thick,red] plot [smooth,tension=.1] coordinates{(13,1)(12,1)(11,0)(9,0)};
	\draw[very thick,red] plot [smooth,tension=.1] coordinates{(10,1)(6,1)};
	\draw[very thick,red] plot [smooth,tension=.1] coordinates{(5,1)(4,0)(1,0)};
	\draw[very thick,red] plot [smooth,tension=.1] coordinates{(2,1)(-1,1)(-2,0)};
	\draw[very thick,red] plot [smooth,tension=.1] coordinates{(-3,0)(-7,0)};
	\draw[very thick,red] plot [smooth,tension=.1] coordinates{(-3,1)(-7,1)};
	\draw[very thick,red] plot [smooth,tension=.1] coordinates{(-8,0)(-12,0)};
	\draw[very thick,red] plot [smooth,tension=.1] coordinates{(-8,1)(-12,1)};
}$$

 \begin{remark}
 The device of $e$-periods is very close to the device of yokes that appears in \cite{FodaWelsh}; however the two are distinct, as the $\beta$-numbers appearing in a yoke do not have to be consecutive, see \cite[Figure 3]{FodaWelsh}. In \cite{GerberN} we followed \cite{JL1} where $e$-periods and totally $e$-periodic abaci were introduced and \cite{Gerber2} where they were interpreted in terms of vertical strips. The cylindric multipartitions studied using yokes in \cite{FodaWelsh} and the totally $e$-periodic abaci as above turn out to be related to each other by level-rank duality \cite{GerberN+1}.
 \end{remark}

More formally, following \cite[Def 2.2]{JL1} we define an $e$-period as follows.
\begin{definition}\label{JL e-period def} The abacus $\cA|\bla,\bs\rangle$ of a charged bipartition $|\bla,\bs\rangle$ is said to have an $e$-period if it contains a sequence of beads $(x,j_1),(x-1,j_2),(x-2,j_3)\dots,(x-e+1,j_e)$ such that:
\begin{itemize}
	\item $x$ is the largest $\beta$-number in $\cA|\bla,\bs\rangle$;
	\item if $x-i+1\in\beta^1$ then $j_i=1$ for all $i=1,\dots,e-1$;
	\item if $j_i=1$ then $j_{i+1}=1$ for all $i=1,\dots,e-1$.
\end{itemize}
\end{definition}
The $e$-period of $\cA|\bla,\bs\rangle$, if it exists, represents the largest  vertical strip of length $e$ that can be added along the border of the bipartition $\bla$ in such a way that the charged contents of the added boxes form an interval of length $e$ and the result of adding the boxes is again a bipartition. By ``largest" we mean again, largest with respect to the order that $b>b'$ if $\mathrm{ct}(b)>\mathrm{ct}(b')$ or $\mathrm{ct}(b)=\mathrm{ct}(b')$ and $b\in\lambda^1$, $b'\in\lambda^2$, extended to an order on addable vertical strips in the obvious way. Technically, we should use the extended Young diagram of \cite{Jacon} to make this precise, which translates into a total order on the beads of the abacus: if $(\beta,j),\;(\beta',j')\in\cA|\bla,\bs\rangle$ are distinct beads in rows $j,j'$ and columns $\beta,\beta'$ respectively, then $(\beta,j)>(\beta',j')$ if $\beta>\beta'$ or $\beta=\beta'$ and $j=1$, $j'=2$.

 Define the first $e$-period $P_1$ of $\cA|\bla,\bs\rangle$ to be the $e$-period of $\cA|\bla,\bs\rangle$, if it exists. By induction then define the $k$'th $e$-period $P_k$ of $\cA|\bla,\bs\rangle$ to be the $e$-period of $\cA|\bla,\bs\rangle\setminus(P_1\cup\dots\cup P_{k-1})$ if it exists (thinking of each $e$-period $P_k$ as a subset of the beads of $\cA|\bla,\bs\rangle$ itself). An abacus $\cA|\bla,\bs\rangle$ is called totally $e$-periodic if there exists an $N\in\N$ such that $P_k$ is defined for all $k\leq N$ and removing $P_1,\dots, P_N$ from $\cA|\bla,\bs\rangle$ yields the abacus of the empty bipartition with respect to some charge \cite[Def 5.4]{JL1}; equivalently, if the $k$'th period of $\cA|\bla,\bs\rangle$ exists for any $k\in\N$.  The $e$-periods of a totally $e$-periodic abacus are unique and partition the beads of the abacus.
\subsection{The $\mathfrak{sl}_\infty$-crystal on level $2$ Fock spaces} We focus on the situation that $\cA|\bla,\bs\rangle$ is a totally $e$-periodic abacus of some charged bipartition $|\bla,\bs\rangle$. The description of the edges in the $\slinf$-crystal on $\cF_{e,\bs}$ is a bit more complicated in full generality and we refer to \cite{GerberN} for the general case. The idea however is similar to the totally $e$-periodic case. Moreover, the $\slinf$- and $\sle$-crystals on $\cF_{e,\bs}$ commute, so it is always possible go to a source vertex in the $\sle$-crystal by removing good boxes, do the computations in the $\slinf$-crystal, then retrace one's steps in the $\sle$-crystal by adding good boxes of the same residue sequence as were removed (in reverse order of course).

\begin{definition}\label{slinf tot per}\cite[Corollary 4.21]{GerberN} Let $\cF_{e,\bs}^{\mathrm{per}}$ denote the sub-$\Q$-vector space of $\cF_{e,\bs}$ spanned by those charged bipartitions $|\bla,\bs\rangle$, $\bla\in\cP^{(2)}$, whose abaci are totally $e$-periodic.
	The $\slinf$-crystal on $\cF_{e,\bs}^{\mathrm{per}}$ is the simple directed graph whose vertices consist of all $\bla\in\cP^{(2)}$ such that $\cA|\bla,\bs\rangle$ is totally $e$-periodic, and whose arrows are given by $\bla\rightarrow\bmu$ if and only if $\bmu$ is obtained from $\bla$ by shifting the $k$'th $e$-period $P_k$ of $\cA|\bla,\bs\rangle$ one step to the right and the right shift of $P_k$ coincides with the $k$'th $e$-period $P_k'$ of $\cA|\bmu,\bs\rangle$.
\end{definition}

\begin{example}
	Consider the charged bipartition $|\bla,\bs\rangle=|2^4.2^2,(-2,0)\rangle$ and take $e=3$. Its abacus is totally $e$-periodic, and thus it is a source vertex of the $\sle$-crystal by Theorem \ref{tot per}. We draw its abacus with its first five $3$-periods below, and the two edges emanating from it in the $\slinf$-crystal. 
		\[
	\TikZ{[scale=.5]
		\draw
		(-4,0)node[fill,circle,inner sep=3pt]{}
		(-3,0)node[fill,circle,inner sep=3pt]{}
		(-2,0)node[fill,circle,inner sep=3pt]{}
		(-1,0)node[fill,circle,inner sep=3pt]{}
		(0,0)node[fill,circle,inner sep=3pt]{}
		(1,0)node[fill,circle,inner sep=3pt]{}
		(2,0)node[fill,circle,inner sep=3pt]{}
		(-4,-1)node[fill,circle,inner sep=3pt]{}
		(-3,-1)node[fill,circle,inner sep=3pt]{}
		(-2,-1)node[fill,circle,inner sep=3pt]{}
		(1,-1)node[fill,circle,inner sep=3pt]{}
		(2,-1)node[fill,circle,inner sep=3pt]{}
		(3,-1)node[fill,circle,inner sep=3pt]{}
		(4,-1)node[fill,circle,inner sep=3pt]{}
		(5,0)node[fill,circle,inner sep=3pt]{}
		(6,0)node[fill,circle,inner sep=3pt]{}
		(6,-1)node[fill,circle,inner sep=.5pt]{}
		(5,-1)node[fill,circle,inner sep=.5pt]{}
		(4,0)node[fill,circle,inner sep=.5pt]{}
		(3,0)node[fill,circle,inner sep=.5pt]{}
		(0,-1)node[fill,circle,inner sep=.5pt]{}
		(-1,-1)node[fill,circle,inner sep=.5pt]{}
		;
		\draw[very thick,red] plot [smooth,tension=.1] coordinates{(6,0)(5,0)(4,-1)}; 
		\draw[very thick,red] plot [smooth,tension=.1] coordinates{(3,-1)(1,-1)}; 
		\draw[very thick,red] plot [smooth,tension=.1] coordinates{(2,0)(0,0)};
		\draw[very thick,red] plot [smooth,tension=.1] coordinates{(-1,0)(-2,-1)(-3,-1)};
		\draw[very thick,red] plot [smooth,tension=.1] coordinates{(-2,0)(-3,0)(-4,-1)};
		\draw[->] plot [smooth] coordinates{(7,0)(9,.5)};
			\draw
		(10,1)node[fill,circle,inner sep=3pt]{}
		(11,1)node[fill,circle,inner sep=3pt]{}
		(12,1)node[fill,circle,inner sep=3pt]{}
		(13,1)node[fill,circle,inner sep=3pt]{}
		(14,1)node[fill,circle,inner sep=3pt]{}
		(15,1)node[fill,circle,inner sep=3pt]{}
		(16,1)node[fill,circle,inner sep=3pt]{}
		(10,0)node[fill,circle,inner sep=3pt]{}
		(11,0)node[fill,circle,inner sep=3pt]{}
		(12,0)node[fill,circle,inner sep=3pt]{}
		(15,0)node[fill,circle,inner sep=3pt]{}
		(16,0)node[fill,circle,inner sep=3pt]{}
		(17,0)node[fill,circle,inner sep=3pt]{}
		(19,0)node[fill,circle,inner sep=3pt]{}
		(20,1)node[fill,circle,inner sep=3pt]{}
		(21,1)node[fill,circle,inner sep=3pt]{}
		(13,0)node[fill,circle,inner sep=.5pt]{}
		(14,0)node[fill,circle,inner sep=.5pt]{}
		(17,1)node[fill,circle,inner sep=.5pt]{}
		(18,1)node[fill,circle,inner sep=.5pt]{}
		(19,1)node[fill,circle,inner sep=.5pt]{}
		(22,1)node[fill,circle,inner sep=.5pt]{}
		(18,0)node[fill,circle,inner sep=.5pt]{}
		(20,0)node[fill,circle,inner sep=.5pt]{}
		(21,0)node[fill,circle,inner sep=.5pt]{}
		(22,0)node[fill,circle,inner sep=.5pt]{};
		\draw[very thick,red] plot [smooth,tension=.1] coordinates{(21,1)(20,1)(19,0)}; 
		\draw[very thick,red] plot [smooth,tension=.1] coordinates{(17,0)(15,0)}; 
		\draw[very thick,red] plot [smooth,tension=.1] coordinates{(16,1)(14,1)};
		\draw[very thick,red] plot [smooth,tension=.1] coordinates{(13,1)(12,0)(11,0)};
		\draw[very thick,red] plot [smooth,tension=.1] coordinates{(12,1)(11,1)(10,0)};
		\draw[->] plot [smooth] coordinates{(7,-1)(9,-2.5)};
			\draw
	(10,-2)node[fill,circle,inner sep=3pt]{}
(11,-2)node[fill,circle,inner sep=3pt]{}
(12,-2)node[fill,circle,inner sep=3pt]{}
(13,-2)node[fill,circle,inner sep=3pt]{}
(17,-2)node[fill,circle,inner sep=3pt]{}
		(15,-2)node[fill,circle,inner sep=3pt]{}
		(16,-2)node[fill,circle,inner sep=3pt]{}
		(10,-3)node[fill,circle,inner sep=3pt]{}
		(11,-3)node[fill,circle,inner sep=3pt]{}
		(12,-3)node[fill,circle,inner sep=3pt]{}
		(15,-3)node[fill,circle,inner sep=3pt]{}
		(16,-3)node[fill,circle,inner sep=3pt]{}
		(17,-3)node[fill,circle,inner sep=3pt]{}
		(18,-3)node[fill,circle,inner sep=3pt]{}
		(19,-2)node[fill,circle,inner sep=3pt]{}
		(20,-2)node[fill,circle,inner sep=3pt]{}
		(13,-3)node[fill,circle,inner sep=.5pt]{}
		(14,-3)node[fill,circle,inner sep=.5pt]{}
		(14,-2)node[fill,circle,inner sep=.5pt]{}
		(18,-2)node[fill,circle,inner sep=.5pt]{}
		(21,-2)node[fill,circle,inner sep=.5pt]{}
		(22,-2)node[fill,circle,inner sep=.5pt]{}
		(19,-3)node[fill,circle,inner sep=.5pt]{}
		(20,-3)node[fill,circle,inner sep=.5pt]{}
		(21,-3)node[fill,circle,inner sep=.5pt]{}
		(22,-3)node[fill,circle,inner sep=.5pt]{};
		\draw[very thick,red] plot [smooth,tension=.1] coordinates{(20,-2)(19,-2)(18,-3)}; 
		\draw[very thick,red] plot [smooth,tension=.1] coordinates{(17,-3)(15,-3)}; 
		\draw[very thick,red] plot [smooth,tension=.1] coordinates{(17,-2)(15,-2)};
		\draw[very thick,red] plot [smooth,tension=.1] coordinates{(13,-2)(12,-3)(11,-3)};
		\draw[very thick,red] plot [smooth,tension=.1] coordinates{(12,-2)(11,-2)(10,-3)};
	}
	\]

\end{example}
\begin{example}
	Though the edges are described by moving some $P_k$'s to the right, it is not always the case that moving a $P_k$ to the right, even when it's physically possible to do so, describes an edge in the graph. It must additionally hold that the shift of $P_k$ coincides with $k$'th $e$-period $P_k'$ of the resulting abacus. For example, take $P_3$ in the bottom right abacus in the previous example. It can physically slide once to the right. However, doing so, the three beads that move no longer all belong to the same $3$-period $P_3'$ in the resulting abacus, which we draw with its $3$-periods:
	\[\TikZ{[scale=.5]	\draw
	(10,-2)node[fill,circle,inner sep=3pt]{}
	(11,-2)node[fill,circle,inner sep=3pt]{}
	(12,-2)node[fill,circle,inner sep=3pt]{}
	(13,-2)node[fill,circle,inner sep=3pt]{}
	(18,-2)node[fill,circle,inner sep=3pt]{}
	(16,-2)node[fill,circle,inner sep=3pt]{}
	(17,-2)node[fill,circle,inner sep=3pt]{}
	(10,-3)node[fill,circle,inner sep=3pt]{}
	(11,-3)node[fill,circle,inner sep=3pt]{}
	(12,-3)node[fill,circle,inner sep=3pt]{}
	(15,-3)node[fill,circle,inner sep=3pt]{}
	(16,-3)node[fill,circle,inner sep=3pt]{}
	(17,-3)node[fill,circle,inner sep=3pt]{}
	(18,-3)node[fill,circle,inner sep=3pt]{}
	(19,-2)node[fill,circle,inner sep=3pt]{}
	(20,-2)node[fill,circle,inner sep=3pt]{}
	(13,-3)node[fill,circle,inner sep=.5pt]{}
	(14,-3)node[fill,circle,inner sep=.5pt]{}
	(14,-2)node[fill,circle,inner sep=.5pt]{}
	(15,-2)node[fill,circle,inner sep=.5pt]{}
	(21,-2)node[fill,circle,inner sep=.5pt]{}
	(22,-2)node[fill,circle,inner sep=.5pt]{}
	(19,-3)node[fill,circle,inner sep=.5pt]{}
	(20,-3)node[fill,circle,inner sep=.5pt]{}
	(21,-3)node[fill,circle,inner sep=.5pt]{}
	(22,-3)node[fill,circle,inner sep=.5pt]{};
	\draw[very thick,red] plot [smooth,tension=.1] coordinates{(20,-2)(19,-2)(18,-3)}; 
	\draw[very thick,red] plot [smooth,tension=.1] coordinates{(18,-2)(17,-3)(16,-3)}; 
	\draw[very thick,red] plot [smooth,tension=.1] coordinates{(17,-2)(16,-2)(15,-3)};
	\draw[very thick,red] plot [smooth,tension=.1] coordinates{(13,-2)(12,-3)(11,-3)};
	\draw[very thick,red] plot [smooth,tension=.1] coordinates{(12,-2)(11,-2)(10,-3)};
}
\]
	Thus sliding $P_3$ to the right did not describe an edge in the $\slinf$-crystal.
\end{example}

The rule for the edges in the $\slinf$-crystal on abaci which are not totally $e$-periodic involves generalizing the notion of $e$-period into two separate cases, both of which draw from the same repertoire of shapes as $e$-periods: there are $e$-chains called fore periods that are possibly allowed to slide right and satisfy a maximality condition with respect to the total order on beads of the abacus described above; and there are $e$-chains called aft periods that are possibly allowed to slide to the left, satisfying a minimality condition with respect to the clump of beads between adjacent fore periods in which they are situated. The modification of the edge rule in Definition \ref{slinf tot per} consists in specifying that the right shift of the $k$'th fore period should be the $k$'th aft period of the resulting abacus. We refer to \cite[Theorem 4.15]{GerberN}, and simply illustrate the rule here with an example.

\begin{example}
	An edge in the $\slinf$-crystal on $\cF_{e,\bs}$ for $e=5$ and $\bs=(0,4)$, with fore periods in red (solid) and aft  periods in green (dashed). 
		\[
	\TikZ{[scale=.5]
		\draw
		(-11,0)node[fill,circle,inner sep=3pt]{}
		(-10,0)node[fill,circle,inner sep=3pt]{}
		(-9,0)node[fill,circle,inner sep=3pt]{}
		(-11,-1)node[fill,circle,inner sep=3pt]{}
		(-10,-1)node[fill,circle,inner sep=3pt]{}
		(-9,-1)node[fill,circle,inner sep=3pt]{}
		(-8,-1)node[fill,circle,inner sep=3pt]{}
		(-7,-1)node[fill,circle,inner sep=3pt]{}
				(-8,0)node[fill,circle,inner sep=3pt]{}
		(-7,0)node[fill,circle,inner sep=3pt]{}
		(-6,0)node[fill,circle,inner sep=.5pt]{}
		(-6,-1)node[fill,circle,inner sep=.5pt]{}
		(-5,-1)node[fill,circle,inner sep=.5pt]{}
		(-2,-1)node[fill,circle,inner sep=.5pt]{}
		(-1,-1)node[fill,circle,inner sep=.5pt]{}
		(0,-1)node[fill,circle,inner sep=.5pt]{}
		(1,0)node[fill,circle,inner sep=.5pt]{}
		(3,0)node[fill,circle,inner sep=.5pt]{}
		(4,0)node[fill,circle,inner sep=.5pt]{}
		(9,0)node[fill,circle,inner sep=.5pt]{}
		(8,0)node[fill,circle,inner sep=.5pt]{}
		(5,-1)node[fill,circle,inner sep=.5pt]{}
		(6,-1)node[fill,circle,inner sep=.5pt]{}
		(7,-1)node[fill,circle,inner sep=.5pt]{}
		(8,-1)node[fill,circle,inner sep=.5pt]{}
		(9,-1)node[fill,circle,inner sep=.5pt]{}
		(-5,0)node[fill,circle,inner sep=3pt]{}
		(-4,0)node[fill,circle,inner sep=3pt]{}
		(-3,0)node[fill,circle,inner sep=3pt]{}
		(-2,0)node[fill,circle,inner sep=3pt]{}
		(-1,0)node[fill,circle,inner sep=3pt]{}
		(0,0)node[fill,circle,inner sep=3pt]{}
		(2,0)node[fill,circle,inner sep=3pt]{}
		(-4,-1)node[fill,circle,inner sep=3pt]{}
		(-3,-1)node[fill,circle,inner sep=3pt]{}
		(1,-1)node[fill,circle,inner sep=3pt]{}
		(2,-1)node[fill,circle,inner sep=3pt]{}
		(3,-1)node[fill,circle,inner sep=3pt]{}
		(4,-1)node[fill,circle,inner sep=3pt]{}
		(5,0)node[fill,circle,inner sep=3pt]{}
		(6,0)node[fill,circle,inner sep=3pt]{}
		(7,0)node[fill,circle,inner sep=3pt]{};
		\draw[very thick,red] plot [smooth,tension=.1] coordinates{(7,0)(5,0)(4,-1)(3,-1)}; 
		\draw[very thick, green, dashed] plot [smooth,tension=.1] coordinates {(5,.1)(4,-.9)(1,-.9)};
		\draw[very thick,red] plot [smooth,tension=.1] coordinates{(0,0)(-2,0)(-3,-1)(-4,-1)}; 
		\draw[very thick,red] plot [smooth,tension=.1] coordinates{(-7,0)(-11,0)};
		\draw[very thick,red] plot [smooth,tension=.1] coordinates{(-7,-1)(-11,-1)};
				\draw[very thick,green,dashed] plot [smooth,tension=.1] coordinates{(-7,.1)(-11,.1)};
		\draw[very thick,green,dashed] plot [smooth,tension=.1] coordinates{(-7,-.9)(-11,-.9)};
		\draw[very thick,green, dashed] plot [smooth,tension=.1] coordinates{(-1,.1)(-5,.1)};
		\draw[->] plot [smooth] coordinates{(-1,-1.7)(1,-3.3)};
		\draw(-6,-4)node[fill,circle,inner sep=3pt]{}
		(-5,-4)node[fill,circle,inner sep=3pt]{}
		(-4,-4)node[fill,circle,inner sep=3pt]{}
		(-3,-4)node[fill,circle,inner sep=3pt]{}
		(-2,-4)node[fill,circle,inner sep=3pt]{}
		(-6,-5)node[fill,circle,inner sep=3pt]{}
		(-5,-5)node[fill,circle,inner sep=3pt]{}
		(-4,-5)node[fill,circle,inner sep=3pt]{}
		(-3,-5)node[fill,circle,inner sep=3pt]{}
		(-2,-5)node[fill,circle,inner sep=3pt]{}
		(-1,-4)node[fill,circle,inner sep=.5pt]{}
		(-1,-5)node[fill,circle,inner sep=.5pt]{}
		(0,-5)node[fill,circle,inner sep=.5pt]{}
		(1,-5)node[fill,circle,inner sep=.5pt]{}
		(3,-4)node[fill,circle,inner sep=.5pt]{}
		(4,-5)node[fill,circle,inner sep=.5pt]{}
		(5,-5)node[fill,circle,inner sep=.5pt]{}
		(8,-4)node[fill,circle,inner sep=.5pt]{}
		(9,-4)node[fill,circle,inner sep=.5pt]{}
		(10,-5)node[fill,circle,inner sep=.5pt]{}
		(11,-5)node[fill,circle,inner sep=.5pt]{}
		(12,-5)node[fill,circle,inner sep=.5pt]{}
		(13,-5)node[fill,circle,inner sep=.5pt]{}
		(14,-5)node[fill,circle,inner sep=.5pt]{}
		(13,-4)node[fill,circle,inner sep=.5pt]{}
		(14,-4)node[fill,circle,inner sep=.5pt]{}
		(0,-4)node[fill,circle,inner sep=3pt]{}
		(1,-4)node[fill,circle,inner sep=3pt]{}
		(2,-4)node[fill,circle,inner sep=3pt]{}
		(2,-5)node[fill,circle,inner sep=3pt]{}
		(3,-5)node[fill,circle,inner sep=3pt]{}
		(4,-4)node[fill,circle,inner sep=3pt]{}
		(5,-4)node[fill,circle,inner sep=3pt]{}
		(6,-4)node[fill,circle,inner sep=3pt]{}
		(7,-4)node[fill,circle,inner sep=3pt]{}
		(10,-4)node[fill,circle,inner sep=3pt]{}
		(11,-4)node[fill,circle,inner sep=3pt]{}
		(12,-4)node[fill,circle,inner sep=3pt]{}
		(6,-5)node[fill,circle,inner sep=3pt]{}
		(7,-5)node[fill,circle,inner sep=3pt]{}
		(8,-5)node[fill,circle,inner sep=3pt]{}
		(9,-5)node[fill,circle,inner sep=3pt]{};
			\draw[very thick,red] plot [smooth,tension=.1] coordinates{(12,-4)(10,-4)(9,-5)(8,-5)}; 
			\draw[very thick,red] plot [smooth,tension=.1] coordinates{(7,-4)(4,-4)(3,-5)}; 
			\draw[very thick,red] plot [smooth,tension=.1] coordinates{(-2,-4)(-6,-4)};
		\draw[very thick,red] plot [smooth,tension=.1] coordinates{(-2,-5)(-6,-5)};
				\draw[very thick,green,dashed] plot [smooth,tension=.1] coordinates{(10,-3.9)(9,-4.9)(6,-4.9)};
		\draw[very thick,green,dashed] plot [smooth,tension=.1] coordinates{(6,-3.9)(4,-3.9)(3,-4.9)(2,-4.9)};
			\draw[very thick,green,dashed] plot [smooth,tension=.1] coordinates{(-2,-3.9)(-6,-3.9)};
		\draw[very thick,green,dashed] plot [smooth,tension=.1] coordinates{(-2,-4.9)(-6,-4.9)};
	}
\]
\end{example}

We note the following facts about the $\slinf$-crystal on $\cF_{e,\bs}$:
\begin{itemize}
	\item The $\sle$-crystal and the $\slinf$-crystal on $\cF_{e,\bs}$ commute  \cite{Losev}, \cite{Gerber1}.
	\item Each connected component of the $\slinf$-crystal has a unique source vertex, and is isomorphic as a graph to the branching graph of the symmetric groups in characteristic $0$, also known as the Young graph \cite{Gerber2}.
	\item 
In more detail: let $\bla_0$ be a source vertex of both the $\slinf$- and $\sle$-crystals on $\cF_{e,\bs}$. Let $P_1,P_2,\dots,P_k,\dots $ be its $e$-periods. Then each $\bla$ in the connected component of the $\slinf$-crystal with source vertex $\bla_0$ can be described as $\bla=\tilde{a}_\sigma(\bla_0)$, $\sigma\in\cP$, where $\tilde{a}_\sigma$ is the operator that moves $P_1$ to the right $\sigma_1$ times, $P_2$ to the right $\sigma_2$ times, $\dots $, $P_k$ to the right $\sigma_k$ times, $\dots$ \cite{Gerber2}.
	
\end{itemize}

There is a combinatorial criterion for the abacus $\cA|\bla,\bs\rangle$ of a charged bipartition to be a source vertex of the $\slinf$-crystal on $\cF_{e,\bs}$:

\begin{theorem}\label{slinf source}\cite[Theorem 7.13]{GerberN}
The charged bipartition $|\bla,\bs\rangle$ is a source vertex of the $\slinf$-crystal on $\cF_{e,\bs}$ if and only if its abacus $\cA|\bla,\bs\rangle$ avoids the following $e+1$ patterns in the semiinfinite region of the abacus bounded to the right, inclusive, by the column containing the rightmost bead of the first fore period $P_1$: 
$$(1)\;\TikZ{[scale=.5]
	\draw
	(0,1)node[fill,circle,inner sep=.5pt]{}
	(0,0)node[fill,circle,inner sep=.5pt]{}
	;}\quad
(2)\;\TikZ{[scale=.5]\draw
	(1,1)node[fill,circle,inner sep=.5pt]{}
	(0,0)node[fill,circle,inner sep=.5pt]{}
	(1,0)node[fill,circle,inner sep=2pt]{}
	(0,1)node[fill,circle,inner sep=2pt]{}
	;}\quad
(3)\;\TikZ{[scale=.5]\draw
	(-1,0)node[fill,circle,inner sep=.5pt]{}
	(1,1)node[fill,circle,inner sep=.5pt]{}
	(0,0)node[fill,circle,inner sep=2pt]{}
	(-1,1)node[fill,circle,inner sep=2pt]{}
	(1,0)node[fill,circle,inner sep=2pt]{}
	(0,1)node[fill,circle,inner sep=2pt]{}
	;}\quad
(4)\;\TikZ{[scale=.5]\draw
	(-1,0)node[fill,circle,inner sep=.5pt]{}
	(2,1)node[fill,circle,inner sep=.5pt]{}
	(0,0)node[fill,circle,inner sep=2pt]{}
	(-1,1)node[fill,circle,inner sep=2pt]{}
	(1,0)node[fill,circle,inner sep=2pt]{}
	(0,1)node[fill,circle,inner sep=2pt]{}
	(2,0)node[fill,circle,inner sep=2pt]{}
	(1,1)node[fill,circle,inner sep=2pt]{}
	;}\quad\dots\quad
(e+1)\;\TikZ{[scale=.5]\draw
	(-1,0)node[fill,circle,inner sep=.5pt]{}
	(0,0)node[fill,circle,inner sep=2pt]{}
	(-1,1)node[fill,circle,inner sep=2pt]{}
	(0,1)node[fill,circle,inner sep=2pt]{}
(1,0)node[fill,circle,inner sep=2pt]{}
(1,1)node[fill,circle,inner sep=2pt]{};}
\dots\TikZ{[scale=.5]\draw
	(0,1)node[fill,circle,inner sep=2pt]{}
	(0,0)node[fill,circle,inner sep=2pt]{}
	(1,0)node[fill,circle,inner sep=2pt]{}
	(2,0)node[fill,circle,inner sep=2pt]{}
	(1,1)node[fill,circle,inner sep=2pt]{}
	(2,1)node[fill,circle,inner sep=.5pt]{}
	;}
$$
\end{theorem}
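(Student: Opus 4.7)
The plan is to characterize source vertices of the $\slinf$-crystal directly from the edge rule and then match this characterization with the list of $e+1$ forbidden patterns. By Definition~\ref{slinf tot per} and the generalization to arbitrary abaci described before the theorem, an arrow $|\bmu,\bs\rangle\to|\bla,\bs\rangle$ is produced by shifting some fore period of $\cA|\bmu,\bs\rangle$ one step to the right so that its image becomes an aft period of $\cA|\bla,\bs\rangle$. Consequently, $|\bla,\bs\rangle$ is a source vertex if and only if no aft period of $\cA|\bla,\bs\rangle$ can be obtained in this way, equivalently, if and only if for every aft period $Q$ of $\cA|\bla,\bs\rangle$, sliding $Q$ one step to the left either is physically blocked or fails to produce an abacus in which the shifted chain plays the role of the corresponding fore period.

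I would then localize the analysis. Any aft period which could produce a preimage must lie in the semi-infinite right region $\mathcal{R}$ bounded to the left, inclusive, by the column of the rightmost bead of the first fore period $P_1$; aft periods strictly to the left of $\mathcal{R}$ cannot be shifted leftward without forcing a collision or a reindexing that contradicts the maximality defining $P_1$. This reduction is what allows the criterion to be stated purely in terms of local patterns inside $\mathcal{R}$.

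The main step is then a case analysis on the shape of an aft period $Q$ in $\mathcal{R}$, of which there are exactly $e+1$ possibilities (ranging from fully in the bottom row, through one bead in the top row, two beads in the top row, $\dots$, up to fully in the top row). For each shape, I would write down the minimal configuration of beads and spaces immediately to the left of $Q$ that is needed so that (i) the leftward shift of $Q$ is physically possible, and (ii) the shifted chain is actually a fore period of the resulting abacus (in particular maximal with respect to the total order on beads, and correctly indexed as the $k$-th fore period). These minimal configurations are precisely the $e+1$ patterns $(1),(2),\dots,(e+1)$ in the statement: pattern $(j)$ records exactly the obstruction needed for an aft period of shape $j$ to admit a leftward shift producing a valid preimage. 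Conversely, I would verify that if all $e+1$ patterns are avoided in $\mathcal{R}$, then for every candidate aft period and every shape, some necessary bead or space is missing, so no preimage exists, and $|\bla,\bs\rangle$ is a source.

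The most delicate point, and where I would expect the real work, is verifying the fore/aft indexing after the hypothetical leftward shift: one must ensure not only that the shifted chain is \emph{some} fore period of $\cA|\bmu,\bs\rangle$ but the \emph{correct} one matching the index of $Q$ in $\cA|\bla,\bs\rangle$, and that no other fore/aft period shifts its role. This is what pins down the exact geometry of the $e+1$ patterns and their placement within $\mathcal{R}$; it is handled using the total order on beads recalled after Definition~\ref{JL e-period def} together with the recursive definition of $P_k$, both of which stabilize precisely in the region $\mathcal{R}$.
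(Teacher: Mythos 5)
First, a point of order: the paper does not prove Theorem \ref{slinf source} at all. It is quoted from \cite[Theorem 7.13]{GerberN} and used as a black box, so there is no in-paper argument to measure your sketch against. Your general strategy — a source vertex is one with no incoming arrow, an incoming arrow corresponds to an aft period that can be slid one step left so as to become the correctly indexed fore period of a predecessor abacus, hence one must characterize when no such slide exists — is the natural one and is consistent with Definition \ref{slinf tot per} and the fore/aft discussion preceding the theorem. But as written the sketch has two genuine gaps.

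The first is that your localization goes the wrong way. The theorem requires the patterns to be avoided in the semi-infinite region bounded \emph{on the right} by the column of the rightmost bead of $P_1$, i.e.\ the region extending infinitely to the \emph{left}. Your region $\mathcal{R}$ is the complementary right half. Since $P_1$ is by definition the maximal $e$-chain of admissible shape in the total order on beads, every aft period has all of its beads in columns at most that of the rightmost bead of $P_1$; the aft periods capable of producing a preimage are therefore exactly the ones your reduction step throws away, and the region $[c,\infty)$ you keep contains essentially no periods at all (compare the paper's last example in Section \ref{crystals}, where a stray bead and an empty column sit to the right of $P_1$ without affecting sourcehood). This is not a cosmetic slip, because the sentence justifying the reduction ("aft periods strictly to the left of $\mathcal{R}$ cannot be shifted leftward") is false as stated.

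The second and more serious gap is that the identification of the $e+1$ patterns — which is the entire content of the theorem — is asserted rather than derived. You propose that pattern $(j)$ is in bijection with the $j$-th shape of an aft period (number of beads in the top row), but the patterns as listed do not have this form: pattern $(1)$ is a single empty column, and patterns $(2),\dots,(e+1)$ are blocks of $0,2,4,\dots,2e$ beads spanning $2,3,\dots,e+1$ columns with the lower-left and upper-right corners empty. These encode the presence of ``slack'' permitting some chain to slide left (or, equivalently, witnessing that some period sits one step to the right of a jammed position), not the shape of a single aft period, and the translation from ``some aft period admits a valid leftward shift whose image is the matching fore period'' to ``one of these local configurations occurs in the stated region'' is precisely the work that has to be done in both directions. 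You correctly flag the fore/aft reindexing after the hypothetical shift as the delicate point, but flagging it is not carrying it out. As it stands the proposal is a plausible plan, not a proof.
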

The first fore period $P_1$ is the biggest chain of $e$ beads in $\cA|\bla,\bs\rangle$ from among the possible shapes described in the paragraph following Theorem \ref{tot per}. 
\begin{example}
	Let $e=3$. The abacus below is a source vertex of the $\slinf$-crystal but not of the $\widehat{\mathfrak{sl}}_3$-crystal (since it is not totally $3$-periodic). The first fore period $P_1$ is marked with the red line.
		\[\TikZ{[scale=.5]	\draw
		(10,-2)node[fill,circle,inner sep=3pt]{}
		(11,-2)node[fill,circle,inner sep=3pt]{}
		(12,-2)node[fill,circle,inner sep=3pt]{}
		(13,-2)node[fill,circle,inner sep=3pt]{}
		(18,-2)node[fill,circle,inner sep=3pt]{}
		(16,-2)node[fill,circle,inner sep=3pt]{}
		(17,-2)node[fill,circle,inner sep=3pt]{}
		(10,-3)node[fill,circle,inner sep=3pt]{}
		(11,-3)node[fill,circle,inner sep=3pt]{}
		(12,-3)node[fill,circle,inner sep=.5pt]{}
		(15,-3)node[fill,circle,inner sep=3pt]{}
		(16,-3)node[fill,circle,inner sep=3pt]{}
		(17,-3)node[fill,circle,inner sep=3pt]{}
		(18,-3)node[fill,circle,inner sep=3pt]{}
		(19,-2)node[fill,circle,inner sep=3pt]{}
		(20,-2)node[fill,circle,inner sep=3pt]{}
		(13,-3)node[fill,circle,inner sep=.5pt]{}
		(14,-3)node[fill,circle,inner sep=3pt]{}
		(14,-2)node[fill,circle,inner sep=3pt]{}
		(15,-2)node[fill,circle,inner sep=3pt]{}
		(21,-2)node[fill,circle,inner sep=.5pt]{}
		(22,-2)node[fill,circle,inner sep=3pt]{}
		(19,-3)node[fill,circle,inner sep=.5pt]{}
		(20,-3)node[fill,circle,inner sep=3pt]{}
		(21,-3)node[fill,circle,inner sep=.5pt]{}
		(22,-3)node[fill,circle,inner sep=.5pt]{}
		(23,-2)node[fill,circle,inner sep=.5pt]{}
		(23,-3)node[fill,circle,inner sep=.5pt]{};
		\draw[very thick,red] plot [smooth,tension=.1] coordinates{(20,-2)(19,-2)(18,-3)}; 
	}
	\]
\end{example}

\section{Finite unitary groups and their unipotent representations in characteristic $\ell$}\label{unipunit}
\subsection{Harish-Chandra theory} 

Harish-Chandra theory was introduced by Harish-Chandra \cite{HC}, studied in characteristic $0$ by 
Howlett-Lehrer \cite{HowlettLehrer}, and in characteristic $\ell$ coprime to $q$ by Geck, Hiss, and Malle \cite{Hiss}, \cite{GHM1}, \cite{GHM2}. We refer to \cite{DigneMichel} for an introduction to Harish-Chandra theory. For $\rL\leq \rP\leq \rG$ a rational Levi subgroup of a rational parabolic subgroup $\rP$ of an algebraic group $\rG$ defined over $\mathbb{F}_q$ with respect to a Frobenius endomorphism $F$, there is a parabolic induction functor $R_{\rL^F}^{\rG^F}$ from $\rL^F$-representations to $\rG^F$-representations  given by inflating a representation from $\rL^F$ to $\rP^F$ by having the unipotent radical $\rU^F$ of $\rP^F$ act by $1$, and then inducing the inflated module to $\rG^F$. The adjoint functor $^*R_{\rL^F}^{\rG^F}$ is called Harish-Chandra restriction. In characteristic $0$ or characteristic $\ell$ coprime to $q$, the functors $R_{\rL^F}^{\rG^F}$ and $^*R_{\rL^F}^{\rG^F}$  are independent of the choice of $\rP$ \cite[Theorem 1.1]{HowlettLehrer2},\cite{DipperDu1}, and they are exact, biadjoint, and transitive with respect to inclusion of Levi subgroups. 
 
A cuspidal representation is a simple representation which does not appear in the head of any Harish-Chandra induced simple representation from a proper Levi subgroup. Equivalently (by adjointness of $R_{\rL^F}^{\rG^F}$ and $^*R_{\rL^F}^{\rG^F}$), a $\mathbbm{k}\rG$-module $Y$ is cuspidal if and only if $^*R_{\rL^F}^{\rG^F}Y=0$ for all proper rational Levi subgroups $\rL<\rG$. In characteristic $\ell$, the Harish-Chandra induction of a simple module $X$, $R_{\rL^F}^{\rG^F} X$, will not be semisimple in general. However, there is an isomorphism of $\mathbbm{k}\rG$-modules $R_{\rL^F}^{\rG^F} X\cong Z_1\oplus Z_2\oplus \dots \oplus Z_r$ for some $r>0$ such that each direct summand $Z_i$ is indecomposable and has a simple head $Y_i$ \cite[Theorem 2.4]{GHM2}. In this case, we say that $Y_i$ is in the same Harish-Chandra series as $X$. If $X$ is cuspidal, then $(L,X)$ is called a cuspidal pair and is said to be the cuspidal support of $Y_i$. Each simple module has a unique cuspidal support up to conjugacy \cite{Hiss}, and so the Harish-Chandra series partition the irreducible representations of $\mathbbm{k}\rG$-mod. 

\subsection{Finite unitary groups}
Let $q=p^r$ for some prime $p$ and some integer $r\geq 1$. Let $F_q:\GL_n(\overline{\mathbf{F}_q})\rightarrow \GL_n(\overline{\mathbf{F}_q})$ be the standard Frobenius endomorphism raising the entries of a matrix to the $q$'th power: $F_q(a_{ij})=(a_{ij}^q)$. Then define $F:\GL_n(\overline{\mathbf{F}_q})\rightarrow \GL_n(\overline{\mathbf{F}_q})$ by $F(a_{ij})=((a_{ij}^q)^{-1})^{\mathrm{tr}}$. The finite unitary group is given by the fixed points of $F$: $$\GU_n(q):=\GL_n(\overline{\mathbf{F}_q})^F$$
and $\GU_n(q)$ is naturally a subgroup of $\GL_n(q^2):=\GL_n(\overline{\mathbf{F}_q})^{F_{q^2}}$  since $F^2=F_{q^2}$ \cite[Example 21.2]{MalleTesterman}.

The finite unitary group has a split $BN$-pair; the Frobenius fixed points of the Weyl group of $\GL_n(\overline{\mathbb{F}_q})$ is the Coxeter group $B_m$ where $n=2m+\iota$, $\iota\in\{0,1\}$.
The conjugacy classes of rational Levi subgroups of $\GU_n(q)$ are in bijection with conjugacy classes of parabolic subgroups of $B_m$, which in turn are parametrized by subsets $I\subseteq S$ of the set of simple reflections $S\subset B_m$. These conjugacy classes of parabolic subgroups of $B_m$ are of the form $B_{m'}\times S_{\lambda_1}\times S_{\lambda_2}\times \dots \times S_{\lambda_s}$ where $m'\leq m$ and $\lambda=(\lambda_1,\lambda_2,\dots,\lambda_s)$ is a partition of $m-m'$. The corresponding conjugacy classes of Levi subgroups $L\leq \GU_{2m+\iota}(q)$ are of the form $$L=\GU_{2m'+\iota}(q)\times \GL_{\lambda_1}(q^2)\times\GL_{\lambda_2}(q^2)\dots\times \GL_{\lambda_s}(q^2).$$
Because of the ``twisted" nature of $\GU_n(q)$, there are not inclusions of Levi subgroups of rational parabolics between $\GU_n(q)$ and $\GU_{n+1}(q)$. Rather, $\GU_n(q)\subset \GU_{n+2}(q)$ with $\GU_n(q)\times \GL_1(q^2)$ being a Levi subgroup in $\GU_{n+2}(q)$. Thus there are two ``towers of algebras" of finite unitary groups, one for even $n$ and one for odd $n$: $$\GU_\iota(q)\subset \GU_{2+\iota}(q)\subset\dots\subset \GU_{2m+\iota}(q)\subset\GU_{2m+2+\iota}(q)\subset\dots $$
for $\iota\in\{0,1\}$.
  We will write $R_{n}^{n+2}$ as shorthand for the functor $R_{\GU_n(q)\times \GL_1(q^2)}^{\GU_{n+2}(q)}(- \otimes \st_1)$ where $\st_1$ is the trivial rep of $\GL_1(q^2)$.

\subsection{Unipotent representations of finite unitary groups and their combinatorics}\label{unip unitary combinat}
Let $\ell$ be a prime not dividing $q$. Fix an $\ell$-modular system $(K,\mathcal{O},\mathbbm{k})$ such that $K$ and $\mathbbm{k}$ are splitting fields for all subgroups of $\GU_n(q)$. A simple $K\GU_n(q)$-representation is called unipotent if its character is a constituent of a virtual Deligne-Lusztig character of the form $R_{\rT}^{\rG}(\mathrm{Id}_{\rT})$ where $\rT$ is a rational maximal torus \cite[Definition 13.19]{DigneMichel}. 
As in \cite{GHJ} we say that a simple $\mathbbm{k}\GU_n(q)$-representation is unipotent if it belongs to a block containing (the mod $\ell$ reduction of) a unipotent character; such a block is called a unipotent block. Let $\mathbbm{k}\GU_n(q)\mathrm{-mod}^{\mathrm{unip}}$ denote the direct sum of all unipotent blocks of $\mathbbm{k}\GU_n(q)$-mod. We remark that the word ``unipotent" is ambiguous, referring to ordinary (i.e. characteristic $0$) characters in some contexts and to irreducible Brauer characters or the modules with those characters in other contexts. 
 On the flip side, if we call a module unipotent then in particular it is a simple module in one of the two worlds, characteristic $0$ or characteristic $\ell>0$. 
 Hopefully it will be clear from the context which usage of unipotent is meant. In particular when we write ``$[-]$ is the unipotent part of a projective character" we always mean that $[-]$ is a positive linear combination of ordinary unipotent characters $\lambda$, given by summing relevant columns of the square decomposition matrix. This is simply working with the square decomposition matrix as if it were the decomposition matrix of a highest weight category, ignoring the non-unipotent characters.

The unipotent representations of $\GU_n(q)$ in characteristic $0$ and in characteristic $\ell$ are both labeled by partitions of $n$ \cite{LusztigSrinivasan},\cite{Geck}.  We will often drop the parentheses around partitions and use exponential notation, writing e.g. $21^3=(2,1,1,1)$, and we will freely identify partitions with unipotent $K\GU_n(q)$-representations. If $\lambda$ is a partition of $n$, let $X_\lambda$ be the unipotent $\mathbbm{k}\GU_n(q)$-representation labeled by $\lambda$, let $P_\lambda$ be the projective cover of $X_\lambda$, and let $Y_\lambda$ be the reduction modulo $\ell$ of an $\cO$-lattice in $\lambda\in K\GU_n(q)\mathrm{-mod}^{\mathrm{unip}}$. 
 When we work at the level of characters, the trio $X_\lambda$, $P_\lambda$, and $Y_\lambda$ play similar roles to the simple, projective indecomposable, and standard modules, respectively, in a highest weight category. In particular, the decomposition matrix is lower triangular with $1$'s on the diagonal when the partitions of $n$ are ordered lexicographically with $(n)$ being biggest and labeling the top (leftmost) row (column) \cite{Geck}. The decomposition numbers $d_{\lambda,\mu}$, which are the entries of this matrix, satisfy Brauer-Humphreys reciprocity: for any partitions $\lambda$ and $\mu$ of the same size, $d_{\lambda,\mu}:=[Y_\lambda:X_\mu]=[P_\mu:Y_\lambda]$. The unipotent blocks of $\mathbbm{k}\GU_n(q)$ are labeled by $e$-cores: $X_\lambda$ and $X_\mu$ are in the same block if and only if $\lambda$ and $\mu$ have the same $e$-core \cite{FongSrinivasan}.

In characteristic $0$, the unipotent $K\GU_n(q)$-representation $\lambda$ is cuspidal if and only if $\lambda=\Delta_t:=(t,t-1,t-2,\dots,2,1)$ for some $t\in\N_0$ \cite{Lusztig}. The $\ell$-reduction of $\Delta_t$ remains simple \cite{DMCusp}: $[\Delta_t:X_\mu]=\delta_{\mu,\Delta_t}$. 
Recall that the $2$-core of a partition $\lambda$ is obtained by successively removing vertical or horizontal dominoes from the rim of $\lambda$ so long as the boxes remaining still form the Young diagram of a partition, and doing this until no more dominoes can be removed. The $2$-core partitions are exactly the triangular partitions $\Delta_t$ for $t=0,1,2,\dots$ and thus the $2$-core partitions $\Delta_t$ such that $\frac{t(t+1)}{2}\leq n$ and  $\frac{t(t+1)}{2}=n\mod 2$ label the Harish-Chandra series of unipotent representations of $K\GU_n(q)$. The $2$-quotient of $\lambda$ is a bipartition $\lambda^1.\lambda^2$, which we define according to the following convention ($d$-quotients only being uniquely defined up to a cyclical permutation of the $\lambda^j$'s for every $\lambda\in\cP(n)$). First, write the $\beta$-numbers of $\lambda$:
$$\beta(\lambda)=\{\lambda_k-k+1\mid k\geq 1\}$$
and then write $\beta(\lambda)$ as a disjoint set of even and odd integers: $\beta(\lambda)=\beta^{\mathrm{even}}(\lambda)\sqcup\beta^{\mathrm{odd}}(\lambda)$. 
Writing $\beta^{\mathrm{even}}(\lambda)=\{2\beta(\lambda^2)\}$ and $\beta^{\mathrm{odd}}(\lambda)=\{2\beta(\lambda^1)-1\}$, the $2$-quotient of $\lambda$ is then $\lambda^1.\lambda^2$. This formula agrees with the implementation of $2$-quotients in Sage \cite{sage}. Since $|\lambda|=2|2\hbox{-quotient}(\lambda)|+|2\hbox{-core}(\lambda)|$ and there is at most one $2$-core of size $n$ for any $n\in\N_0$, the $2$-core of a partition is determined once its $2$-quotient is known.

\begin{example}
Let $\lambda=21^4$. Then $\beta(\lambda)=\{2,0,-1,-2,-3,-5,-6,-7,-8,-9,\dots\}$. To obtain $\lambda^2$, we write $\beta^{\mathrm{even}}(\lambda)=\{2,0,-2,-6,-8,-10,\dots\}\rightsquigarrow\{1,0,-1,-3,-4,-5,\dots\}\rightsquigarrow \lambda^2=1^3$. To obtain $\lambda^1$, we write $\beta^{\mathrm{odd}}(\lambda)=\{-1,-3,-5,-7,-9,\dots\}\rightsquigarrow\{0,-1,-2,-3,-4,\dots\}\rightsquigarrow\lambda^1=\emptyset$. The $2$-core of $\lambda$ is $\emptyset$.
\end{example}

Let $e$ be the order of $-q$ mod $\ell$. When $e$ is even, a complete description of the Harish-Chandra series of simple unipotent $\mathbbm{k}\GU_n(q)$-modules was conjectured in \cite[Conjecture 9.2]{GHM1}, verified for blocks with cyclic defect groups in \cite[Proposition 9.3]{GHM1}, and verified in full generality in \cite{GruberHiss}.
The description can be framed in terms of the combinatorics of a product of level $1$ Fock spaces. The case that is not fully understood yet is when $e$ is odd and $e\geq 3$. Each $\Delta_t$ yields a level $2$ Fock space $\mathcal{F}_{e,t}$ of rank $e$ and charge $(s_1,s_2)$ determined by $t$ and $e$, with the bipartitions labeling the basis of the Fock space given by the ``twisted $2$-quotients" of all partitions with $2$-core $\Delta_t$. 
Let $\lambda$ be a partition with $2$-core $\Delta_t$ and $2$-quotient $\lambda^1.\lambda^2$. Then the bijection $\tau$ between partitions $\lambda$ with $2$-core $\Delta_t$ and the charged bipartitions in $\mathcal{F}_{e,t}$ is given by:

$$\tau:\;\lambda\mapsto \begin{cases}
|\lambda^1.\lambda^2,(-(t+1+e)/2,\;t/2)\rangle \quad \hbox{ if }t\hbox{ is even}\\
|\lambda^2.\lambda^1,(-(t+1)/2,\;(t+e)/2)\rangle \quad \hbox{ if }t\hbox{ is odd}
\end{cases}
$$
In particular, in this convention using the twisted $2$-quotient we have $$s_2-s_1=\frac{e+1}{2}+t$$ and for computing crystals on the Fock space $\mathcal{F}_{e,t}$, the value of $s_2-s_1$ is all that matters up to a relabeling of the arrows. In our formula, we have taken the formula from \cite{DVV1} and in the case of $t$ odd, we have  applied the crystal-preserving isomorphism $|\lambda^1.\lambda^2,(s_1,s_2)\rangle\mapsto|\lambda^2.\lambda^1,(s_2,s_1+e)\rangle$. We call this the twisted $2$-quotient, following \cite{GHJ}. We remark that the charge for the Fock space looks different in \cite{GHJ} due to opposite conventions for taking the untwisted $2$-quotient, but once this is taken into account it is equivalent. We write $\bs_t$ for the charge $(-(t+1+e)/2,t/2)$ if $t$ is even, and $(-(t+1)/2,(t+e)/2)$ if $t$ is odd.

Given an arbitrary charged bipartition $|\bmu,\bs_t\rangle=|\mu^1.\mu^2,\bs_t\rangle$ with charge corresponding to some $\Delta_t$,
the partition $\lambda$ such that $\tau(\lambda)=|\bmu,\bs_t\rangle$ can be recovered as follows \cite[Lemma 7.2]{GHJ}: let $\mathfrak{A}_{e,t}(\bmu,\bs_t)=\{2j\mid j\in\beta^1\}\cup\{2j-e\mid j\in\beta^2\}$. Then $\lambda$ is the partition whose $1$-line abacus is given by $\mathfrak{A}_{e,t}(\bmu,\bs_t)$, up to a global shift of the $\beta$-numbers.

Write $n=2m+\iota$ where $\iota\in\{0,1\}$. If $\iota=0$, the principal series representations of $K\GU_n(q)$ are labeled by partitions $\lambda$ of $n$ whose $2$-core is $\emptyset=\Delta_0$. If $\iota=1$, the principal series representations of $K\GU_n(q)$ are labeled by $\lambda$ with $2$-core $(1)=\Delta_1$. We write $X_\iota$ for the unique unipotent module of $\mathbbm{k}\GU_\iota(q)$, where $X_0$ is simply the trivial representation of the trivial group, an honorary unipotent module, and $X_1$ is the trivial representation of $\GU_1(q)$. The subgroup $\GL_r(q^2)\subset\GU_n(q)$ for $r\leq m$ also has unipotent representations labeled by partitions (in this case, partitions of $r$), and all unipotent $K\GL_r(q^2)$-representations belong to the principal series. Using $\tau$, we can do computations involving Harish-Chandra induction by working with type $B$ Coxeter groups and their parabolic subgroups. Let's say $\rL=\GU_{2m+\iota}(q)\times \GL_{r}(q^2)$ and $\rG=\GU_{2m+\iota+2r}(q).$ Harish-Chandra induction preserves the principal series from characteristic $0$, i.e. it preserves the set of ordinary unipotent characters $\lambda$ with $2$-core $\Delta_0$ if $\iota=0$ and $2$-core $\Delta_1$ if $\iota=1$, so to find $R_{\rL}^{\rG}\mu\otimes\lambda$ at the level of the Grothendieck group we just compute $\Ind_{B_m\times S_r}^{B_{m+r}}\tau(\mu)\otimes\lambda$ and then apply $\tau^{-1}$ with $t=0$ if $\iota=0$ and $t=1$ if $\iota=1$.

We will refer to $(1^n)\in K\GL_n(q^2)\hbox{-mod}^{\mathrm{unip}} $ as the Steinberg representation and we will denote it by $\St_n$. Hopefully this will avoid confusion over what group we are working in, since unipotent representations of $\GL_n(q^2)$ and $\GU_n(q)$ are both labeled by partitions of $n$. We will denote the unipotent $\mathbbm{k}\GL_n(q^2)$-representation $X_{(1^n)}$ by $\st_n$. We always have $[P_{\st_n}: \lambda]=\delta_{(1^n),\lambda}$. Thus $\St_n$ is the unipotent part of a projective character. This means it plays a similar role to a standard module that is also projective in a highest weight category.

\subsection{The $\slinf$-crystal versus blocks} Let $\lambda\in\cP$ be a partition with $2$-core $\Delta_t$ for some $t\in\N_0$. Write $\tilde{a}_\sigma(\lambda)$ as shorthand for $\tau^{-1}\left(\tilde{a}_\sigma\left(\tau(\lambda)\right)\right)$, i.e. $\tilde{a}_\sigma(\lambda)$ is the partition with $2$-core $\Delta_t$ whose twisted $2$-quotient is $\tilde{a}_\sigma(\tau(\lambda))$.
\begin{lemma}\label{blocks slinf} 
	The $e$-core of $\tilde{a}_\sigma(\lambda)$ is the same as the $e$-core of $\lambda$.
\end{lemma}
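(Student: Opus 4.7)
The $e$-core of a partition $\nu$ is determined by the multiset of residues modulo $e$ of its $\beta$-set, since sliding each bead as far left as possible on its $e$-runner depends only on the distribution of beads across the runners. So it suffices to show that $\lambda$ and $\tilde{a}_\sigma(\lambda)$ have the same multiset of residues mod $e$. Because $\tilde{a}_\sigma$ factors as a composition of operators each of which shifts a single $e$-period $P_k$ one step to the right, I reduce to verifying the claim for one such elementary right-shift.

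Next I would translate the shift from the $2$-line abacus to the $1$-line abacus of $\lambda$ via the formula $\mathfrak{A}_{e,t}(\bmu,\bs_t) = \{2j : j \in \beta^1\} \cup \{2j-e : j \in \beta^2\}$ recalled just before the lemma. Moving a bead one step right in either row of $\cA|\bmu,\bs_t\rangle$ moves the corresponding $1$-line $\beta$-number of $\lambda$ two steps to the right. Hence one right-shift of $P_k$ has the effect on $\beta(\lambda)$ of deleting $e$ specific numbers $p_1,\dots,p_e$ and inserting $p_1+2,\dots,p_e+2$; the $e$-core is preserved precisely when the multiset $\{p_i \bmod e\}_{i=1}^e$ is invariant under the cyclic shift $r \mapsto r+2 \bmod e$.

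The remaining step is a direct computation. By Definition \ref{JL e-period def}, the $e$-period occupies $2$-line $\beta$-positions $x, x-1, \dots, x-e+1$ with the rightmost $u$ beads in row $2$ and the leftmost $e-u$ in row $1$. Applying $\mathfrak{A}_{e,t}$ and reducing modulo $e$ (using $-e \equiv 0$), the row-$2$ part contributes residues $\{2x-2i \bmod e : 0 \le i \le u-1\}$ and the row-$1$ part contributes $\{2x-2u-2j \bmod e : 0 \le j \le e-u-1\}$. These glue together into the single multiset $\{2x-2k \bmod e : 0 \le k \le e-1\}$, and shifting this by $+2$ amounts to the reindexing $k \mapsto k-1$ in $\Z/e\Z$, which is a bijection. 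Hence the multiset is invariant, and the lemma follows. I expect the main obstacle to be this last gluing step: one has to track the twist by $-e$ in the row-$2$ contribution carefully enough to confirm that it merges with the untwisted row-$1$ contribution to form a complete arithmetic progression of step $2$ modulo $e$. Once that is in hand, invariance under the shift $r \mapsto r+2$ is automatic and the proof concludes.
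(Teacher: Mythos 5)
Your proof is correct, and it takes a genuinely different route from the paper's. The paper reduces a single right-shift of an $e$-period to a composition of the two ``elementary operations'' of \cite[Section 7.3]{GHJ} on the two-row abacus (moving a bead from row $2$ to row $1$ in the same column, or from row $1$ up to row $2$ and $e$ columns to the right) and then invokes \cite[Proposition 7.3]{GHJ}, which says those operations preserve $e$-cores. You instead push the whole shift through $\mathfrak{A}_{e,t}$ to the one-line abacus of $\lambda$ and verify the core-preservation directly: the shift deletes $e$ of the $\beta$-numbers and reinserts them increased by $2$, and the computation with the $-e$ twist on the row-$2$ contribution shows the deleted numbers have residues $\{2x-2k \bmod e : 0\le k\le e-1\}$, a multiset manifestly invariant under $r\mapsto r+2$, so the bead count on each $e$-runner --- and hence the $e$-core --- is unchanged. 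Both arguments correctly isolate only the shape conditions (the second and third bullets of Definition \ref{JL e-period def}), so both cover the fore periods appearing in edges out of non-totally-periodic abaci. What your version buys is self-containedness and transparency: it replaces the external citation by a two-line residue computation and makes visible the reason the lemma holds, namely that the $e$ beads of a period map under $\mathfrak{A}_{e,t}$ to a full step-$2$ arithmetic progression modulo $e$. What the paper's version buys is brevity and a direct link to the block-theoretic statement in \cite{GHJ} that is reused elsewhere (e.g.\ in the proof of Theorem \ref{blocks with cuspidals}). The only point worth polishing in your write-up is the opening sentence: since $\beta$-sets are infinite, ``the multiset of residues of the $\beta$-set'' should be phrased as the statement that a modification of finitely many $\beta$-numbers preserves the $e$-core as soon as the deleted and inserted numbers have equal residue multisets (equivalently, the bead count on each runner of the $e$-abacus is unchanged), which is exactly what your computation establishes.
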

\begin{proof}	
	Write $\tau(\lambda)=|\lambda^1.\lambda^2,\bs_t\rangle$ and let $\cA$ be the abacus of $\tau(\lambda)$. Let $P\subset\cA$, $P=\{(x_1,j_1),(x_2,j_2),\dots, (x_e,j_e)\}$, be a collection of $e$ beads of $\cA$ satisfying the second and third conditions of Definition \ref{JL e-period def}.
	 We will show that moving $P$ one place to the right does not change the $e$-core of the corresponding partition. Suppose $P$ can move to the right, so if $j_1=\dots= j_i=2$ and $j_{i+1}=\dots= j_e=1$ for some $i\in\{0,1,\dots,e\}$, then $(x_1+1,j_1)\notin\cA$ and $(x_{i+1}+1,1)\notin\cA$. Moving $P$ one place to the right put more pedantically consists of two steps: first, removing all $e$ beads $(x_b,j_b)\in P$ from $\cA$ for $b\in\{1,\dots,e\}$; second adding the beads $(x_{b+1},j_b)$ to $\cA\setminus P$. This is the same as just removing the bead $(x_e,j_e)$ from $\cA$ and adding the bead $(x_1+1,j_1)$ to $\cA$, and if $i\neq 0,e$, also removing $(x_i,j_2)$ from $\cA$ and adding the bead $(x_i,j_1)$ to $\cA$. Or in other words, if $P$ is not a horizontal strip so $i\neq 0,e$ then moving $P$ one place to the right is the same as moving its leftmost bead in the bottom row $e$ spaces to the right and up to the top row, and moving its leftmost bead in the top row down to the bottom row. If $P$ is a horizontal strip so $i=0$ or $e$, then moving $P$ one place to the right is the same as moving its leftmost bead $e$ places to the right in the same row. This is again composite of the same two types of moves: (i) moving a bead from row $2$ to row $1$ in the same column, or (ii) moving a bead from row $1$ to the column $e$ spaces to the right and up to row $2$. In \cite[Section 7.3]{GHJ}, the inverse moves to the moves of types (i) and (ii) are called elementary operations. It is shown in \cite[Proposition 7.3]{GHJ} that elementary operations preserve $e$-cores. Therefore moving $P$ one place to the right produces the twisted $2$-quotient of a partition with the same $e$-core as $\lambda$. Any edge in the $\slinf$-crystal is given by moving some subset $P\subset\cA$ as above one place to the right. Therefore $\tilde{a}_\sigma(\lambda)$, which is obtained by following a path of edges in the $\slinf$-crystal, is a partition with the same $e$-core as $\lambda$.
\end{proof}

\subsection{Projective covers and cuspidal supports}
We recall some classical statements about the relationship between the cuspidal support of a simple module and the behavior of its projective cover under Harish-Chandra induction. The proofs are straightforward and can be found for example in \cite[Section 10]{DudasMichel}. Given  $\mathbbm{k}\GU_n(q)$-modules $M$ and $N$, we write $M\mid N$ if $M$ is a direct summand of $N$. Let $G$ be a finite group of Lie type and let $L$ be a standard Levi subgroup of $G$.

\begin{lemma}\label{proj lemma 0}
	Let $X$ be a cuspidal simple $\mathbbm{k}L$-module and let $Y$ be a simple $\mathbbm{k}G$-module such that $R_L^G X\twoheadrightarrow Y$. Let $P_X$ and $P_Y$ be the projective covers of $X$ and $Y$, respectively. Then $P_Y\mid R_L^G P_X$.
\end{lemma}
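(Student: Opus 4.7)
The plan is to exploit the fact, noted earlier in the excerpt, that Harish-Chandra induction $R_L^G$ is exact and biadjoint to the Harish-Chandra restriction functor ${}^*R_L^G$, which is also exact. Because $R_L^G$ is left adjoint to an exact functor, it sends projectives to projectives; in particular $R_L^G P_X$ is a projective $\mathbbm{k}G$-module. The lemma then reduces to a standard fact about finite-dimensional algebras: any projective module that surjects onto a simple module $Y$ must contain $P_Y$ as a direct summand.

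Carrying this out, I first apply $R_L^G$ to the essential epimorphism $P_X \twoheadrightarrow X$; by exactness this yields a surjection $R_L^G P_X \twoheadrightarrow R_L^G X$. Composing with the given surjection $R_L^G X \twoheadrightarrow Y$ produces a surjection $R_L^G P_X \twoheadrightarrow Y$. Decomposing the projective module $R_L^G P_X$ as a sum of indecomposable projectives $\bigoplus_i P_i^{\oplus n_i}$, the existence of a surjection onto the simple module $Y$ forces $Y$ to appear in the head $R_L^G P_X / \mathrm{rad}(R_L^G P_X) \cong \bigoplus_i \mathrm{hd}(P_i)^{\oplus n_i}$, so $P_Y \cong P_i$ for some $i$ with $n_i \geq 1$, which is the same as saying $P_Y \mid R_L^G P_X$.

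There is no genuine obstacle: the entire content of the argument is the preservation of projectives by a functor with an exact right adjoint. I note in passing that the cuspidality hypothesis on $X$ is not strictly used in this step; it is stated because the lemma will be applied in the context where $(L,X)$ is the cuspidal support of $Y$, and the subsequent lemmas presumably refine the summand $P_Y\mid R_L^G P_X$ into sharper multiplicity information using that hypothesis.
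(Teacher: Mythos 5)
Your argument is correct and is exactly the standard one the paper has in mind: it states this lemma without proof, deferring to \cite[Section 10]{DudasMichel}, and the content there is precisely that $R_L^G$ preserves projectives (being exact with an exact biadjoint) so that the composite surjection $R_L^G P_X\twoheadrightarrow R_L^G X\twoheadrightarrow Y$ forces $P_Y\mid R_L^G P_X$. Your observation that the cuspidality of $X$ plays no role in this particular step is also accurate.
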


\begin{lemma}\label{proj lemma 1}
	Suppose $X$ is a simple $\mathbbm{k}\rL$-module and let $P$ be its projective cover. If $Q$ is a projective indecomposable $\mathbbm{k}G$-module such that $Q\mid R_{\rL}^{\rG} P$, then the simple head $Y$ of $Q$ has cuspidal support $(\rM, Z)$ for some standard Levi subgroup $\rM \subseteq \rL$ and some cuspidal simple $\mathbbm{k}M$-module $Z$.
\end{lemma}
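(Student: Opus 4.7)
The strategy is to trace the cuspidal support of $X$ in $\mathbbm{k}\rL\text{-mod}$ through the Harish-Chandra induction functor, using transitivity to transfer it to a cuspidal support for $Y$ in $\mathbbm{k}\rG\text{-mod}$. Since $X$ is a simple $\mathbbm{k}\rL$-module, it has its own cuspidal support $(\rM, Z)$ with $\rM$ a standard Levi of $\rL$ (and in particular of $\rG$) and $Z$ a cuspidal simple $\mathbbm{k}\rM$-module. The plan is to show that this same pair $(\rM, Z)$ serves as the cuspidal support of $Y$.

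First I would invoke Lemma \ref{proj lemma 0} at the level of $\rL$: since $X$ appears in the head of $R_\rM^\rL Z$, the projective cover $P = P_X$ is a direct summand of $R_\rM^\rL P_Z$, where $P_Z$ denotes the projective cover of $Z$. Next, applying the exact functor $R_\rL^\rG$ and using transitivity of Harish-Chandra induction, we obtain that $R_\rL^\rG P$ is a direct summand of $R_\rL^\rG R_\rM^\rL P_Z = R_\rM^\rG P_Z$. Hence $Q$, being a PIM summand of $R_\rL^\rG P$, is a PIM summand of $R_\rM^\rG P_Z$ as well. From here, the simple head $Y$ of $Q$ appears in the head of $R_\rM^\rG P_Z$, which surjects onto $R_\rM^\rG Z$ by exactness, and the simple quotients of $R_\rM^\rG Z$ are by definition the members of the Harish-Chandra series of the cuspidal pair $(\rM, Z)$. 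By the uniqueness of cuspidal support, $Y$ then has cuspidal support $(\rM, Z)$ with $\rM \subseteq \rL$, as desired.

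I expect the main subtlety to be the last step, namely the passage from ``$Y$ is the head of some summand of $R_\rM^\rG P_Z$'' to ``$Y$ lies in the Harish-Chandra series of $(\rM, Z)$.'' This requires the Geck--Hiss--Malle theory of Harish-Chandra series in non-defining characteristic, which identifies the simple quotients of $R_\rM^\rG Z$ (for $Z$ cuspidal) with the simple modules in the associated Harish-Chandra series via the endomorphism Hecke algebra. Once this standard dictionary is in hand, the argument is formal. The proof in \cite{DudasMichel} cited in the text presumably records this bookkeeping in a directly applicable form, and can be invoked to close the argument without reproving it from scratch.
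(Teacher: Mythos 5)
There is a genuine gap, and it sits exactly where you predicted the ``main subtlety'' would be. Your reduction to $Q\mid R_{\rM}^{\rG}P_Z$ is fine (Lemma \ref{proj lemma 0} applied inside $\rL$, then exactness and transitivity of $R$), but the final inference is logically backwards: from the surjection $R_{\rM}^{\rG}P_Z\twoheadrightarrow R_{\rM}^{\rG}Z$ you can only conclude that the head of $R_{\rM}^{\rG}Z$ is contained in the head of $R_{\rM}^{\rG}P_Z$, not the reverse. Indeed $\Hom(R_{\rM}^{\rG}P_Z,Y)\cong\Hom(P_Z,{}^*R_{\rM}^{\rG}Y)$ is nonzero precisely when $Z$ is a \emph{composition factor} of ${}^*R_{\rM}^{\rG}Y$, whereas $Y$ lies in the Harish-Chandra series of $(\rM,Z)$ only when $Z$ appears in the head (equivalently socle) of ${}^*R_{\rM}^{\rG}Y$; the first condition is strictly weaker in general. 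Consequently the conclusion you reach --- that $Y$ has cuspidal support equal to the cuspidal support $(\rM,Z)$ of $X$ --- is strictly stronger than what the lemma asserts and is false in general: the head of an induced PIM can meet Harish-Chandra series attached to Levi subgroups strictly smaller than $\rM$. This failure is precisely why the paper needs the separate Lemmas \ref{proj lemma 1.5} and \ref{proj lemma 2}, and later unitriangularity arguments, to pin down cuspidal supports exactly.

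The lemma itself is much cheaper than your route suggests, and the paper's (commented-out) proof is one line: $0\neq\Hom(R_{\rL}^{\rG}P,Y)\cong\Hom(P,{}^*R_{\rL}^{\rG}Y)$, so ${}^*R_{\rL}^{\rG}Y\neq 0$, and by transitivity of Harish-Chandra restriction together with uniqueness of cuspidal support this forces the support of $Y$ to live on a standard Levi $\rM\subseteq\rL$ (no claim about which cuspidal pair). Your argument can be salvaged to prove exactly this (in fact with $\rM$ replaced by the smaller Levi carrying the cuspidal support of $X$): from $Q\mid R_{\rM}^{\rG}P_Z$ and projectivity of $P_Z$ one gets $[{}^*R_{\rM}^{\rG}Y:Z]\neq 0$, hence ${}^*R_{\rM}^{\rG}Y\neq 0$ and the support Levi of $Y$ is contained in $\rM$. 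But you must stop there; the identification of the precise cuspidal pair does not follow.
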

\begin{lemma}\label{proj lemma 1.5}
 Suppose $X_1,X_2\in\mathbbm{k}\rL$-mod are cuspidal simple modules with projective covers $P_1,P_2$. Suppose 
 $Y\in\mathbbm{k}\rG$-mod has cuspidal support $(\rL,X_1)$ and let $Q$ be the projective cover of $Y$.
 If $Q\mid R_{\rL}^{\rG}P_2$ then $X_1$ is $G$-conjugate to  $X_2$.
\end{lemma}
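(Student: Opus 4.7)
The plan is to use biadjunction between $R_L^G$ and ${}^*R_L^G$ to translate the projective-summand hypothesis into information about ${}^*R_L^G Y$, then invoke uniqueness of the cuspidal support.

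First I would unpack the inclusion $Q \mid R_L^G P_2$. Since $Q = P_Y$ has simple head $Y$, this produces a nonzero map $R_L^G P_2 \twoheadrightarrow Q \twoheadrightarrow Y$, i.e. $\Hom_{\mathbbm{k}G}(R_L^G P_2, Y) \neq 0$. By Frobenius reciprocity (biadjunction of $R_L^G$ and ${}^*R_L^G$),
\[
0 \neq \Hom_{\mathbbm{k}G}(R_L^G P_2, Y) \cong \Hom_{\mathbbm{k}L}(P_2, {}^*R_L^G Y).
\]
Since $\mathbbm{k}L$ is a symmetric algebra and $P_2$ is the projective cover of $X_2$, the dimension of the right-hand side equals the composition multiplicity $[{}^*R_L^G Y : X_2]$, so $X_2$ is a composition factor of ${}^*R_L^G Y$.

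Next I would upgrade this from ``composition factor'' to ``cuspidal pair of $Y$''. Because $X_2$ is cuspidal in $\mathbbm{k}L$, we have ${}^*R_{L'}^L X_2 = 0$ for every proper Levi $L' \subsetneq L$; combining this with the transitivity ${}^*R_{L'}^G = {}^*R_{L'}^L \circ {}^*R_L^G$, the contribution of $X_2$ never descends to a proper Levi of $L$. Thus $(L, X_2)$ is a minimal pair witnessing a cuspidal support of $Y$ in the sense of \cite{Hiss}; equivalently, the nonvanishing $\Hom_{\mathbbm{k}L}(X_2, {}^*R_L^G Y) \cong \Hom_{\mathbbm{k}G}(R_L^G X_2, Y)$ (which one can extract from the composition-factor information by using cuspidality of $X_2$ inside its block of $\mathbbm{k}L$) places $Y$ in the Harish-Chandra series attached to $(L, X_2)$.

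Finally, the uniqueness of the cuspidal support up to $G$-conjugacy, cited in the paragraph above Lemma \ref{proj lemma 0}, forces the two cuspidal pairs $(L, X_1)$ and $(L, X_2)$ of $Y$ to be $G$-conjugate; in particular $X_1$ and $X_2$ are $G$-conjugate, as desired.

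The main obstacle is the intermediate step of promoting ``$X_2$ is a composition factor of ${}^*R_L^G Y$'' to ``$(L, X_2)$ is a cuspidal pair of $Y$''; this relies crucially on cuspidality of $X_2$ and on the Howlett--Lehrer--Geck--Hiss--Malle description of $R_L^G P_{X_2} \cong \bigoplus_{Y' \in \mathrm{HC}(L,X_2)} P_{Y'}^{n_{Y'}}$, which ensures that summands $P_Y$ of $R_L^G P_{X_2}$ correspond precisely to simples $Y$ in the Harish-Chandra series of $(L, X_2)$. Everything else is formal manipulation with adjunctions and uniqueness.
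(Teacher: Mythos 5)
Your opening move is the same as the paper's: from $Q\mid R_{\rL}^{\rG}P_2$ and $Q\twoheadrightarrow Y$ you get $\Hom(R_{\rL}^{\rG}P_2,Y)\neq 0$, hence by adjunction $\Hom(P_2,{}^*R_{\rL}^{\rG}Y)\neq 0$, i.e.\ $X_2$ is a composition factor of ${}^*R_{\rL}^{\rG}Y$. The gap is in your middle step, where you promote ``$X_2$ is a composition factor of ${}^*R_{\rL}^{\rG}Y$'' to ``$Y$ lies in the Harish-Chandra series of $(\rL,X_2)$.'' Membership of $Y$ in that series is equivalent to $\Hom(X_2,{}^*R_{\rL}^{\rG}Y)\neq 0$, i.e.\ to $X_2$ occurring in the \emph{socle} of ${}^*R_{\rL}^{\rG}Y$, and a composition factor need not lie in the socle. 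Your parenthetical justification (``by using cuspidality of $X_2$ inside its block'') does not supply this, and the decomposition $R_{\rL}^{\rG}P_{X_2}\cong\bigoplus_{Y'}P_{Y'}$ over the series of $(\rL,X_2)$ that you invoke at the end is, by the very adjunction you used, \emph{equivalent} to the claim that every simple $Y'$ with $[{}^*R_{\rL}^{\rG}Y':X_2]\neq 0$ lies in that series --- so you are assuming what needs to be proved. Note also that your argument only ever uses cuspidality of $X_2$, whereas the statement's force comes from cuspidality of $X_1$.

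The paper closes this gap with a Mackey argument on the \emph{other} cuspidal pair: since $X_1$ is cuspidal, the Mackey formula gives that ${}^*R_{\rL}^{\rG}R_{\rL}^{\rG}X_1$ is a direct sum of $G$-conjugates of $X_1$; applying the exact functor ${}^*R_{\rL}^{\rG}$ to the surjection $R_{\rL}^{\rG}X_1\twoheadrightarrow Y$ (which exists because $(\rL,X_1)$ is the cuspidal support of $Y$) exhibits ${}^*R_{\rL}^{\rG}Y$ as a quotient of that semisimple module. Hence \emph{every} composition factor of ${}^*R_{\rL}^{\rG}Y$ is $G$-conjugate to $X_1$; combined with your (correct) first step that $X_2$ is such a composition factor, this finishes the proof without any appeal to the uniqueness of cuspidal support. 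If you want to salvage your route, you would need to first prove the semisimplicity of ${}^*R_{\rL}^{\rG}Y$ --- but that is exactly the Mackey computation, at which point the conclusion is already immediate.
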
 

\subsection{Harish-Chandra theory for $\GU_n(q)$ in characteristic $\ell$ and the weak branching graph} In this section, we summarize the work of Gerber-Hiss-Jacon \cite{GHJ}, Gerber-Hiss \cite{GH}, and Dudas-Varagnolo-Vasserot \cite{DVV1} on the weak Harish-Chandra branching graph of the unipotent category of the finite unitary groups in characteristic $\ell$ for a unitary prime $\ell$. We assume $\ell$ does not divide $q^2-1$. The case that $e$ is odd and the order of $-q \mod \ell$ is equal to $e$ is called the unitary prime case, and is the case that has presented the most difficulty. The case of a linear prime $\ell$, which is the case that $e$ is even, is described in terms of representation theory of $\GL_n(q)$, the combinatorics breaking into a product of level $1$ Fock spaces \cite{DipperDu},\cite{GruberHiss},\cite{GHM1}. 
\subsubsection{Possible cuspidal supports of $\mathbbm{k}\GU_n(q)$-modules}
The standard Levi subgroups of $\GU_n(q)$ are of the form 
$L=\GU_{n'}(q)\times \GL_{\alpha_1}(q^2)\times \GL_{\alpha_2}(q^2)\times\dots\times \GL_{\alpha_s}(q^2)$ such that  $n \equiv n'\mod 2$, $0\leq n'\leq n$, $s\geq 0$, and $(\alpha_1,\alpha_2,\dots,\alpha_s)$ is a partition of $\frac{n-n'}{2}$. The results of Dipper-Du on cuspidal supports of unipotent representations of finite general linear groups state that the only Levi subgroups as above that can possibly afford a cuspidal representation when $\ell>n/e$ are those such that $\alpha_i\in\{1,e\}$ for all $i=1,\dots, s$ \cite{DipperDu}. The only cuspidal unipotent representation of $\GL_1(q^2)$ is $\st_1$ (it is the only unipotent representation of $\GL_1(q^2)$ and equal to the trivial representation), and the only cuspidal unipotent representation of $\GL_e(q^2)$ is $\st_e$ \cite{DipperDu}. Thus the cuspidal support of an arbitrary $\mathbbm{k}\GU_n(q)$-module is of the form $\left(\GU_{n'}(q)\times\GL_e(q^2)^{\times k}\times \GL_1(q^2)^{\times r}, X_{\lambda_0}\otimes \st_e^{\otimes k}\otimes \st_1^{\otimes r}\right)$. The problem is then to identify $n'$, $k$, $r$ and $\lambda_0$ given $\lambda$.

 In the case that $G=\GU_n(q)$, Lemma \ref{proj lemma 1.5} may be strengthened.
\begin{lemma}\label{proj lemma 2} Let $L=\GU_{n'}(q)\times \GL_e(q^2)\times \dots \times \GL_e(q^2)\leq G$ be a standard Levi subgroup of $G=\GU_n(q)$. 
	Suppose $X_1,X_2\in\mathbbm{k}\rL$-mod are cuspidal unipotent modules with projective covers $P_1,P_2$. Suppose 
	$Y\in\mathbbm{k}\rG$-mod has cuspidal support $(\rL,X_1)$ and let $Q$ be the projective cover of $Y$.
	If $Q\mid R_{\rL}^{\rG}P_2$ then $X_1\cong X_2$.
\end{lemma}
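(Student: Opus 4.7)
The plan is to bootstrap from Lemma \ref{proj lemma 1.5}, whose conclusion already gives $G$-conjugacy, and then show that for the very specific shape of $L$ and of cuspidal unipotent modules of $L$, $G$-conjugacy already forces isomorphism. The whole argument is thus structural, not computational.

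First I would run the proof of Lemma \ref{proj lemma 1.5} verbatim: the adjunction $\Hom(R^{G}_{L}P_2,Y)\cong\Hom(P_2,{}^*R^{G}_{L}Y)$ combined with the Mackey decomposition of ${}^*R^{G}_{L}R^{G}_{L}X_1$ for cuspidal $X_1$ shows that $X_2\cong {}^w X_1$ for some $w$ in the relative Weyl group $N_G(L)/L$. It remains to promote this $G$-conjugacy to an honest isomorphism of $\mathbbm{k}L$-modules.

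Next I would unpack what cuspidal unipotent modules of $L=\GU_{n'}(q)\times\GL_e(q^2)^{\times k}$ look like. By the classification of cuspidal unipotent modules of finite general linear groups \cite{DipperDu}, the only cuspidal unipotent $\mathbbm{k}\GL_e(q^2)$-module is $\st_e$. Hence every cuspidal unipotent $\mathbbm{k}L$-module is of the form $X_{\lambda_0}\otimes\st_e^{\otimes k}$ for some cuspidal unipotent $\mathbbm{k}\GU_{n'}(q)$-module $X_{\lambda_0}$. Write $X_i=X_{\lambda_0^{(i)}}\otimes\st_e^{\otimes k}$ for $i=1,2$.

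Then I would analyze the $N_G(L)/L$-action on such modules. Because $\GU_{n'}(q)$ is the unique factor of Dynkin type $^2A_{n'-1}$ in $L$ while the other factors have type $A_{e-1}$, the relative Weyl group must fix the $\GU_{n'}(q)$-factor setwise and permute the $k$ copies of $\GL_e(q^2)$ among themselves, possibly composed with the graph/Frobenius outer automorphism of each $\GL_e(q^2)$-factor arising from its embedding into a $\GU_{2e}(q)$-Levi of $G$. Permutations of identical tensor factors act trivially on $\st_e^{\otimes k}$, and the outer automorphism $g\mapsto F(g^{-\mathrm{tr}})$ fixes the isomorphism class of $\st_e$ (the Steinberg module is invariant under transpose-inverse and under Frobenius, since unipotent characters are rational and $\st_e$ is self-dual). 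Similarly, any outer action on the $\GU_{n'}(q)$-factor is generated by a Frobenius twist, which preserves each unipotent module $X_{\lambda_0}$ up to isomorphism. Combining these observations, the action of $N_G(L)/L$ on cuspidal unipotent $\mathbbm{k}L$-modules of the given shape is trivial, so ${}^w X_1\cong X_1$ and therefore $X_2\cong X_1$.

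The one substantive point is verifying that the relevant outer automorphisms fix the cuspidal unipotent modules of each factor in $L$; this is the only place where something could go wrong and where the rigidity of being of Dynkin type $^2A$ (as opposed to $D$, say) is really used. Everything else is bookkeeping around Lemma \ref{proj lemma 1.5}.
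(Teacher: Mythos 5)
Your proposal is correct in outline, but the final step is genuinely different from the paper's. Both arguments begin identically: Lemma \ref{proj lemma 1.5} gives $G$-conjugacy, and the uniqueness of the cuspidal unipotent $\mathbbm{k}\GL_e(q^2)$-module $\st_e$ reduces everything to comparing the $\GU_{n'}(q)$-factors $X_{\mu_1}$ and $X_{\mu_2}$. Where you then analyze the action of $N_G(L)/L\cong W(B_k)$ on isomorphism classes of cuspidal unipotent $\mathbbm{k}L$-modules and argue it is trivial, the paper instead invokes the categorical $\sle$-action of \cite{DVV1}: conjugate modules lie in the same block, blocks are weight spaces for that action, and each weight space contains at most one cuspidal, forcing $X_{\mu_1}\cong X_{\mu_2}$. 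Your route is more elementary and self-contained (no categorification needed), at the cost of having to control the relative Weyl group action; the paper's route outsources that control to a structural fact about the $\sle$-categorification it is already relying on elsewhere.

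Two small points in your argument deserve tightening, though neither is a genuine gap. First, for the $\GL_e(q^2)$-factors you do not need any discussion of self-duality or rationality: since $\st_e$ is the \emph{unique} cuspidal unipotent $\mathbbm{k}\GL_e(q^2)$-module (a fact you already use), any automorphism of $\GL_e(q^2)$ must fix its isomorphism class, because automorphisms preserve both unipotence and cuspidality. Second, when you assert that the induced automorphism of the $\GU_{n'}(q)$-factor preserves each $X_{\lambda_0}$, you should either observe that coset representatives for $N_G(L)/L$ can be chosen to centralize the middle $\GU_{n'}(q)$-block (so the induced automorphism is inner and there is nothing to check), or, if you do allow a nontrivial outer automorphism, justify the passage from invariance of the \emph{ordinary} unipotent characters to invariance of the modular unipotent modules $X_{\lambda_0}$ — this follows from unitriangularity of the decomposition matrix, but it is a step, not a triviality.
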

\begin{proof}
	By Lemma \ref{proj lemma 1.5}, $X_1$ is $G$-conjugate to $X_2$. The group $\GL_e(q^2)$ has a unique cuspidal unipotent module over $\mathbbm{k}$, namely $\st_e$. Write $X_i=X_{\mu_i}\otimes \st_e\otimes \dots \otimes \st_e$ for $\mu_i\vdash n'$, $i=1,2$. Thus $X_1\cong X_2$ if and only if $X_{\mu_1}\cong X_{\mu_2}$. 
	 This follows by uniqueness of the weight spaces of cuspidals under the categorical $\sle$-action established in \cite{DVV1}.
\end{proof}

\subsubsection{Weak branching} Fix $\iota\in\{0,1\}$ and an odd integer $e\geq 3$. The branching along the tower of groups $\GU_{2m+\iota}$, $m\in\N_0$, categorifies the $\sle$-crystal on the sum of Fock spaces $\cF_{e,\bs_t}$ corresponding to the different $2$-cores $\Delta_t$. Each Fock space corresponds to a Harish-Chandra series in characteristic $0$ and the $\sle$-crystal preserves each Fock space $\cF_{e,\bs_t}$. More precisely, consider the Levi subgroup $\GU_n(q)\times \GL_1(q^2)\subset \GU_{n+2}(q)$ and let $X_\lambda$ be a unipotent representation of $\mathbbm{k}\GU_n(q)$, $\lambda\in\cP(n)$. Then $R_{n}^{n+2}X_\lambda=\bigoplus Z_\mu$ for some indecomposable modules $Z_\mu\in\mathbbm{k}\GU_{n+2}$-mod and partitions $\mu\in\cP(n+2)$, such that $Z_\mu$ has simple head $X_\mu$ for each $Z_\mu$ appearing as a summand. Now write $\cP^\iota\subset\cP$ for the subset of all partitions of size congruent to $\iota$ mod $2$. The weak Harish-Chandra branching graph is the simple directed graph with vertices $\{\lambda\in\cP^\iota\}$ and arrows given by $\lambda\rightarrow \mu$ if and only if $|\mu|=|\lambda|+2$ and $R_{n}^{n+2}X_\lambda\twoheadrightarrow X_\mu$ \cite[Section 4.2]{GHJ}. By applying the map $\tau$ to all partitions in $\cP^{\iota}$, the weak Harish-Chandra branching graph yields a simple directed graph on $\bigoplus_{t\geq 0}\mathcal{F}_{e,\bs_t}$.
\begin{theorem}\label{weakHC crystal} \cite[Conjecture 5.7]{GHJ},\cite[Theorem 5.5]{GH},\cite[Theorem B]{DVV1}
	The weak Harish-Chandra branching graph is isomorphic to the $\sle$-crystal on $\bigoplus_{t\geq 0}\mathcal{F}_{e,\bs_t}$.
\end{theorem}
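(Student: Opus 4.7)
The plan is to construct a categorical $\widehat{\mathfrak{sl}}_e$-action on the tower $\bigoplus_{m\geq 0}\mathbbm{k}\GU_{2m+\iota}(q)\text{-mod}^{\mathrm{unip}}$ and then invoke general crystal-theoretic consequences of such actions to identify the weak Harish-Chandra branching with the combinatorial $\widehat{\mathfrak{sl}}_e$-crystal on $\bigoplus_{t\geq 0}\mathcal{F}_{e,\bs_t}$.

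First, I would refine the Harish-Chandra induction functor $R_n^{n+2}$ and its adjoint $^*R_n^{n+2}$ into eigenfunctors $F_i, E_i$ for $i\in\Z/e\Z$. The decomposition is controlled by the generalized eigenspaces of a Jucys–Murphy-style natural endomorphism of $R_n^{n+2}$ coming from the central character of the $\GL_1(q^2)$-factor in the Levi $\GU_n(q)\times\GL_1(q^2)$. Concretely, iterating $R_n^{n+2}$ produces a tower of functors whose endomorphism ring acts through an affine Hecke algebra of type $A$, and the $F_i, E_i$ are obtained by projecting onto $e$-th roots of unity with respect to the Hecke-algebra polynomial generators. At the level of the Grothendieck group, using the character formula for Harish-Chandra induction together with Lusztig's description of unipotent characters via the map $\tau$, one checks that the induced operators on $\bigoplus_t K_0(\mathbbm{k}\GU_{2m+\iota}(q)\text{-mod}^{\mathrm{unip}})$ coincide with the Chevalley generators of $\widehat{\mathfrak{sl}}_e$ acting on $\bigoplus_{t\geq 0}\mathcal{F}_{e,\bs_t}$ in the standard Fock-space basis.

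Second, I would verify that the collection $(F_i, E_i)_{i\in\Z/e\Z}$ satisfies the axioms of a Rouquier/Chuang–Rouquier $\widehat{\mathfrak{sl}}_e$-categorification: biadjointness (up to weight shifts), $\mathfrak{sl}_2$-categorification for each $i$, and the affine Hecke algebra relations governing the endomorphisms of $F^n$. The key inputs here are the modular Howlett–Lehrer comparison theorem identifying $\End(R_n^{n+2} X)$ with a cyclotomic Hecke algebra and Geck's unitriangularity of the decomposition matrix, which ensures that the categorical data on the modular side refines the corresponding structure on ordinary unipotent characters. Once this is established, the general theory assigns a crystal structure to the set of isomorphism classes of simple objects: for each $i$, the operator $\tilde f_i$ sends the class of a simple $X_\lambda$ to the unique simple quotient of $F_i X_\lambda$ with maximum $\varepsilon_i$-invariant (or $0$ if none exists). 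Because $F_i$ is by construction a direct summand of $R_n^{n+2}$, an arrow $X_\lambda\rightarrow X_\mu$ in the weak branching graph corresponds to $X_\mu$ lying in the head of some $F_i X_\lambda$ for exactly one $i$, matching the combinatorial arrow $\tau(\lambda)\to \tau(\mu)$ obtained by adding a good $i$-box.

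Third, and finally, to pin down the identification with the specific Fock space $\bigoplus_{t\geq 0}\mathcal{F}_{e,\bs_t}$ (rather than some other highest-weight $\widehat{\mathfrak{sl}}_e$-module), one compares the $2$-core decomposition of $\mathbbm{k}\GU_{2m+\iota}(q)\text{-mod}^{\mathrm{unip}}$ from Fong–Srinivasan with the decomposition of the Fock space into sectors labeled by $\Delta_t$: each ordinary Harish-Chandra series labeled by $\Delta_t$ lifts to a single $\mathcal{F}_{e,\bs_t}$-summand, and the charge $\bs_t=(-(t+1+e)/2,\,t/2)$ (or its odd-$t$ analogue) reflects the Frobenius twist intrinsic to $^2A_n$ that shifts the residues by $(e+1)/2$.

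The main obstacle is the construction and verification of the categorical $\widehat{\mathfrak{sl}}_e$-action itself. Writing down $F_i$ and $E_i$ is straightforward, but producing the required affine Hecke algebra action on iterated Harish-Chandra induction in positive characteristic—and controlling its eigenvalues to obtain the parameter $e=\mathrm{ord}(-q \bmod \ell)$—is delicate. In the linear-prime case this reduces to $\GL_n(q)$, where the needed Hecke action is classical; but in the unitary-prime case one must work geometrically with Deligne–Lusztig varieties for $\GU$ and exploit $\ell$-adic cohomology to obtain the Hecke generators, which is precisely what Dudas–Varagnolo–Vasserot achieve. The asymmetric, twisted nature of the root system of $^2A_n$ is what forces the level-$2$ Fock space and the specific charge $\bs_t$ to appear.
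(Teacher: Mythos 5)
Your proposal follows essentially the same route as the proof the paper relies on: Theorem \ref{weakHC crystal} is not proved in this paper but is quoted from Dudas--Varagnolo--Vasserot, whose argument is exactly the one you outline --- construct a categorical $\sle$-action on $\bigoplus_m \mathbbm{k}\GU_{2m+\iota}(q)\mathrm{-mod}^{\mathrm{unip}}$ splitting $R_n^{n+2}$ into $i$-induction functors $F_i$, verify the Chuang--Rouquier axioms, and read off the crystal, with the charge $\bs_t$ pinned down by the Fong--Srinivasan block/$2$-core decomposition. You correctly identify that the genuinely hard step is producing the affine Hecke algebra action on iterated Harish-Chandra induction for $^2A_n$ via Deligne--Lusztig cohomology, which is the content of \cite[Theorem A]{DVV1}.
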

The proof of Theorem \ref{weakHC crystal} in full generality was given by Dudas-Varagnolo-Vasserot \cite[Theorem B]{DVV1}. The proof proceeds by constructing a categorical action of $\sle$ on the category $\cU^\iota:=\bigoplus \mathbbm{k}\GU_{2m+\iota}(q)\mathrm{-mod}^{\mathrm{unip}}$ \cite[Theorem A]{DVV1}. 
This $\sle$-categorification breaks the Harish-Chandra induction and restriction functors into a direct sum of functors $F_i$ and $E_i$ respectively, called $i$-induction and $i$-restriction functors:
\[ R_{2n+\iota}^{2n+2+\iota}\simeq \bigoplus_{i\in\Z/e\Z}F_i,\qquad\qquad {^*R}_{2n+\iota}^{2n+2+\iota}\simeq \bigoplus_{i\in\Z/e\Z}E_i.\]
Given a unipotent $\mathbbm{k}\GU_n(q)$-module $X_\lambda$, Chuang-Rouquier's theory of categorical $\sle$-actions then implies that for each $i\in\Z/e\Z$, $F_i(X_\lambda)$ has simple head $X_{\tilde{f_i}(\lambda)}$ if $\tilde{f}_i(\lambda)\neq 0$, and otherwise $F_i(X_\lambda)=0$ \cite{ChuangRouquier}. Here, $\tilde{f}_i$ is the $\sle$-crystal operator that adds a good $i$-box. Thus the Harish-Chandra branching of simple representations along the tower  $\GU_\iota(q)\subset \GU_{2+\iota}(q)\subset\dots\subset\GU_{2m+\iota}(q)\subset\GU_{2m+2+\iota}(q)\subset\dots$ can be studied by doing combinatorics with the $\sle$-crystal.

The $\sle$-crystal can answer some questions such as when the Harish-Chandra induction of a simple representation is indecomposable in terms of combinatorics. We illustrate with some elementary statements about indecomposability of Harish-Chandra induced representations that follow immediately from Theorem \ref{weakHC crystal}:

\begin{lemma}
Suppose $X_\lambda$ is a unipotent $\mathbbm{k}\GU_n(q)$-module, $\lambda$ a partition of $n$. The following statements are true:
\begin{enumerate}
\item $R_{n}^{n+2}X_\lambda$ is indecomposable if and only if $\tau(\lambda)$ has only one good addable box. 
\item Suppose that $X_\lambda$ is weakly cuspidal. Then $R_{n}^{n+2}X_\lambda$  is semisimple with two nonisomorphic simple summands if and only if $\tau(\lambda)$ has good addable boxes of different residues.
\item 
Suppose that $X_\lambda$ is weakly cuspidal. Then $R_{n}^{n+4}X_\lambda$ is always decomposable.
\end{enumerate}
\end{lemma}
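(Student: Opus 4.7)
The plan is to exploit Theorem \ref{weakHC crystal} together with the structural decomposition $R_n^{n+2}X_\lambda = \bigoplus_\mu Z_\mu$ with each $Z_\mu$ indecomposable of simple head $X_\mu$, noting that by the theorem the $\mu$'s appearing are precisely those obtained by adding a good box to $\tau(\lambda)$. In particular, the number of indecomposable summands of $R_n^{n+2} X_\lambda$ equals the number of good addable boxes of $\tau(\lambda)$, which instantly gives part (1). For part (2), the observation that at most one good addable box per residue exists means that having two good addable boxes is automatically equivalent to having good addable boxes of two distinct residues; what remains is to upgrade the two indecomposable summands to simple summands under weak cuspidality.

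For this simplicity step in part (2), I would combine the $\sle$-categorification of \cite{DVV1}, which realizes $R_n^{n+2}\cong\bigoplus_i F_i$, with a Mackey-formula computation of $\dim\End_{\GU_{n+2}(q)}(R_n^{n+2}X_\lambda)$. Weak cuspidality gives ${}^{\ast}R_n^{n+2}X_\lambda = 0$ and $E_i X_\lambda = 0$ for every $i$; the Mackey formula applied to $L=\GU_n(q)\times\GL_1(q^2)\subset G=\GU_{n+2}(q)$, using the two double cosets of $W_L\backslash W_G/W_L$ coming from the type $B$ pair $W(B_m)\subset W(B_{m+1})$, should yield ${}^{\ast}R_n^{n+2}R_n^{n+2}X_\lambda\cong X_\lambda^{\oplus 2}$, hence $\dim\End(R_n^{n+2}X_\lambda)=2$ by adjointness. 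Combined with the self-duality of simple unipotent modules (which propagates to each indecomposable summand $Z_\mu$ being self-dual, so that its socle coincides with its head), the decomposition $\End(R_n^{n+2}X_\lambda)=\End(Z_\mu)\oplus\End(Z_{\mu'})$ forces each $Z_\mu$ to be simple.

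For part (3), I would use transitivity $R_n^{n+4}\cong R_{n+2}^{n+4}\circ R_n^{n+2}$ and split on the number of good addable boxes of $\tau(\lambda)$. If there are at least two, $R_n^{n+2}X_\lambda$ already has at least two nonzero indecomposable summands, each of which remains nonzero under $R_{n+2}^{n+4}$ because every bipartition has at least one good addable box, so $R_n^{n+4}X_\lambda$ is decomposable. If there is exactly one good addable box $b$ of residue $i$ at $\tau(\lambda)$, then $R_n^{n+2}X_\lambda = Z_\mu$ is indecomposable with simple head $X_\mu$ where $\tau(\mu)=\tau(\lambda)+b$; the goal is then to show $\tau(\mu)$ has at least two good addable boxes. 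I would track residues via the abacus: adding the bead for $b$ creates new potential corners of residues $i-1$ (below $b$) and $i+1$ (to the right of $b$), and since $\tau(\lambda)$ is a source of the $\sle$-crystal with a single good addable box of residue $i$, its $j$-signatures for $j\neq i$ cancel completely, so the two new $+$'s contributed to the $(i\pm 1)$-signatures of $\tau(\mu)$ survive as good addable boxes of distinct residues (using $e\geq 3$). The head of $R_{n+2}^{n+4}X_\mu$ then has at least two non-isomorphic simple summands, and since $R_{n+2}^{n+4}Z_\mu\twoheadrightarrow R_{n+2}^{n+4}X_\mu$, the head of $R_n^{n+4}X_\lambda=R_{n+2}^{n+4}Z_\mu$ contains them as a direct summand, giving decomposability.

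The main obstacle lies in verifying the two nontrivial inputs: the $\dim\End = 2$ claim in part (2), which requires an explicit Mackey calculation in the twisted finite unitary setting, and the signature bookkeeping in the exceptional case of part (3), which must control any pre-existing addable or removable $(i\pm 1)$-boxes of $\tau(\lambda)$ whose cancellation structure could interfere with the new $+$'s coming from the corners of $b$.
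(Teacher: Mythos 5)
Your proposal is correct in substance, and for parts (1) and (3) it follows essentially the same route as the paper: part (1) is exactly the observation that $R_n^{n+2}\simeq\bigoplus_i F_i$ with each $F_i(X_\lambda)$ zero or indecomposable with simple head, and part (3) is the same transitivity argument reducing to the case of a single good addable $i$-box $b$. Where you genuinely diverge is part (2). The paper simply quotes \cite[Theorem 3.2]{GHJ}, which identifies $\End(R_n^{n+2}X_\lambda)$ for weakly cuspidal $X_\lambda$ with a Hecke algebra of type $B_1$, and then invokes the dichotomy of \cite[Lemma 3.15]{GHM2} (indecomposable, or semisimple with two nonisomorphic simple summands). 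You instead re-derive the dimension count $\dim\End=2$ from the Mackey formula and then force simplicity of each summand via self-duality; this is more self-contained but is essentially a reproof of the cited results, and the place where weak cuspidality enters your computation (killing the Mackey terms indexed by double cosets with $L\cap{}^wL$ a proper Levi of $\GU_n(q)\times\GL_1(q^2)$, since ${}^*R^{\GU_n(q)}_{\GU_{n-2}(q)\times\GL_1(q^2)}X_\lambda=0$) is exactly the hypothesis under which \cite{GHJ} prove their Theorem 3.2. The self-duality step does close the gap from $\dim\End(Z_\mu)=1$ to $Z_\mu$ simple, since an indecomposable non-simple module with isomorphic simple head and socle carries a nonzero nilpotent endomorphism.

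Two small cautions on part (3). First, your phrase ``new potential corners of residues $i\pm 1$'' is loose: the boxes below and to the right of $b$ need not literally become addable. What saves the argument is the weight computation you implicitly use: adding an $i$-box increases $N^{\mathrm{add}}_{j}-N^{\mathrm{rem}}_{j}$ by $1$ for $j=i\pm 1$ (here $e\geq 3$ is needed so that $i-1\neq i+1$), and since weak cuspidality plus uniqueness of the good addable box force the reduced $j$-signature of $\tau(\lambda)$ to be empty for all $j\neq i$, the reduced $(i\pm1)$-signatures of $\tau(\lambda)\cup b$ each contain a surviving $+$; the ``interference'' you worry about in your last paragraph cannot occur because the excess of $+$ over $-$ in the reduced signature equals the excess in the full signature. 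Second, note that the paper's version of this step produces good addable boxes of residues $i$ (the second leftover $+$ of residue $i$) and $i-1$, rather than $i-1$ and $i+1$; either pair of distinct residues suffices to conclude via part (1).
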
\begin{proof} (1) Immediate since $R_{n}^{n+2}\simeq\bigoplus_{i\in \Z/e\Z}\mathrm{F}_i$ and $\mathrm{F}_i(X_\lambda)$ has a simple head or is $0$ for each $i\in \Z/e\Z$.
(2) By \cite[Theorem 3.2]{GHJ} we know that $\End R_n^{n+2m}X_\lambda$ is a Hecke algebra of type $B_m$. Taking $m=1$ yields the analogue of \cite[Lemma 3.15]{GHM2}: either $R_{n}^{n+2}X_\lambda$ is indecomposable (and not simple) or it is semisimple with two nonisomorphic simple summands. We have just rephrased this statement in terms of the charged bipartition $\tau(\lambda)$, applying part (1).
 (3) Harish-Chandra induction is transitive, so $R_n^{n+4}X_\lambda=R_{n+2}^{n+4}R_n^{n+2}X_\lambda$ so for $R_{n}^{n+4}X_\lambda$ to be indecomposable, $R_n^{n+2}X_\lambda$ needs to be indecomposable. Now observe that the number of addable boxes minus the number of removable boxes of any partition is equal to $1$. In a bipartition there are therefore two more addable boxes than removable boxes. Since $\tau(\lambda)$ has only one good addable box and $X_\lambda$ is weakly cuspidal, every removable box in $\tau(\lambda)$ cancels some addable box in the Kashiwara $i$-words of $\tau(\lambda)$, and the two leftover addable boxes, call them $b_1$ and $b_2$, have the same residue $i$ mod $e$. The good addable $i$-box is the box among $b_1$, $b_2$ which is larger in the order we have put on charged bipartitions; say it is $b_1$. Now we add $b_1$ to $\lambda$. Now $b_2$ is a good addable $i$-box for $\tau(\lambda)\cup b_1$. On the other hand, $\tau(\lambda)\cup b_1$ has a new addable $(i-1)$-box, right below $b_1$. This forces $\tau(\lambda)\cup b$ to have a good addable $(i-1)$-box, since the number of $+$'s in the $(i-1)$-word must now outstrip the number of $-$'s. Thus $\tau(\lambda)\cup b_1$ has good addable boxes of different residues, so by part (1), $R_n^{n+4} X_\lambda$ is decomposable. 
\end{proof}
With much more work and attention to the details of the combinatorics, a better version of statement (1) can be proved: in fact, if $R_n^{n+2}X_\lambda$ is decomposable, then any two non-isomorphic simple submodules of it lie in different blocks \cite[Corollary 7.9]{GHJ}.

\subsection{The combinatorial classification of cuspidals} We continue with the same assumptions about $e$ and $\ell$, in particular $e$ is odd and at least $3$. 
\begin{theorem}\label{dvv cusp} \cite[Theorem 5.10]{DVV2} Let $\lambda\in\cP$ and let $\Delta_t$ be its $2$-core. The unipotent $\mathbbm{k}\GU_n(q)$-module $X_\lambda$ is cuspidal if and only if the charged bipartition $\tau(\lambda)$ is a source vertex of both the $\sle$- and $\slinf$-crystals on $\mathcal{F}_{e,\bs_t}$.
\end{theorem}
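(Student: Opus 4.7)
The plan is to prove both directions separately, reducing each to combinatorial statements about the two crystals. The forward direction will be essentially a corollary of Theorem \ref{weakHC crystal} combined with a new analysis of Harish-Chandra restriction to Levis containing a $\GL_e(q^2)$ factor; the backward direction is the substantive one and requires reconstructing enough of a ``Heisenberg-type'' categorical structure to interpret the $\slinf$-crystal edges in terms of finite unitary groups.

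\textbf{Forward direction.} Assume $X_\lambda$ is cuspidal. Then in particular ${^*R}^{\GU_n(q)}_{\GU_{n-2}(q)\times \GL_1(q^2)}(X_\lambda)=0$, so $X_\lambda$ is weakly cuspidal. Under the categorical $\sle$-action of \cite{DVV1} used in Theorem \ref{weakHC crystal}, this means $E_i X_\lambda=0$ for every $i\in\Z/e\Z$, so $\tau(\lambda)$ has no good removable box and is therefore a source vertex of the $\sle$-crystal. Since the only cuspidal unipotent $\mathbbm{k}\GL_a(q^2)$-modules (for $\ell>n/e$) are $\st_1$ when $a=1$ and $\st_e$ when $a=e$, it remains to show that ${^*R}^{\GU_n(q)}_{\GU_{n-2e}(q)\times \GL_e(q^2)}(X_\lambda)$ has no summand of the form $X_\mu\otimes \st_e$. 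I would do this by showing the contrapositive: if $\tau(\mu)\to\tau(\lambda)$ is an arrow in the $\slinf$-crystal, then $X_\lambda$ lies in the head of $R^{\GU_n(q)}_{\GU_{n-2e}(q)\times \GL_e(q^2)}(X_\mu\otimes \st_e)$, contradicting cuspidality.

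\textbf{Backward direction.} Assume $\tau(\lambda)$ is a source of both crystals. The $\sle$-source condition together with Theorem \ref{weakHC crystal} rules out $X_\lambda$ appearing in the head of any $R^{\GU_n(q)}_{L}(Y)$ with $L=\GU_{n'}(q)\times \GL_1(q^2)^{\times r}$ and $Y$ cuspidal. For Levi subgroups containing $\GL_e(q^2)$ factors, I would use transitivity of Harish-Chandra induction to reduce to the case $L=\GU_{n-2e}(q)\times \GL_e(q^2)$ with cuspidal support $X_\mu\otimes \st_e$, then use biadjointness of $R$ and ${^*R}$ to reduce to showing
\[
\Hom_{\mathbbm{k}L}\bigl(X_\mu\otimes \st_e,\; {^*R}^{\GU_n(q)}_{L}(X_\lambda)\bigr)=0.
\]
The $\slinf$-source condition, characterized combinatorially by the pattern-avoidance of Theorem \ref{slinf source}, should force this vanishing. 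Combinatorially, any nonzero such Hom would produce a charged bipartition $\tau(\mu)$ from which $\tau(\lambda)$ is reachable by moving an $e$-period to the right, contradicting the pattern-avoidance characterization.

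\textbf{Main obstacle.} The real difficulty is establishing the dictionary ``arrows in the $\slinf$-crystal $\leftrightarrow$ composition factors of ${^*R}(-)$ with $\st_e$ component,'' since in the Cherednik setting this rests on a full categorical Heisenberg action (Shan--Vasserot, Losev) that has not been constructed for finite unitary groups. My approach would be to work at the level of the Grothendieck group and characters: compute $R^{\GU_n(q)}_{L}(Y_\mu\otimes \st_e)$ on ordinary characters via the Lusztig induction formula (where it takes a clean bead-shifting form), then pass to the $\mathbbm{k}$-linear setting using unitriangularity of the decomposition matrix (\cite{Geck}, and for general types \cite{BDT}) together with the square structure of unipotent characters and the fact that $\st_e$ is the unipotent part of a projective character. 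This matches the bead-shift combinatorics on abaci to composition multiplicities modulo triangular error terms, and induction on the depth in the $\slinf$-component (each of which is isomorphic to Young's lattice by \cite{Gerber2}) controls the error.

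\textbf{Handling non-first $e$-periods.} A delicate point is that $\slinf$-edges can shift any $e$-period $P_k$, not just $P_1$, and only $P_1$ admits a direct interpretation as the ``largest'' addable vertical $e$-strip. Here I would exploit the commutativity of the $\sle$- and $\slinf$-crystals (\cite{Losev},\cite{Gerber1}): by iteratively applying $\tilde e_i$ operators (categorified by the $E_i$), one can expose $P_k$ as the first $e$-period of a shifted configuration, apply the $k=1$ case, and then retrace the sequence of $F_i$'s to return to the original charged bipartition. Thus the $k=1$ case, together with the known categorical $\sle$-action, bootstraps to the general $\slinf$-crystal edge, completing both directions.
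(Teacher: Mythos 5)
First, a framing point: the paper does not prove this statement. Theorem \ref{dvv cusp} is imported verbatim from \cite[Theorem 5.10]{DVV2}, whose proof constructs a version of the categorical Heisenberg action; the present paper deliberately takes the classification of cuspidals as an external input to Theorem \ref{conj} precisely because that construction is unavailable here. So your attempt must be judged as a proposed reproof, and against that standard it has a genuine gap. Your forward direction is essentially recoverable by the elementary methods the paper does use elsewhere: Theorem \ref{projectives thm} identifies $\tilde{a}_{1^{ek}}(\mu)$ as the dominance-maximal unipotent constituent of $[R(P_\mu\otimes P_{\st_{ek}})]$, unitriangularity then forces $P_{\tilde{a}_{1^{ek}}(\mu)}$ to be a direct summand, and Lemma \ref{proj lemma 1} places the cuspidal support of its head on a proper Levi --- so a weak cuspidal of positive $\slinf$-depth is not cuspidal. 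Note this is weaker than, and does not require, your claim that $X_\lambda$ lies in the head of $R(X_\mu\otimes\st_e)$ itself.

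The backward direction is where the proposal breaks down. The reduction via adjunction to $\Hom_{\mathbbm{k}L}\bigl(X_\mu\otimes\st_e^{\otimes k},\,{^*R}^{\GU_n(q)}_{L}(X_\lambda)\bigr)=0$ is fine, but the asserted dictionary --- that a nonzero such Hom would produce an $\slinf$-arrow into $\tau(\lambda)$ --- is exactly the statement whose only known proof goes through the Heisenberg categorification of \cite{ShanVasserot},\cite{DVV2}; it cannot be extracted from pattern avoidance plus character computations. The unitriangularity mechanism is intrinsically one-sided: the coefficient of the dominance-maximal constituent of a projective character certifies that a particular PIM \emph{is} a summand, but certifying that $X_\lambda$ does \emph{not} occur in the head of $R(X_\mu\otimes\st_e^{\otimes k})$ would require separating the contribution of $P_\lambda$ from those of the other PIMs occurring in $[R(P_\mu\otimes P_{\st_e}^{\otimes k})]$, i.e.\ knowledge of the decomposition matrix (and of $\overline{P}_\mu$ beyond its leading term), none of which is available. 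Your device for non-first $e$-periods also fails as stated: the relevant $\tau(\lambda)$ is already a source of the $\sle$-crystal, so every $\tilde{e}_i$ annihilates it and there are no good boxes to remove in order to ``expose $P_k$ as the first period''; commutativity of the two crystals does not convert an interior $e$-period into $P_1$.
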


Using Theorems \ref{tot per} and \ref{slinf source}, we can check if $X_\lambda$ is cuspidal by checking (i) if the abacus $\cA|\tau(\lambda),\bs_t\rangle$ is totally $e$-periodic, making it a source vertex of the $\sle$-crystal and (ii) if $\cA|\tau(\lambda),\bs_t\rangle$ satisfies the pattern avoidance condition that makes it a source vertex of the $\slinf$-crystal. In practice, we usually eyeball cuspidality by drawing the $e$-periods; if no beads are left out and all the $e$-periods are jammed together so that none of them can even slide left, then Theorems \ref{tot per} and \ref{slinf source} guarantee that the abacus labels a cuspidal. It is only the completely horizontal $e$-periods that have to be dealt with carefully. 
\begin{remark}Level-rank duality offers an elegant alternative way to check cuspidality: $|\bla,\bs\rangle$ is a source vertex of both the $\sle$- and $\slinf$-crystals if and only if its level-rank dual is a FLOTW $e$-partition \cite[Theorem 7.7]{Gerber1}.\end{remark}

Theorem \ref{dvv cusp} and the results of \cite{GerberN} allow us to upgrade some of the results of \cite{GHJ} on weak cuspidals as statements about actual cuspidals. For example,  \cite[Proposition 7.5]{GHJ} determines when $1^n$ labels a weak cuspidal, and it turns out by comparing with \cite[Theorem 8.3]{GHM1} that $X_{1^n}$ is weakly cuspidal if and only if it is cuspidal. We may now generalize this result to determine when $\lambda$ labels a cuspidal for any $\lambda\in\cP$ such that $\tau(\lambda)=|\emp.1^m,\bs_t\rangle$ or $\tau(\lambda)=|1^m.\emp,\bs_t\rangle$, $t\in\N_0$. Let us introduce the following notation for concatenating partitions: if $\lambda=(\lambda_1,\lambda_2,\dots,\lambda_r),\;\mu=(\mu_1,\mu_2,\dots,\mu_s)\in\cP$ then by $\lambda\sqcup\mu$ we denote the partition with parts $(\lambda_1,\lambda_2,\dots,\lambda_r,\mu_1,\mu_2,\dots,\mu_s)$, rearranged if necessary so that the parts are non-increasing when read from left to right. For example, if $\lambda=\Delta_3=(3,2,1)$ and $\mu=(1^4)$ then $\lambda\sqcup\mu=(3,2,1^5)$ is just given by stacking $\mu$ below $\lambda$.

\begin{theorem}\label{cusp column}
	\begin{enumerate}
		\item Let $\nu=\Delta_t\sqcup(1^{2m})$ for $m\geq 0$. Then $X_\nu$ is cuspidal if and only if $e\mid m$ or $e\mid 2(t+m)-1$.
		\item Let $t\geq 2$ and $\nu=\Delta_t\sqcup(1^{2m+1})$ for $m\geq0$. Then $X_\nu$ is cuspidal if and only if (i) $e\mid 2m+1$, or (ii) $e\mid m+t$ and $2m+1\geq e$.
	\end{enumerate}
\end{theorem}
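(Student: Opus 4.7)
The plan is to invoke Theorem~\ref{dvv cusp}, reducing cuspidality of $X_\nu$ to a joint check: $\tau(\nu)$ must be a source vertex of both the $\sle$-crystal (equivalently, by Theorem~\ref{tot per}, its abacus is totally $e$-periodic) and the $\slinf$-crystal (equivalently, by Theorem~\ref{slinf source}, its abacus avoids the $e+1$ listed patterns in the region bounded to the right by the first fore period). I would then translate these combinatorial conditions into the stated arithmetic on $t$, $m$, and $e$.

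A direct calculation from the $\beta$-set of $\nu$, splitting by parity, determines the twisted $2$-quotient in closed form. In case~(1), $\nu$ has $2$-core $\Delta_t$ and (uniformly across the parity of $t$)
\[\tau(\nu) \;=\; |\,1^m.\,\emptyset,\,\bs_t\rangle,\qquad s_2-s_1 \;=\; t + (e+1)/2.\]
In case~(2), attaching the odd column $(1^{2m+1})$ drops the $2$-core to $\Delta_{t-2}$ (requiring $t\geq 2$) and yields
\[\tau(\nu) \;=\; |\,\emptyset.\,1^{t+m},\,\bs_{t-2}\rangle,\qquad s_2-s_1 \;=\; (t-2) + (e+1)/2.\]
In both cases the abacus has one row equal to the empty configuration and the other row deviating from empty by a single gap at a computable column.

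Next, I would handle the $\slinf$-source condition. In case~(1), row $2$ is entirely full throughout the region bounded by the first fore period, so every forbidden pattern of Theorem~\ref{slinf source}, each of which demands a column with no row-$2$ bead, is automatically excluded; thus pattern avoidance is free. In case~(2) the roles swap: the unique row-$2$ gap produces a forbidden ``$(1)$''-pattern (an empty column) exactly when the gap column $s_2-(t+m)+1$ lies above row~$1$'s extent, which happens iff $2m+1 < e$; conversely, when $2m+1 \geq e$ the gap sits below $s_1$ and the resulting ``bottom-only'' column lies strictly to the left of every ``top-only'' column, so none of the patterns (2)\text{--}($e+1$) (which require a ``top-only'' column to the \emph{left} of a ``bottom-only'' column) can occur. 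Consequently the $\slinf$-source condition reduces to $2m+1 \geq e$ in case~(2), which is a consequence of either divisibility condition stated there.

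Finally, and this is the technical core, I would analyze total $e$-periodicity by peeling $e$-periods from the rightmost bead $(s_2,2)$. Horizontal row-$2$ periods strip cleanly from the right and reduce the shift to the residue $r := (s_2-s_1) \bmod e$; the next $e$-period is mixed, transitioning into row $1$ at the extra bead at column $s_1+1$, and is legal iff the row-$1$ gap at $s_1-m+1$ does not land in its row-$1$ support. Tracking the iteration, the abacus becomes totally $e$-periodic precisely when the gap is eventually absorbed, which occurs in exactly two ways: (a) the gap coincides with the starting column of a pure horizontal row-$1$ period once both rows have equalized, forcing $e\mid m$ in case~(1) and $e\mid 2m+1$ in case~(2); or (b) the gap is swallowed by a mixed period at its transition point, forcing $e\mid 2(t+m)-1$ in case~(1) and $e\mid m+t$ in case~(2). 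The principal obstacle is verifying that these two scenarios are mutually exclusive and exhaust all solutions; this is cleanest via a recursive reduction of the form $(t,m)\mapsto(t,m-e)$, which peels off a block of consecutive $e$-periods returning an abacus of identical shape with smaller $m$, whittling the problem down to a finite set of base cases indexed by $m \bmod e$, in which only the two listed residue classes can yield a cuspidal module.
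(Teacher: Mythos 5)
Your reduction is the same as the paper's --- both rest on Theorem \ref{dvv cusp}, and both compute the same twisted $2$-quotients, $\tau(\nu)=|1^m.\emp,\bs_t\rangle$ in case (1) and $|\emp.1^{t+m},\bs_{t-2}\rangle$ in case (2) --- but from that point the routes diverge. The paper does essentially no abacus analysis: it quotes \cite[Corollary 7.5]{GerberN}, which already classifies the charges for which $|1^m.\emp,\bs\rangle$ is a source vertex of both crystals, and for part (2) it first applies the crystal isomorphism $|\lambda^1.\lambda^2,(s_1,s_2)\rangle\simeq|\lambda^2.\lambda^1,(s_2,s_1+e)\rangle$ to land back in that one-column situation, so the whole proof is a translation of the arithmetic in the cited corollary. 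You instead re-derive the classification directly from Theorems \ref{tot per} and \ref{slinf source}, and you treat $|\emp.1^{t+m},\bs_{t-2}\rangle$ head-on rather than via the swap. The parts you execute are correct: every forbidden pattern in Theorem \ref{slinf source} does require a column with no row-$2$ bead, so pattern avoidance is automatic in case (1); in case (2) the unique row-$2$ gap sits at column $s_2-(t+m)+1$, which exceeds $s_1$ exactly when $2m+1<e$, and when $2m+1\geq e$ the lone bottom-only column lies left of every top-only column, so the $\slinf$-condition is exactly $2m+1\geq e$. This cleanly isolates --- more transparently than the paper's citation does --- which clause of the final answer comes from which crystal: the inequality in (2)(ii) is purely the $\slinf$-source condition, the divisibilities are purely total $e$-periodicity. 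What your route costs is that the entire content of \cite[Corollary 7.5]{GerberN}, namely the classification of when these abaci are totally $e$-periodic, must now be proved by hand; your last paragraph is a plausible sketch (peel horizontal row-$2$ periods until the period ranges reach row $1$, then track whether the single row-$1$ gap is absorbed), and the two residue classes you extract agree with the known answer, but the exhaustiveness of your two scenarios and the correctness of the proposed recursion $(t,m)\mapsto(t,m-e)$ are exactly where the work lives and are not yet done. Carrying that out yields a self-contained proof relying on \cite{GerberN} only for the two source-vertex criteria; otherwise you should simply cite \cite[Corollary 7.5]{GerberN} as the paper does.
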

\begin{proof}
	By \cite[Corollary 7.5]{GerberN}, if $\bs=(s_1,s_2)\in\Z^2$ is any charge, it holds that $|1^m.\emp,\bs\rangle$ is a source vertex for both the $\sle$- and $\slinf$-crystals if and only if (i) $s_2-s_1=ke-m+1$ for some $k\in\N$, or (ii) $e\mid m$ and $s_2-s_1\geq e-m+1$. If $\nu=\Delta_t\sqcup (1^{2m})$ for some $m\geq 0$ then $\tau(\nu)=|1^m.\emp,\bs_t\rangle$, where $\bs_t=(s_1,s_2)$ satisfies $s_2-s_1=\frac{e+1}{2}+t$. Then case (i) holds if and only if $e\mid\frac{e+1}{2}+t+m-1$, which is the case if and only if $e\mid 2(t+m)-1$ since $e$ is odd. Case (ii) holds if and only if $e\mid m $ and $2(t+m)\geq e+1$; the latter condition is superfluous since $e\geq 3$. This proves part (1).
	
	To prove part (2), we use the crystal isomorphism $|\lambda^1.\lambda^2,(s_1,s_2)\rangle \simeq |\lambda^2.\lambda^1,(s_2,s_1+e)\rangle$.\footnote{This isomorphism is induced by the ``twist by a character" isomorphism between type $B$ rational Cherednik algebras with parameters $(\frac{1}{e}, -\frac{1}{2}+\frac{s_2-s_1}{e})$ and $(\frac{1}{e},-\left(-\frac{1}{2}+\frac{s_2-s_1}{e}\right))$, which exchanges the simple modules labeled by $\lambda^1.\lambda^2$ and $\lambda^2.\lambda^1$ \cite[Section 2.3.4]{Losev}.}
	 If $t\geq 2$ and $\nu=\Delta_t\sqcup(1^{2m+1})$, then the $2$-core of $\nu$ is $\Delta_{t-2}$ and \[\tau(\nu)=|\emp.1^{t+m},\bs_{t-2}\rangle\simeq |\emp.1^{t+m},(0,\frac{e+1}{2}+t-2)\rangle\simeq
	|1^{t+m}.\emp, (\frac{e+1}{2}+t-2,e)\rangle.
	\]
	Write $m'=m+t$ and $t'=t-2$, so that (normalizing the charge) $$\tau(\nu)\simeq|1^{m'}.\emp,(0,\frac{e-1}{2}-t')\rangle.$$
	Then applying \cite[Cor. 7.5]{GerberN} yields that $\tau(\nu)$ is a source vertex of the $\sle$- and $\slinf$-crystals if and only if (i) $e\mid 2(m'-t')-3$ or (ii) $e\mid m'$ and $2m'-2t'-3\geq e$.
\end{proof}
\begin{remark}
	Theorem \ref{cusp column} is consistent with \cite[Proposition 7.5]{GHJ} and \cite[Theorem 8.3]{GHM1} which says that $X_{1^n}$ is cuspidal if and only if $e$ is odd and divides $n$ or $n-1$. In our conventions, $\tau(1^n)=\tau(1^{2m+\iota})=|1^m.\emp, \bs_{\iota}\rangle$ for $\iota\in\{0,1\}$. This falls under the purview of part (1) of Theorem \ref{cusp column} with $t=\iota$. It is trivial to check that when $t=0$ or $1$ then part (1) of Theorem \ref{cusp column} is equivalent to $e\mid n$ or $e\mid n-1$.
\end{remark}

In \cite[Conjecture 5.5]{GHJ}, which is shown in \cite[Theorem 7.6]{GHJ} to follow from \cite[Conjecture 5.7]{GHJ} (now \cite[Theorem B]{DVV1}), it is stated that if a unipotent block of $\mathbbm{k}\GU_n(q)$ contains a weak cuspidal, then it contains an ordinary cuspidal character (the $\ell$-reduction of a cuspidal irreducible character in characteristic $0$, not necessarily unipotent). The $\ell$-reduction of the latter will still be cuspidal and therefore must have simple constituents that are cuspidal unipotent $\mathbbm{k}\GU_n(q)$-representations. This shows that a unipotent block of $\mathbbm{k}\GU_n$ containing a weak cuspidal simple module contains a cuspidal simple module.  
Here we give a different proof of the latter statement using only facts about the $\sle$- and $\slinf$-crystals on level $2$ Fock spaces. Because the classification given by Theorem \ref{dvv cusp}  of cuspidals for finite unitary groups in a given characteristic $0$ Harish-Chandra series $\Delta_t$is identical to the classification of cuspidals for a cyclotomic rational Cherednik algebra with parameters given by the same Fock space $\cF_{e,\bs_t}$ \cite[Proposition 6.2]{ShanVasserot}, we can use an argument coming from derived equivalences of a special kind, namely perverse equivalences, between Cherednik algebras with different parameters \cite{Losev}. The underlying combinatorics then gives results about cuspidal unipotent representations of finite unitary groups.
\begin{theorem}\label{blocks with cuspidals}
Suppose $X_\lambda$ is a weakly cuspidal unipotent  $\mathbbm{k}\GU_{n}(q)$-module. Then there exists a cuspidal unipotent $\mathbbm{k}\GU_{n}(q)$-module $X_\mu$ in the same block as $X_\lambda$ such that $\lambda$ and $\mu$ belong to the same Harish-Chandra series in characteristic $0$, that is, $\lambda$ and $\mu$ have the same $2$-core.
\end{theorem}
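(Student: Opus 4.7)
The plan is to travel within the $\slinf$-crystal on $\cF_{e,\bs_t}$ from $\tau(\lambda)$ backwards to the unique source vertex of its connected component, and to verify that the resulting partition enjoys all three properties we want.

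Let $\Delta_t$ be the $2$-core of $\lambda$, so that $\tau(\lambda)\in\cF_{e,\bs_t}$. By Theorem~\ref{weakHC crystal}, the hypothesis that $X_\lambda$ is weakly cuspidal translates to $\tau(\lambda)$ being a source vertex of the $\sle$-crystal on $\cF_{e,\bs_t}$, which by Theorem~\ref{tot per} is equivalent to saying that the abacus $\cA|\tau(\lambda),\bs_t\rangle$ is totally $e$-periodic. In particular $\tau(\lambda)$ lies in the subspace $\cF_{e,\bs_t}^{\mathrm{per}}$, so the $\slinf$-crystal of Definition~\ref{slinf tot per} applies. Let $\tau(\mu)$ denote the unique source vertex of the connected component of the $\slinf$-crystal on $\cF_{e,\bs_t}^{\mathrm{per}}$ containing $\tau(\lambda)$; its existence and uniqueness are recalled immediately after Definition~\ref{slinf tot per}. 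By construction $\tau(\mu)$ has charge $\bs_t$, so $\mu$ has $2$-core $\Delta_t$, i.e.\ $\lambda$ and $\mu$ lie in the same ordinary Harish-Chandra series.

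It remains to check that $\mu$ lies in the same $\ell$-modular block as $\lambda$ and that $X_\mu$ is cuspidal. Writing $\tau(\lambda)=\tilde{a}_\sigma(\tau(\mu))$ for the appropriate $\sigma\in\cP$ provided by the Young-graph description of $\slinf$-components, Lemma~\ref{blocks slinf} applied to each individual step along this path shows that $\lambda$ and $\mu$ share the same $e$-core; the Fong--Srinivasan classification of unipotent blocks then forces $X_\lambda$ and $X_\mu$ to be in the same block. For cuspidality, I will verify that $\tau(\mu)$ is still a source vertex of the $\sle$-crystal. Since $\slinf$-arrows in Definition~\ref{slinf tot per} are defined entirely inside $\cF_{e,\bs_t}^{\mathrm{per}}$ (the ``new'' abacus after shifting $P_k$ is again totally $e$-periodic, by the very requirement that the shifted period coincides with the $k$'th $e$-period of the target), any vertex in the $\slinf$-component of $\tau(\lambda)$---including the source $\tau(\mu)$---has a totally $e$-periodic abacus. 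By Theorem~\ref{tot per}, $\tau(\mu)$ is thus a source vertex of the $\sle$-crystal as well, and by Theorem~\ref{dvv cusp}, $X_\mu$ is cuspidal.

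The potential obstacle is the last bullet, namely confirming that $\slinf$-arrows cannot leave the totally $e$-periodic stratum, or equivalently that tracing them backwards preserves total $e$-periodicity. This is not a computation one needs to do by hand here, since it is already encoded in the Definition~\ref{slinf tot per} restricted to $\cF_{e,\bs_t}^{\mathrm{per}}$; alternatively, one can invoke the commuting crystals property by noting that any incoming $\sle$-arrow to $\tau(\mu)$ would, by commuting past the $\slinf$-arrows down to $\tau(\lambda)$, produce an incoming $\sle$-arrow at $\tau(\lambda)$ as well, contradicting weak cuspidality. Either way the proof is complete.
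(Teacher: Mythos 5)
There is a genuine gap, and it is the central difficulty of the theorem. The arrows of the $\slinf$-crystal change the size of the bipartition: each arrow adds a vertical strip of $e$ boxes, so the source vertex $\tau(\mu)$ of the $\slinf$-component containing $\tau(\lambda)$ is a bipartition of size $|\tau(\lambda)|-ke$, where $k$ is the depth of $\tau(\lambda)$. Hence your $\mu$ is a partition of $n-2ke$, not of $n$, and $X_\mu$ is a unipotent module of $\GU_{n-2ke}(q)$ rather than of $\GU_n(q)$. The conclusion ``$X_\mu$ is in the same block as $X_\lambda$'' is then meaningless (they are modules over different group algebras), and the appeal to Fong--Srinivasan does not apply: Lemma \ref{blocks slinf} does show the $e$-cores agree, but equality of $e$-cores only pins down the block once the two partitions have the same size $n$. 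What your argument actually establishes is the (much easier, and already known from Theorem \ref{dvv cusp}) fact that the $\slinf$-source of a weak cuspidal labels a cuspidal of a \emph{smaller} unitary group; the theorem asserts the existence of a cuspidal of $\GU_n(q)$ itself in the block of $X_\lambda$.

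The paper's proof is built precisely to repair this size discrepancy. It passes from $|\bla_0,\bs_t\rangle$ (the doubly-source vertex below $\tau(\lambda)$) through a sequence of combinatorial wall-crossings to an asymptotic charge, where a bipartition is a source of both crystals exactly when it has the form $\nu.\emp$ with a constraint on the good removable box of $\nu$; there one appends a column $1^{ek}$ to $\nu$ to recover a bipartition $\tilde\nu.\emp$ of the full size $m=|\tau(\lambda)|$ that is still a source of both crystals, and then wall-crosses back to obtain $\bmu$ of size $m$ with $\tau^{-1}|\bmu,\bs_t\rangle\vdash n$. The block comparison is then carried out by matching content multisets (``combinatorial blocks''), which is legitimate because the two partitions now have equal size. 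Your cuspidality argument for the source vertex (total $e$-periodicity is preserved along $\slinf$-arrows, plus commutation of the two crystals) is fine as far as it goes, but it cannot be salvaged into a proof of the stated theorem without some device, like the wall-crossing construction, that produces a source vertex of both crystals of the \emph{same} size as $\tau(\lambda)$.
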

\begin{proof}
Write $n=2m+\iota$, $\iota\in\{0,1\}$ and $m\in\N_0$. 
If $X_\lambda$ is weakly cuspidal, then $\tau(\lambda)=|\bla,\bs_t\rangle$ is a source vertex of the $\sle$-crystal. We may write $\bla=\tilde{a}_\sigma(\bla_0)$ for some bipartition $\bla_0$ such that $|\bla_0,\bs_t\rangle$ is a source vertex of the $\slinf$-crystal as well as the $\sle$-crystal, and $\sigma\in\cP(k)$ where $k$ is the depth of $\bla$ in the $\slinf$-crystal. Thus $|\bla|=m=|\bla_0|+ke$. Now we apply a succession of combinatorial wall-crossing bijections as defined in \cite{Losev}, \cite{JL2} to $|\bla_0,\bs_t\rangle$ to reach an ``asymptotic chamber" where the components $s_1$ and $s_2$ of the charge are sufficiently far apart. The important property of the combinatorial wall-crossing that we need is that it is an isomorphism of the $\sle$- and $\slinf$-crystals that permutes the set of bipartitions of size $m$ \cite[Proposition 1.2]{Losev}. We may write $\Phi$ for the combinatorial wall-crossing across a single wall such that $\Phi|\bmu,(s_1,s_2)\rangle=|\phi(\bmu),(s_1,s_2+e)\rangle$, where $\phi:\cP^{(2)}(m)\rightarrow\cP^{(2)}(m)$ is the bijective map given by the combinatorial wall-crossing. Write $\Phi^N:=\Phi\circ\Phi\circ\dots\circ\Phi$.
We have  $\Phi^N|\bla_0,\bs\rangle=|\phi_N(\bla_0),(s_1,s_2+Ne)\rangle$ where we choose $N$ such that $s_2+Ne-s_1>m$ and $\phi_N:\cP^{(2)}(m)\rightarrow\cP^{(2)}(m)$ is the bijective map on bipartitions given by $\Phi^N$. 
 Then $\Phi^N|\bla_0,(s_1,s_2+Ne)\rangle$ is a source vertex of both the $\sle$- and $\slinf$-crystals.

 A charge $\bt=(t_1,t_2)$ is asymptotic for bipartitions of size $m$ if $t_2-t_1>m$, and in this case the $\slinf$-crystal acts on $\bmu\in\cP^{(2)}(m)$ by adding a vertical strip of $e$ boxes to $\mu^2$ only \cite[Proposition 1.1]{Losev}. Then $\mu^1.\emp$ is always a source vertex of the $\slinf$-crystal, and if $|\bmu,\bt\rangle$ is a source vertex of both the $\slinf$- and $\sle$-crystals we have $\mu^2=\emp$. Thus when the charge is asymptotic it is enough to check that $\mu^1.\emp$ is a source vertex of the $\sle$-crystal to check that it is a source vertex of both the $\slinf$- and $\sle$-crystals.  This will be the case if and only if the partition $\mu^1$ has at most one good removable box, and in the case that such a box $b$ exists, then $\mathrm{ct}(b)+t_1=t_2$ mod $e$.

Now we apply these remarks to $\Phi_N|\bla_0,\bs_t\rangle$: the bipartition $\phi_N(\bla_0)$ is of the form $\nu.\emp$ for some partition $\nu$, and $\nu$ has at most one good removable box of content $s_2-s_1\mod e$, and no good removable boxes of any other content mod $e$. Let $\tilde{\nu}$ be the partition obtained from $\nu$ by appending a vertical strip of length $ek$ at the bottom of $\nu$, so if $\nu=(\nu_1,\dots,\nu_s)$ then $\tilde{\nu}=(\nu_1,\dots,\nu_s,1^{ek})$, and $|\tilde{\nu}|=m$. It is clear that $\tilde{\nu}$ again has at most one good removable box, which is subject to the same condition on its content mod $e$. Then $|\tilde{\nu}.\emp,(s_1,s_2+Ne)\rangle$ is a source vertex of both the $\sle$- and $\slinf$-crystal since the charge $(s_1,s_2+Ne)$ is asymptotic for bipartitions of size $m$. Applying the inverse wall-crossing bijection $\Phi_N^{-1}$, we set $\bmu=\phi_N^{-1}(\tilde{\nu}.\emp)$ so that $|\bmu,\bs_t\rangle=\Phi_N^{-1}|\tilde{\nu}.\emp,(s_1,s_2+Ne)\rangle$. Since $\Phi_N^{-1}$ is an isomorphism of the $\sle$- and $\slinf$-crystals, it follows that $|\bmu,\bs_t\rangle$ is a source vertex of the $\sle$- and $\slinf$-crystals on $\mathcal{F}_{e,\bs_t}$. Therefore $\tau^{-1}|\bmu,\bs_t\rangle=:\mu$ labels a simple cuspidal unipotent $\mathbbm{k}\GU_{2m+\iota}(q)$-module $X_\mu$.

Now we check that $X_\mu$ is in the same block as $X_\lambda$. If $|\bla,\bs\rangle$ and $|\bmu,\bs\rangle$ are two charged bipartitions with $|\bla|=|\bmu|$ and the same charge $\bs$, we will say that they are in the same combinatorial block (with respect to the charge $\bs$) if $\{\mathrm{ct}(b)\mod e\mid b\in|\bla,\bs\rangle\}=\{\mathrm{ct}(b)\mod e\mid b\in|\bmu,\bs\rangle\}$ is an equality of multisets. For example, $|3.\emp,(0,2)\rangle$ and $|2.1,(0,2)\rangle$ are in the same combinatorial block. Now, the two content multisets are the same means that $\bla$ can be transformed into $\bmu$ by a sequence of moves where a removable box of content congruent to $i$ mod $e$ is removed and put back again in the spot of an addable box of content congruent to $i$ mod $e$, for various $i$ in $\Z/e\Z$. Doing this corresponds to combinations of moves on the abacus of $\tau(\lambda)$ that are called ``elementary operations" in \cite[Section 7.3]{GHJ}, together with their inverses, so that the charge remains unchanged (one bead moves from row $2$ to row $1$ while another bead moves from row $1$ to row $2$; or a bead moves to a new spot in the same row). The elementary operations and their inverses on $\tau(\lambda)$, in turn, correspond to removing and adding $e$-rimhooks from the partition $\lambda$. It follows 
 that if $\tau(\lambda)$ and $\tau(\mu)$ are in the same combinatorial block then they are in the same block of $\mathbbm{k}\GU_{2m+\iota}(q)$. Both the $\slinf$-crystal and our procedure for constructing $\tilde{\mu}.\emp$ added $k$ boxes of content $i$ for every $i\in\Z/e\Z$ to $\nu.\emp$, so $\Phi|\bla,\bs_t\rangle$ is in the same combinatorial block as $|\tilde{\nu}.\emp,\bs_t+(0,Ne)\rangle$. The combinatorial wall-crossing respects combinatorial blocks, therefore $|\bla,\bs_t\rangle$ is in the same combinatorial block as $|\bmu,\bs_t\rangle$. It follows that $X_\lambda$ is in the same block as $X_\mu$. Since $\tau(\mu)\in\cF_{e,\bs_t}$ by construction, the $2$-core of $\mu$ is $\Delta_t=2\hbox{-core}(\lambda)$.
\end{proof}

\section{The Harish-Chandra branching rule for finite unitary groups}\label{heart of the artichoke}

If $\lambda\in\cP$ and $\tau(\lambda)=|\bla,\bs_t\rangle $ where $t\in\N_0$ is such that $\Delta_t$ is the $2$-core of $\lambda$, then denote by 
$|\bla_0,\bs_t\rangle$ the source vertex of both the $\slinf$- and $\sle$-crystals in the connected component containing $\tau(\lambda)$. Set $\lambda_0=\tau^{-1}|\bla_0,\bs_t\rangle\in\cP$.
Let $k$ be the depth of $\tau(\lambda)$ in the $\slinf$-crystal, let $r$ be the depth of $\tau(\lambda)$ in the $\sle$-crystal, and set $n'=n-2ek-2r$.

\begin{theorem}\label{conj} Let $q$ be a power of a prime $p$, let $\ell$ be a prime not dividing $q$, and suppose $\mathbbm{k}$ is a field of characteristic $\ell$ and a splitting field for all subgroups of $\GU_n(q)$. Setting $e$ equal to the order of $-q$ mod $\ell$, assume that $e$ is odd and at least $3$, and assume that $\ell>n/e$. 
Let $\lambda$ be a partition of $n$ and let $X_\lambda$ be the unipotent $\mathbbm{k}\GU_{n}(q)$-representation labeled by $\lambda$.  Then the cuspidal support of $X_\lambda$ is given by $$(\GU_{n'}(q)\times \GL_e(q^2)^{\times k}\times \GL_1(q^2)^{\times r}, X_{\lambda_0}\otimes \st_e^{\otimes k}\otimes \st_1^{\otimes r} ).$$
\end{theorem}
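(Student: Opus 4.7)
The plan is to proceed by induction on $n - n' = 2(ek + r)$, the total ``depth'' of $\tau(\lambda)$ in the joint $\sle$- and $\slinf$-crystal structure. The base case $k = r = 0$ is exactly Theorem \ref{dvv cusp}: then $\tau(\lambda) = \tau(\lambda_0)$ is a source vertex of both crystals, $X_\lambda$ is cuspidal, and the cuspidal support $(\GU_n(q), X_\lambda)$ matches the claim tautologically.

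For the inductive step I would first clear the $\sle$-depth. If $r > 0$, pick a good removable $i$-box in $\tau(\lambda)$, producing $\mu$ with $\tau(\mu) = \tilde{e}_i(\tau(\lambda))$. Commutativity of the $\sle$- and $\slinf$-crystals guarantees that $\tau(\mu)$ has $\slinf$-depth $k$, $\sle$-depth $r-1$, and the same joint source $\tau(\lambda_0)$. Theorem \ref{weakHC crystal} then gives $R_{\GU_{n-2}(q) \times \GL_1(q^2)}^{\GU_n(q)}(X_\mu \otimes \st_1) \twoheadrightarrow X_\lambda$, so combining the inductive description of the cuspidal support of $X_\mu$ with transitivity of Harish-Chandra induction and uniqueness of cuspidal support yields the claim for $X_\lambda$.

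This reduces us to the weakly cuspidal case $r = 0$, $k > 0$. Choose a single $\slinf$-edge $\tau(\lambda') \to \tau(\lambda)$ given by shifting one $e$-period of $\cA|\tau(\lambda'), \bs_t\rangle$ one step to the right; commutativity again forces $\tau(\lambda')$ to be weakly cuspidal of $\slinf$-depth $k-1$, so by induction its cuspidal support is $(\GU_{n'}(q) \times \GL_e(q^2)^{\times (k-1)},\, X_{\lambda_0} \otimes \st_e^{\otimes (k-1)})$. Writing $L = \GU_{n-2e}(q) \times \GL_e(q^2)$ and $G = \GU_n(q)$, it suffices to show $R_L^G(X_{\lambda'} \otimes \st_e) \twoheadrightarrow X_\lambda$; Lemma \ref{proj lemma 2} together with transitivity then pins down the cuspidal support of $X_\lambda$ as claimed.

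The plan for establishing this surjection is to pass to projective covers. Since $\st_e$ coincides with the unipotent part of the projective character $P_{\st_e}$, the unipotent characters of the summands of $R_L^G(P_{\lambda'} \otimes P_{\st_e})$ can be read off from the characteristic-zero induction $R_L^G(\lambda' \otimes \St_e)$ via the unitriangular unipotent decomposition matrix. On the Fock space side, the latter decomposes by the type-$B$ Littlewood--Richardson rule as a sum of $\tau^{-1}(\bnu)$, where $\bnu$ runs over bipartitions obtained from $\tau(\lambda')$ by adding a vertical $e$-strip split between the two components. The essential combinatorial input, which Theorem \ref{projectives thm} is designed to supply, is that amongst those $\bnu$ lying in the same $\ell$-block as $\tau(\lambda)$, the one produced by the chosen $\slinf$-edge corresponds to the dominance-maximal partition. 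Combined with lexicographic (hence dominance) unitriangularity of the decomposition matrix and Brauer--Humphreys reciprocity, this forces $P_\lambda$ to appear as a summand of $R_L^G(P_{\lambda'} \otimes P_{\st_e})$, and Lemma \ref{proj lemma 0} delivers the desired surjection. The hardest step will be precisely the dominance-order comparison underlying Theorem \ref{projectives thm}: translating single-step abacus shifts into concrete vertical-strip additions on the underlying partition and verifying dominance-maximality against all block-preserving competitors requires a careful case-by-case analysis over the shapes of $e$-periods permitted by Definition \ref{JL e-period def}.
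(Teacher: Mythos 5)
Your overall architecture (clear the $\sle$-depth first, then climb the $\slinf$-crystal using projective covers and dominance-unitriangularity) matches the paper's, but two steps fail as written. First, the single-edge induction does not interact correctly with the dominance argument. If $\tau(\lambda)$ is obtained from $\tau(\lambda')$ by shifting the $j$-th $e$-period with $j>1$ (e.g.\ $\lambda=\tilde{a}_{(1,1)}(\lambda_0)$ and $\lambda'=\tilde{a}_{(1)}(\lambda_0)$), then $\tau(\lambda)$ is \emph{not} the dominance-maximal block-preserving constituent of $R_L^G(\lambda'\otimes\St_e)$: shifting $P_1$ of $\tau(\lambda')$ instead is also a block-preserving vertical $e$-strip addition and yields $\tilde{a}_{(2)}(\lambda_0)$, which dominates $\lambda$, so unitriangularity identifies $P_{\tilde{a}_{(2)}(\lambda_0)}$ as a summand and says nothing about $P_\lambda$. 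This is exactly why Theorem \ref{projectives thm} decomposes $\sigma$ into \emph{columns} and induces each column at once through $\GL_{ec}(q^2)$ and $P_{\st_{ec}}$ (using Dipper--Du): inducing the Steinberg PIM of $\GL_{ec}(q^2)$ restricts the competitors to vertical $ec$-strip additions, among which the simultaneous shift of the first $c$ periods is the maximal one.

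Second, and more fundamentally, even once $P_\lambda\mid R_L^G(P_{\lambda'}\otimes P_{\st_e})$ is established, Lemma \ref{proj lemma 0} does not ``deliver the desired surjection'': it is stated in the opposite direction (summand of induced projective from a surjection, not conversely), and its hypothesis that the induced simple be cuspidal is not met since $X_{\lambda'}\otimes\st_e$ is not cuspidal for $k>1$. The summand relation only tells you, via Lemma \ref{proj lemma 1} and transitivity, that the cuspidal support of $X_\lambda$ lives on a Levi contained in $\GU_{n'}(q)\times\GL_e(q^2)^{\times k}$ up to conjugacy; since $\GU_{n'}(q)$ itself has standard Levi subgroups of the form $\GU_{n'-2ej}(q)\times\GL_e(q^2)^{\times j}$ affording cuspidals, this does not exclude a cuspidal support with \emph{more} than $k$ factors of $\GL_e(q^2)$ and a smaller unitary factor. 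Ruling that out is the content of Theorem \ref{cusp supp thm}, which needs a second induction on the rank of the unitary factor in the putative cuspidal support, together with the computation of the Hecke algebra parameter $p_1'=-1$ and the resulting parametrization of the head of $R(X_{\mu_0}\otimes\st_e^{\otimes k})$ by partitions of $k$. Your proposal omits this half of the argument entirely.
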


\noindent Theorem \ref{conj} is an upgrading of \cite[Conjecture 5.7]{GHJ}, \cite[Theorem B]{DVV1} to account for the $\slinf$-crystal on the Fock space as well as the $\sle$-crystal, all Levi subgroups affording cuspidal representations instead of just ``pure Levi subgroups." In the case that $\lambda$ has depth $0$ in the $\slinf$-crystal then the statement of Theorem \ref{conj} reduces to \cite[Conjecture 5.7]{GHJ}, \cite[Theorem B]{DVV1}. Theorem \ref{dvv cusp} (\cite[Theorem 5.10]{DVV2}) says that Theorem \ref{conj} is true if $X_\lambda$ itself is cuspidal. Motivation for Theorem \ref{conj} comes from the analogous statement for cyclotomic rational Cherednik algebras, whose category $\cO$ categorifies the Fock space and whose Harish-Chandra series categorify the $\sle$- and $\slinf$-crystals \cite{ShanVasserot},\cite{Shan}. Since we already know the weak branching rule and the classification of cuspidals is the same for the finite unitary groups as for the Cherednik algebras categorifying the same Fock spaces, it's reasonable to expect that the branching involving copies of $\GL_e(q^2)$ in the cuspidal support of a unipotent $\mathbbm{k}\GU_n$-module $X_\lambda$ is also described by tracing the $\slinf$-crystal component of $\tau(\lambda)$ to its source, just as for Cherednik algebras.

\begin{remark}Theorem \ref{conj} should imply that the involution on unipotent $\mathbbm{k}\GU_n(q)$-modules given by Alvis-Curtis duality coincides with the generalized Mullineux involution defined in \cite[Theorem 2.9]{GJN} on the twisted two-quotients of the partitions labeling the simple modules, generalizing the result of \cite{DJ} for finite general linear groups to finite unitary groups in the case of large $\ell$.\end{remark}

For illustrations of Theorem \ref{conj} when $e=3$, see Tables \ref{tab:Unitary3,5}, \ref{tab:Unitary3,n=12}, \ref{tab:Unitary3,n=13}, \ref{tab:Unitary3,n=15}, \ref{tab:Unitary3,n=16}, \ref{tab:Unitary3,n=18, Cuspidals}, \ref{tab:Unitary3,n=18 depth 1,2,3}, \ref{tab:Unitary3,n=19, Cuspidals}, and \ref{tab:Unitary3,n=19 depth 1,2,3}. For a computational verification for $n\leq 17$ when $e=3$, see Theorem \ref{up to 15}.

\subsection{Proof of Theorem \ref{conj}} \subsubsection{Remarks and motivation}
The proof of the analogue of Theorem \ref{conj} for cyclotomic rational Cherednik algebras proceeds by constructing a categorical action of an infinite-dimensional Heisenberg algebra (over $\mathbb{C}$) on the cyclotomic category $\cO$ \cite{ShanVasserot}. There appear to be problems with doing this for the unipotent category of finite classical groups. Despite this, enough of the construction of \cite{ShanVasserot} was adapted in \cite{DVV2} to yield the same classification of cuspidals as for a cyclotomic rational Cherednik algebra. We are going to take a low-tech approach and prove Theorem \ref{conj} using elementary combinatorics, the combinatorial classification of cuspidals, unitriangularity of the decomposition matrix with respect to dominance order, standard facts about Harish-Chandra series appearing in the Harish-Chandra induction of simple modules and their projective covers, Dipper-Du's results about $\GL_n(q^2)$, and an induction argument. It is natural to use induction since we are dealing with an infinite series of groups indexed by natural numbers. Remarkably, we can circumvent categorification of the Heisenberg algebra to deduce the full branching rule from the extant results about the $\sle$-crystal and cuspidals.

We will abuse notation and write $\tilde{a}_\sigma(\lambda)$ when we mean $\tilde{a}_\sigma(\tau(\lambda))$, etc, by transferring the $\slinf$-crystal on $\sum_{t\geq 0} \cF_{e,\bs_t}$ to the corresponding simple directed graph on $\cP$ along the map $\tau$. Before we sketch the steps of the proof, recall that the vertices of a connected component of the $\slinf$-crystal are given by the partitions $\tilde{a}_\sigma(\lambda_0)$, $\sigma\in\cP$, where $\lambda_0$ is the source vertex of that connected component. We know that the weak cuspidals $X_\lambda$ are labeled by $\lambda$ which have depth $0$ in the $\sle$-crystal; thus they are labeled by those $\lambda$ which lie in a connected component of the $\slinf$-crystal whose source vertex $\lambda_0$ labels a cuspidal unipotent module $X_{\lambda_0}$. It follows that any $\lambda\in\cP$ such that $X_\lambda\in\mathbbm{k}\GU_{|\lambda|}(q)$-mod is weakly cuspidal can be written as $\tilde{a}_\sigma(\lambda_0)$ for some $\sigma,\lambda_0\in\cP$ such that $X_{\lambda_0}\in\mathbbm{k}\GU_{|\lambda_0|}$-mod is cuspidal. Moreover, $\sigma$ and $\lambda_0$ are unique as each $\lambda$ belongs to a unique connected component of the $\slinf$-crystal, and each such connected component is isomorphic as a graph to Young's lattice.

\subsubsection{Outline of the proof}
The proof of Theorem \ref{conj} proceeds by the following steps. Let $\lambda_0$ be an arbitrary partition such that $X_{\lambda_0}$ is a cuspidal $\mathbbm{k}\GU_{|\lambda_0|}(q)$-module. \\
\textbf{Step 1.} Theorem \ref{projectives thm} establishes that the projective indecomposable module $P_{\tilde{a}_\sigma(\lambda_0)}$, for $\sigma$ any partition, is a direct summand of $R_{\GU_{|\lambda_0|}(q)\times \GL_{e}(q^2)^{\times |\sigma|}}^{\GU_{|\lambda_0|+2e|\sigma|}(q)} \left(P_{\lambda_0}\otimes P_{\st_e}^{\otimes|\sigma|} \right)$. This is important because the projective indecomposable module $P_{\tilde{a}_\sigma(\lambda_0)}$ is the projective cover of a weakly cuspidal unipotent module $X_{\tilde{a}_\sigma(\lambda_0)}$, and its appearance as a summand of a Harish-Chandra induced projective has consequences for the cuspidal support of $X_{\tilde{a}_\sigma(\lambda_0)}$.\\
\textbf{Step 2.} Corollary \ref{cusp depth bound} observes that Theorem \ref{projectives thm} bounds the cuspidal depth of $X_{\tilde{a}_\sigma(\lambda_0)}$ from below by $|\sigma|$, so by the depth of $\tilde{a}_\sigma(\lambda_0)$ in the $\slinf$-crystal. (More specifically, it implies that the Dynkin diagram of the standard Levi subgroup in the cuspidal support of $X_{\tilde{a}_\sigma(\lambda_0)}$ is obtained from the Dynkin diagram of $\GU_{|\lambda_0|}(q)\times\GL_{e}(q^2)^{\times |\sigma|}$ by deleting some number $d\geq 0$ of vertices and whatever edges are connected to the deleted vertices.) This also cinches the proof of Theorem \ref{conj} when $|\lambda_0|<2e$, providing the base case for induction on $|\lambda_0|$. \\
\textbf{Step 3.} In Theorem \ref{cusp supp thm} we induct on $m$, where $\GU_{m}(q)$ is a factor of the Levi subgroup in the cuspidal support of $X_{\tilde{a}_\sigma(\lambda_0)}$, to argue that 
$m$ cannot be smaller than $|\lambda_0|$. Comparing with the result of Step 2 we conclude that the cuspidal support of $X_{\tilde{a}_\sigma(\lambda_0)}$ is  $\left(\GU_{|\lambda_0|}(q)\times \GL_{e}(q^2)^{\times |\sigma|},X_{\lambda_0}\otimes \st_e^{\otimes |\sigma|}\right)$. This proves Theorem \ref{conj} for weak cuspidals.\\
\textbf{Step 4.} We observe in Corollary \ref{cusp supp cor} that Theorem \ref{conj} is true for all unipotent $X_\lambda$ if it is true for those $X_\lambda$ which are weakly cuspidal. From the result of the previous step for weak cuspidals we can now conclude that Theorem \ref{conj} is true.

\subsubsection{The proof}
\begin{theorem}\label{projectives thm}
Let $\lambda$ be a partition of $n$ such that $X_\lambda\in\mathbbm{k}\GU_{n}(q)$ is weakly cuspidal and assume that $\ell=\charac(\mathbbm{k})>n/e$. Write $\lambda=\tilde{a}_\sigma(\lambda_0)$ for the unique partitions $\sigma,\lambda_0\in\cP$ such that $X_{\lambda_0}\in\mathbbm{k}\GU_{n}(q)$ is cuspidal. Let $P_\lambda$ be the projective cover of $X_\lambda$, let $P_{\lambda_0}$ be the projective cover of $X_{\lambda_0}$, and let $P_{\st_e}$ be the projective cover of the Steinberg representation $\st_e=X_{1^e}\in\mathbbm{k}\GL_{e}(q^2)$. Then $P_{\lambda}$ is a direct summand of $R_{\GU_{|\lambda_0|}(q)\times\GL_{e}(q^2)^{\times|\sigma|}}^{\GU_{n}(q)}\left(P_{\lambda_0}\otimes P_{\st_e}^{\otimes|\sigma|}\right)$.
\end{theorem}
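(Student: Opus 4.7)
The strategy is to pass to ordinary characters and combine unitriangularity of the unipotent decomposition matrix with a combinatorial maximality statement for the $\slinf$-crystal. Let $R := R_{\GU_{|\lambda_0|}(q)\times\GL_e(q^2)^{\times|\sigma|}}^{\GU_n(q)}\left(P_{\lambda_0}\otimes P_{\st_e}^{\otimes|\sigma|}\right)$. Harish-Chandra induction is exact and sends projectives to projectives, so $R\cong\bigoplus_\mu P_\mu^{\oplus m_\mu}$ for some $m_\mu\in\Z_{\geq 0}$, and the multiplicity of an ordinary unipotent character $[\mu]$ in $[R]^{\mathrm{unip}}$ equals $\sum_\nu m_\nu d_{\mu,\nu}$. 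By unitriangularity of $(d_{\mu,\nu})$ in lex order (Section \ref{unip unitary combinat}), only $\nu\geq_{\mathrm{lex}}\mu$ contribute. Hence if $\lambda$ is the lex-maximum partition appearing in $[R]^{\mathrm{unip}}$, then $m_\lambda$ equals the coefficient of $[\lambda]$ there. It therefore suffices to prove that $\lambda=\tilde{a}_\sigma(\lambda_0)$ satisfies: (i) it appears with positive multiplicity in $[R]^{\mathrm{unip}}$, and (ii) no partition strictly larger in lex order appears in $[R]^{\mathrm{unip}}$.

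For (i), expand $[P_{\lambda_0}]^{\mathrm{unip}}=[\lambda_0]+\sum_{\mu_0<_{\mathrm{lex}}\lambda_0}d_{\mu_0,\lambda_0}[\mu_0]$ (using unitriangularity with $d_{\lambda_0,\lambda_0}=1$) and $[P_{\st_e}]^{\mathrm{unip}}=[\st_e]=[(1^e)]$ (by the explicit statement $[P_{\st_n}:\lambda]=\delta_{(1^n),\lambda}$ in Section \ref{unip unitary combinat}). Via the twisted $2$-quotient map $\tau$, Harish-Chandra induction of unipotent characters for $\GU_n(q)$ preserves the $2$-core and is computed by type $B$ Weyl group induction on bipartitions; tensoring with $\st_e=(1^e)$ and inducing thus corresponds to the Pieri rule ``add a vertical $e$-strip to the bipartition $\tau(\cdot)$ in every legal way, each with coefficient one.'' By Definition \ref{slinf tot per}, each $\slinf$-crystal arrow adds exactly such a vertical $e$-strip (moving one $e$-period one step), so the composite $\tilde{a}_\sigma$ realizes $|\sigma|$ iterated strip additions to $\tau(\lambda_0)$. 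In particular $\tau(\lambda)$ appears in the Pieri expansion, giving (i).

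For (ii), every partition $\mu$ in $[R]^{\mathrm{unip}}$ has $\tau(\mu)$ obtained from $\tau(\mu_0)$ for some $\mu_0\leq_{\mathrm{lex}}\lambda_0$ by $|\sigma|$ legal vertical $e$-strip additions. The key combinatorial claim is that $\lambda=\tilde{a}_\sigma(\lambda_0)$ is the lex-maximum, after applying $\tau^{-1}$, among all such $\mu$. This rests on two monotonicity statements. First, the total order on addable boxes of a bipartition from Section \ref{crystals} (contents first, with tie-breaking by component) makes the first $e$-period the largest legal addable vertical $e$-strip, and moving it under $\tilde{a}_{(1)}$ produces the lex-maximal partition; iterating yields maximality of $\tilde{a}_\sigma$ among $|\sigma|$-fold strip additions to $\tau(\lambda_0)$. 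Second, a base-monotonicity statement: for $\mu_0<_{\mathrm{lex}}\lambda_0$, the lex-maximum over $|\sigma|$-fold vertical $e$-strip additions to $\tau(\mu_0)$ cannot exceed $\tilde{a}_\sigma(\lambda_0)$, since any legal addition to $\tau(\mu_0)$ can be dominated by a legal addition to $\tau(\lambda_0)$ via the elementary abacus operations of \cite[Section 7.3]{GHJ}. Combining these gives (ii) and completes the proof.

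The main obstacle is the combinatorial maximality claim in the previous paragraph. The ``largest vertical $e$-strip'' is defined on the abacus of a bipartition, and it must be shown that under $\tau^{-1}$ this bipartition-level maximum translates into a lex (equivalently dominance) maximum on the underlying partition. The obstacle is compounded by iteration: as $\tilde{a}_\sigma$ acts, the $e$-periods themselves shift and interact, so the maximality must be preserved dynamically along the path. This is precisely the content of the introduction's remark that ``Certain paths in the $\slinf$-crystal on the twisted $2$-quotients of partitions are compatible with the dominance order on partitions.'' Settling these compatibilities carefully, using the explicit abacus description of the $\slinf$-crystal from \cite{GerberN} and the elementary operations of \cite{GHJ} that underlie $\tau$, will constitute the technical bulk of the proof.
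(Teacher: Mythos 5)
Your overall strategy (pass to characters, use unitriangularity, identify a maximal unipotent constituent) is the same as the paper's, and your step (i) is fine: each $\slinf$-arrow adds a vertical $e$-strip to the bipartition, so $\tau(\tilde{a}_\sigma(\lambda_0))$ does occur in the iterated Pieri expansion. But your step (ii) — that $\tilde{a}_\sigma(\lambda_0)$ is the lex-maximum among \emph{all} unipotent constituents of $R_{\GU_{|\lambda_0|}\times\GL_e^{\times|\sigma|}}^{\GU_n}(P_{\lambda_0}\otimes P_{\st_e}^{\otimes|\sigma|})$ — is false whenever $\sigma$ has more than one part. The same induced module contains $\tau(\tilde{a}_{\sigma'}(\lambda_0))$ for every $\sigma'$ with $|\sigma'|=|\sigma|$ (each is reached by $|\sigma|$ legal vertical $e$-strip additions), and $\tilde{a}_{(|\sigma|)}(\lambda_0)$, which piles all $|\sigma|$ shifts onto the first $e$-period, is strictly more dominant (hence lex-larger) than $\tilde{a}_\sigma(\lambda_0)$ unless $\sigma$ is a single row. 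Concretely, for $e=3$, $\lambda_0=\emptyset$, $\sigma=(1,1)$: both $2^6=\tilde{a}_{1^2}(\emptyset)$ and $4^3=\tilde{a}_{(2)}(\emptyset)$ occur in $R_{\GL_3(q^2)^2}^{\GU_{12}(q)}(P_{\st_3}^{\otimes 2})$, and $4^3>_{\mathrm{lex}}2^6$, so your maximality argument can only ever detect $P_{4^3}$, never $P_{2^6}$. A greedy ``take the largest strip at each step'' iteration produces $\tilde{a}_{(k)}$, not $\tilde{a}_\sigma$ for general $\sigma$, so no amount of care with the monotonicity lemmas you defer will rescue the claim as stated.

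The paper's proof circumvents exactly this. It decomposes $\sigma$ into its columns, $\tilde{a}_\sigma=\tilde{a}_{1^{c_r}}\circ\cdots\circ\tilde{a}_{1^{c_1}}$ with $\sigma^t=(c_1,\dots,c_r)$, and at each stage replaces $P_{\st_e}^{\otimes c}$ by $P_{\st_{ec}}$, which is legitimate because Dipper--Du's theorem (for $\ell>n/e$) gives $P_{\st_{ec}}\mid R_{\GL_e(q^2)^{\times c}}^{\GL_{ec}(q^2)}(P_{\st_e}^{\otimes c})$. Inducing $P_{\st_{ec}}$ from the single factor $\GL_{ec}(q^2)$ restricts the Pieri rule to vertical strips of length $ec$ only; after cutting to the block in which exactly $c$ boxes of each residue are added, a direct abacus computation shows that $\tilde{a}_{1^c}(\mu)$ is the unique dominance-maximal constituent (it is the most dominant way to add a thickened ribbon of local width $\le 2$ with balanced residues). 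That is the mechanism that isolates $\tilde{a}_{1^c}$ rather than $\tilde{a}_{(c)}$, and it is entirely absent from your plan. You would need to add both the column decomposition with the Dipper--Du input and the width-$\le 2$ ribbon analysis identifying $\tilde{b}_{1^{ec}}(\mu)=\tilde{a}_{1^{ec}}(\mu)$; as written, your proposal also defers all of the combinatorial verification to ``the technical bulk,'' so even where the route is salvageable it is not yet a proof.
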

\begin{proof}
	
Let $\sigma^t=:(c_1,c_2,\dots,c_r) $ be the transpose partition of $\sigma$, so $c_1$ is the length of the first column of $\sigma$, $c_2$ is the length of the second column of $\sigma$, ..., and $r=\sigma_1$. We have 
\[\tilde{a}_\sigma(\lambda_0)=\tilde{a}_{1^{c_r}}\left(\dots\tilde{a}_{1^{c_2}}\left( \tilde{a}_{1^{c_1}}\left(\lambda_0\right)\right)\dots\right).\]
By our assumption that $\ell>n/e$, the cuspidal support of $\st_{ek}\in\mathbbm{k}\GL_{ek}(q^2)$ is $\left( \GL_e(q^2)^{\times k},\st_e^{\otimes k} \right)$ for any $k\leq n/e$ \cite{DipperDu}. This implies that $P_{\st_{ek}}$ is a direct summand of $R_{\GL_e(q^2)^{\times k}}^{\GL_{ek}(q^2)}P_e^{\otimes k} $ for any $k\leq n/e$.
To prove the theorem it thus suffices to show that if $\mu$ is weakly cuspidal, then 
 $P_{\tilde{a}_{1^k}(\mu)}$ is a direct summand of $R_{\rL}^{\rG}\left( P_{\mu}\otimes P_{\st_{ek}} \right)$ where $\rG=\GU_{|\mu|+2ek}(q)$ and $\rL=\GU_{|\mu|}(q)\times\GL_{ek}(q^2)$.

Recall that the unipotent part of $[P_{ek}]\in[\mathbbm{k}\GL_{ek}(q^2)\mathrm{-mod}]$ is given by the ordinary Steinberg character $\St_{ek}=(1^{ek})$. Write $[P_\mu]=\mu+\sum\limits_{\nu\vartriangleleft\mu}a_\nu \nu$ for some $a_\nu\in\N_0$ and such that the $\nu$ in the sum have the same $e$-core as $\mu$. Then we can calculate the unipotent part of the projective character $[R_{\rL}^{\rG}\left(P_\mu\otimes P_{\st_{ek}}\right)]$ by calculating $\Ind_{B_m\times S_{ek}}^{B_{m+ek}}\mu^1.\mu^2\otimes 1^{ek}$ where $\tau(\mu)=|\mu^1.\mu^2,\bs_t\rangle$ and $m=|\mu^1|+|\mu^2|$, and then applying $\tau^{-1}$ to the resulting positive linear combination of bipartitions. From the Pieri rule we know that the bipartitions occurring are obtained by adding a (possibly broken) vertical strip of length $ek$ to the bipartitions $\tau(\mu),\tau(\nu)$ etc. in the formula for $P_\mu$. Moreover, we may pick out a direct summand $Q$ of the projective module $R_{\rL}^{\rG}\left(P_\mu\otimes P_{\st_{ek}}\right)$ by cutting to the block determined by adding only those vertical strips in which exactly $k$ boxes of each distinct residue mod $e$ are added.

Let $\nu\vartriangleleft\mu$ such that $\nu$ and $\mu$ have the same $e$-core. Let $\tilde{b}_{1^{ek}}(\nu)$ be the most dominant partition obtained from $\nu$ by adding a thickened union of ribbons $R$ to the border of $\nu$ such that $R$ has $2ek$ boxes, the $e$-core of $\tilde{b}_{1^{ek}}(\nu)$ is the same as the $e$-core of $\nu$, and $R$ locally has width at most $2$ (meaning that for any box $b$ in $R$, there is at most one box in the same row and to the right of $b$). Define $\tilde{b}_{1^{ek}}(\mu)$ similarly. Then clearly $\tilde{b}_{1^{ek}}(\nu)\vartriangleleft \tilde{b}_{1^{ek}}(\mu)$. 

We claim that $\tilde{b}_{1^{ek}}(\mu)=\tilde{a}_{1^{ek}}(\mu)$. 
Since $\mu$ is weakly cuspidal, the abacus of $\tau(\mu)=|\mu^1.\mu^2,\bs_t\rangle$ is totally $e$-periodic. 
Let $P_1,\dots, P_k$ be the first $k$ $e$-periods of the abacus of $\tau(\mu)$. Then $\tilde{a}_{1^{ek}}$ slides each of $P_1,\dots ,P_k$ one step to the right. On the Young diagram of the charged bipartition $|\mu_1.\mu_2,\bs_t\rangle$, $\tilde{a}_{1^{ek}}$ adds the biggest addable vertical strip (possibly disconnected) of length $ek$ such that exactly $k$ boxes of each residue in $\Z/e\Z$ are added. Now we use the formula given in Section \ref{unip unitary combinat} for recovering the partition $\mu$ from $\tau(\mu)$ to see what this does to $\mu$. Recall that up to adding some integer $z$ to every $\beta$-number, $\mu$ is the partition whose $\beta$-numbers are given by $\{2\beta^1\}\cup \{2\beta^2-e\}$ where $\beta^j$ are the $\beta$-numbers of $\mu^j$ with charge $(\bs_t)_j$ for $j=1,2$ (i.e. $\beta^j$ is the set of column positions of beads in row $j$ of the abacus of $|\mu^1.\mu^2,\bs_t\rangle$). All the odd $\beta$-numbers of $\mu$ come from row $2$ of the abacus of $\tau(\mu)$ and all the even $\beta$-numbers of $\mu$ come from row $1$ of that abacus. 

  When we move $P_1$ one step to the right on the abacus of $\tau(\mu)$, we move the biggest set of $e$ beads whose residues run through $\mathbb{Z}/e\mathbb{Z}$ one step each to the right. Since $e$ and $2$ are coprime, on $\cA_{e,t}(\mu^1.\mu^2)$ this corresponds to moving the biggest set of $e$ beads whose residues run through $\Z/e\Z$ two steps each to the right. Let $\tilde{P}_1$ be the set of beads on $\cA_{e,t}(\mu^1.\mu^2)$ corresponding to $P_1$. When we move $P_2$ one step to the right, on  $\cA_{e,t}(\mu^1.\mu^2)$ we move the  biggest set of $e$ beads in $\cA_{e,t}(\mu^1.\mu^2)\setminus \tilde{P}_1$ whose residues run through $\Z/e\Z$ two steps each to the right. Etc. Altogether, we take the biggest set of $ek$ distinct $\beta$-numbers in $\cA_{e,t}(\mu^1.\mu^2)$ whose residues run through $\Z/e\Z$ exactly $k$ times, and replace them all by $\beta+2$. Clearly this produces the most dominant partition from those obtained from $\mu$ by taking some set of $ek$ $\beta$-numbers in $\cA_{e,t}(\mu^1.\mu^2)$ with each residue mod $e$ occurring $k$ times and replacing them all with $\beta+2$.
  
  Replacing $\beta\in\cA_e(\mu)$ with $\beta+2$ (supposing that $\beta+2$ is not already in $\cA_e(\mu)$) can be visualized by a bead on a single-line abacus hopping two spaces to its right. If $\beta+1\in\cA_e(\mu)$ then it hops over another bead, if not then it hops over an empty space. Hopping over another bead adds a vertical domino (looks like $(1^2)$) to the Young diagram of $\mu$, while hopping over a space adds a horizontal domino to $\mu$ (looks like $(2)$). If $\beta\in\tilde{P}_i$ for $i<k$ hops over a bead $\beta+1\in P_{i'}$ for $i<i'\leq k$, then  the bead $\beta+1$ will subsequently hop to $\beta+3$  and the combined effect is two vertical dominoes added in the same row position, for a total width of $2$ boxes added in those rows. If $\beta$ hops over a space, then the row that it represents in $\mu$ has not changed after this move, and no more boxes are added to that row of $\mu$ by moving any other beads of $\tilde{P}_{i'}$, $i'\geq i$, two steps to the right. In either case, at most $2$ boxes are added to each row of $\mu$.  So moving $P_1,\dots,P_k$ once to the right in $\tau(\mu)$ adds a thickened ribbon to the border of $\mu$ whose width is at most $2$ in any row. Moreover, Lemma \ref{blocks slinf} shows that the $e$-core of $\tilde{a}_{1^{ek}}(\mu)$ is the same as the $e$-core of $\mu$. We deduce that $\tilde{b}_{1^{ek}}(\mu)=\tilde{a}_{1^{ek}}(\mu)$.

It follows that $\tilde{a}_{1^{ek}}(\mu)$ is the most dominant partition appearing as a unipotent constituent of $Q$. By unitriangularity of the decomposition matrix with respect to the dominance order \cite{Geck}, this implies that $P_{\tilde{a}_{1^{ek}}(\mu)}$ is a direct summand of $Q$ and hence that $P_{\tilde{a}_{1^{ek}}(\mu)}$ is a direct summand of $R_{\rL}^{\rG}\left(P_\mu\otimes P_{\st_{ek}}\right)$. This concludes the proof.
\end{proof}

\begin{corollary}\label{cusp depth bound} Keep the same assumptions on $e$ and $\ell$ as in Theorem \ref{projectives thm}. Let $\lambda\in\cP$ such that $X_\lambda\in\mathbbm{k}\GU_{|\lambda|}(q)$ is weakly cuspidal and write $\lambda=\tilde{a}_{\sigma}(\lambda_0)$ as in Theorem \ref{projectives thm}.  \begin{enumerate}\item
Let $(\rL,X_{\rL})$ be the cuspidal support of $X_\lambda$. Then $\rL\cong \GU_m(q)\times \GL_e(q^2)^{\times k}$ where $k\geq |\sigma|$ and $m+2ek=|\lambda|$.
\item If $|\lambda_0|<2e$ then Theorem \ref{conj} holds for $X_\lambda$.
\end{enumerate}
\end{corollary}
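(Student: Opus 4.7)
My plan is to derive both parts directly from Theorem~\ref{projectives thm} by chaining it with Lemma~\ref{proj lemma 1}, Lemma~\ref{proj lemma 2}, the Dipper--Du classification, and the defining property of weak cuspidality. Theorem~\ref{projectives thm} gives that $P_\lambda$ is a direct summand of $R_L^G(P_{\lambda_0}\otimes P_{\st_e}^{\otimes|\sigma|})$ for $L=\GU_{|\lambda_0|}(q)\times\GL_e(q^2)^{\times|\sigma|}$ and $G=\GU_{|\lambda|}(q)$, and since $X_{\lambda_0}\otimes\st_e^{\otimes|\sigma|}$ is a cuspidal simple $\mathbbm{k}L$-module (Theorem~\ref{dvv cusp} for $X_{\lambda_0}$, Dipper--Du \cite{DipperDu} for $\st_e$), Lemma~\ref{proj lemma 1} places the cuspidal support $(M,Z)$ of $X_\lambda$ inside $L$ as a standard Levi subgroup.

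For part~(1) I would then unpack $M\subseteq L$ concretely. A standard Levi of a direct product is a product of standard Levis of the factors, so $M=N_0\times N_1\times\cdots\times N_{|\sigma|}$ with $N_0\leq \GU_{|\lambda_0|}(q)$ and each $N_i\leq\GL_e(q^2)$ for $i\geq 1$, and correspondingly $Z=Z_0\otimes Z_1\otimes\cdots\otimes Z_{|\sigma|}$ with each $Z_i$ a cuspidal unipotent $\mathbbm{k}N_i$-module. Under the assumption $\ell>n/e$, Dipper--Du then restricts each $N_i$ with $i\geq 1$ to be either $\GL_e(q^2)$ or $\GL_1(q^2)^{\times e}$, and forces every $\GL$-factor of $N_0$ to have rank $1$ or $e$. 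Weak cuspidality of $X_\lambda$ rules out every $\GL_1(q^2)$ factor in $M$: transitivity of $R$ would otherwise exhibit $X_\lambda$ as a simple quotient of $R_{\GU_{n-2}(q)\times\GL_1(q^2)}^{G}(V)$ for some $V$, violating weak cuspidality. What remains is exactly $M=\GU_m(q)\times\GL_e(q^2)^{\times k}$ with $k\geq|\sigma|$ (the extra $k'=k-|\sigma|$ copies of $\GL_e(q^2)$ coming from inside $N_0$, so that $m+2ek'=|\lambda_0|$) and $m+2ek=|\lambda_0|+2e|\sigma|=|\lambda|$.

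For part~(2), the hypothesis $|\lambda_0|<2e$ together with $m+2ek'=|\lambda_0|$ forces $k'=0$ and hence $M=L$ exactly. I then apply Lemma~\ref{proj lemma 2} with $X_1=Z$, $X_2=X_{\lambda_0}\otimes\st_e^{\otimes|\sigma|}$, $P_2=P_{\lambda_0}\otimes P_{\st_e}^{\otimes|\sigma|}$, and $Q=P_\lambda$; Theorem~\ref{projectives thm} supplies the hypothesis $Q\mid R_L^G P_2$, and the lemma concludes $Z\cong X_{\lambda_0}\otimes\st_e^{\otimes|\sigma|}$. Matching this against Theorem~\ref{conj} for weakly cuspidal $\lambda$ is pure bookkeeping: the $\sle$-depth is $r=0$, the $\slinf$-depth equals $|\sigma|$, and $\tau(\lambda_0)$ is simultaneously the source of both crystals, so $n'=|\lambda_0|$ and the predicted cuspidal pair is exactly the one just obtained.

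The step I expect to require the most care is the block-theoretic bookkeeping inside part~(1): verifying that (a)~the cuspidal support of a unipotent module is itself unipotent, so Dipper--Du applies tensor-factor by tensor-factor to $Z$, and (b)~weak cuspidality really propagates into the prohibition of any $\GL_1(q^2)$ factor in $M$, including any hidden inside $N_0$. Both facts are standard, but they are exactly what narrows $M$ from ``some standard Levi of $L$'' to the rigid form $\GU_m(q)\times\GL_e(q^2)^{\times k}$; once that reduction is in place, part~(1) is a one-line count and part~(2) is a direct application of Lemma~\ref{proj lemma 2}.
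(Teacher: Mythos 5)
Your proof is correct and follows essentially the same route as the paper: Theorem \ref{projectives thm} combined with Lemma \ref{proj lemma 1} to locate the cuspidal support inside $\GU_{|\lambda_0|}(q)\times \GL_e(q^2)^{\times|\sigma|}$, weak cuspidality together with Dipper--Du to force the shape $\GU_m(q)\times\GL_e(q^2)^{\times k}$ with $k\geq|\sigma|$, and the numerical constraint $|\lambda_0|<2e$ to pin down $k=|\sigma|$ in part (2). Your explicit invocation of Lemma \ref{proj lemma 2} to identify the cuspidal module in part (2) fills in a detail the paper's proof of the corollary leaves implicit, and it is exactly the right way to complete the statement since part (2) must deliver the full cuspidal pair to serve as the base case of the later induction.
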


\begin{proof} (1)\;
	The simple $X_\lambda$ is weakly cuspidal if and only if $\lambda$ has depth $0$ in the $\sle$-crystal. By Theorem \ref{weakHC crystal} this happens if and only if no copies of $\GL_1(q^2)$ occur in $\rL$. Thus $\rL\cong \GU_m(q)\times \GL_e(q^2)^{\times k}$ for some $m\geq 0 $ and some $k\geq0$ such that $m+2ek=|\lambda|$. By Theorem \ref{projectives thm}, $P_\lambda$ is a direct summand of $R_{\GU_{|\lambda_0|}(q)\times \GL_{e}(q^2)^{|\sigma|}}^{\GU_{|\lambda_0|+2e|\sigma|}(q)}P_{\lambda_0}$. The statement then follows from Lemma \ref{proj lemma 1}.
	
	(2)\;  Let $\lambda=\tilde{a}_\sigma(\lambda_0)$ such that $|\lambda_0|<2e$. By Corollary \ref{cusp depth bound}, the Levi $\rL$ in the cuspidal support of $X_\lambda$ is $\GU_m(q)\times \GL_e(q)^{\times k}$ where $k\geq |\sigma|$, and such that $|\lambda|=m+2ek=|\lambda_0|+2e|\sigma|$. But $|\lambda_0|<2e$ so we must have $k=|\sigma|$.
\end{proof}

\begin{theorem}\label{cusp supp thm} Keep the same assumptions on $e$ and $\ell$ as in Theorem \ref{projectives thm}. 
Let $\lambda\vdash n$ such that $X_\lambda\in\mathbbm{k}\GU_{n}(q)$ is weakly cuspidal and write $\lambda=\tilde{a}_\sigma(\lambda_0)$ for $\lambda_0$ a source vertex of the $\slinf$-crystal as in the statement of Theorem \ref{conj}. Then  $\left(\GU_{|\lambda_0|}(q)\times \GL_e(q^2)^{\times|\sigma|}, X_{\lambda_0}\otimes \st_e^{\otimes|\sigma|}\right)$ is the cuspidal support of $X_\lambda$.
\end{theorem}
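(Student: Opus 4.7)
The plan is to proceed by induction on $n=|\lambda|$. When $n<2e$, the identity $|\lambda|=|\lambda_0|+2e|\sigma|$ forces $|\sigma|=0$, so $\lambda=\lambda_0$ is itself cuspidal and the statement reduces to Theorem~\ref{dvv cusp}. For the inductive step, by Corollary~\ref{cusp depth bound} the cuspidal support of $X_\lambda$ must take the form $(\GU_m(q)\times\GL_e(q^2)^{\times k},X_\mu\otimes\st_e^{\otimes k})$ with $k\geq|\sigma|$, $m+2ek=n$, and $X_\mu$ cuspidal; it is enough to establish $k=|\sigma|$ (equivalently $m=|\lambda_0|$), since once the two Levis $\GU_m\times\GL_e^{\times k}$ and $\GU_{|\lambda_0|}\times\GL_e^{\times|\sigma|}$ coincide, Theorem~\ref{projectives thm} provides $P_\lambda\mid R(P_{\lambda_0}\otimes P_{\st_e}^{\otimes|\sigma|})$ and Lemma~\ref{proj lemma 2} then forces $X_\mu\cong X_{\lambda_0}$, hence $\mu=\lambda_0$.

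Assume for contradiction that $k>|\sigma|$. Factoring $R_{\GU_m\times\GL_e^{\times k}}^{\GU_n}\simeq R_{\GU_{n-2e}\times\GL_e}^{\GU_n}\circ R_{\GU_m\times\GL_e^{\times(k-1)}\times\GL_e}^{\GU_{n-2e}\times\GL_e}$ via transitivity and applying adjunction with ${}^{\ast}R$, one extracts a weakly cuspidal simple $X_{\lambda'}\in\mathbbm{k}\GU_{n-2e}(q)\text{-mod}$ whose cuspidal support is $(\GU_m\times\GL_e^{\times(k-1)},X_\mu\otimes\st_e^{\otimes(k-1)})$ and such that $X_\lambda$ lies in the head of $R_{\GU_{n-2e}\times\GL_e}^{\GU_n}(X_{\lambda'}\otimes\st_e)$. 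Since $|\lambda'|=n-2e<n$, the inductive hypothesis applied to $X_{\lambda'}$ yields $\lambda'=\tilde{a}_{\sigma'}(\mu)$ for some partition $\sigma'$ of size $k-1$, with $\mu$ necessarily equal to the unique source vertex of the $\slinf$-component of $\tau(\lambda')$.

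The hard part will be upgrading the relation ``$X_\lambda$ lies in the head of $R_{\GU_{n-2e}\times\GL_e}^{\GU_n}(X_{\tilde{a}_{\sigma'}(\mu)}\otimes\st_e)$'' to the combinatorial conclusion that $\tau(\lambda)$ sits inside the $\slinf$-component of $\tau(\mu)$. Because each connected component of the $\slinf$-crystal is isomorphic to Young's lattice and hence has a unique source, such a conclusion combined with $\lambda=\tilde{a}_\sigma(\lambda_0)$ would force $\mu=\lambda_0$, contradicting $|\mu|=m<|\lambda_0|$. To close this gap, the plan is to iterate Theorem~\ref{projectives thm} so as to obtain $P_\lambda\mid R_{\GU_m\times\GL_e^{\times k}}^{\GU_n}(P_\mu\otimes P_{\st_e}^{\otimes k})$ and then establish that the projective indecomposable summands of this induced module lying in the block of $\mu$ are exactly $\{P_{\tilde{a}_\rho(\mu)}:\rho\vdash k\}$. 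The inclusion $\supseteq$ is the content of Theorem~\ref{projectives thm}; the reverse inclusion should follow from matching the cardinality of the Harish-Chandra series of $(\GU_m\times\GL_e^{\times k},X_\mu\otimes\st_e^{\otimes k})$ against the number of depth-$k$ vertices in the $\slinf$-component of $\tau(\mu)$, namely $|\cP(k)|$, together with the unitriangular dominance argument developed in the proof of Theorem~\ref{projectives thm}.
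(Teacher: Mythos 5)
Your outline shares the paper's main ingredients (Corollary \ref{cusp depth bound}, Theorem \ref{projectives thm}, Lemma \ref{proj lemma 2}, and a count of the Harish-Chandra series against $|\cP(k)|$), but it has two genuine gaps. First, the claim that the Harish-Chandra series of $\left(\GU_m(q)\times\GL_e(q^2)^{\times k},X_\mu\otimes\st_e^{\otimes k}\right)$ has exactly $|\cP(k)|$ members is not free: it holds only because the distinguished parameter $p_1'$ of the type $B_k$ Hecke algebra $\End\left(R(X_\mu\otimes\st_e^{\otimes k})\right)$ equals $-1$, and establishing $p_1'=-1$ occupies the entire first half of the paper's proof (one shows that in the depth-one case $R(X_{\lambda_0}\otimes\st_e)$ is indecomposable, because the $\slinf$-component of a source vertex has a unique depth-one vertex, and then invokes \cite[Lemmas 3.15 and 3.16]{GHM2}; alternatively one cites Gruber). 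You never address this parameter, and for other values of $p_1'$ the number of simple Hecke modules is not $|\cP(k)|$.

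Second, and more structurally, your induction is on $n=|\lambda|$, which makes your counting step unavailable. To conclude that the series of $\left(\GU_m(q)\times\GL_e(q^2)^{\times k},X_\mu\otimes\st_e^{\otimes k}\right)$ is exactly $\{X_{\tilde{a}_\rho(\mu)}:\rho\vdash k\}$, you must already know the cuspidal support of each $X_{\tilde{a}_\rho(\mu)}$ with $\rho\vdash k$; these are partitions of $n$ itself, so an inductive hypothesis covering only ranks below $n$ says nothing about them. Theorem \ref{projectives thm} together with Lemma \ref{proj lemma 1} only forces their cuspidal Levi to be contained in $\GU_m(q)\times\GL_e(q^2)^{\times k}$, which still permits the deeper support $\GU_{m-2ej}(q)\times\GL_e(q^2)^{\times(k+j)}$ for $j>0$; and your intermediate claim that the indecomposable projective summands of $R\left(P_\mu\otimes P_{\st_e}^{\otimes k}\right)$ in the relevant block are exactly the $P_{\tilde{a}_\rho(\mu)}$ is false, since such induced projectives generically carry further summands whose heads lie in deeper series (see for instance the character $\Psi'$ computed for $\GU_{13}(q)$ in the appendix). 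The paper avoids both problems by inducting on $|\mu_0|$, the size of the $\slinf$-source vertex: the full statement is then available for every $\tilde{a}_\rho(\mu_0)$ of arbitrary size, in particular of size $n$, as soon as $|\mu_0|<|\lambda_0|$, and combined with the $|\cP(k)|$ count this exhausts the series and yields the contradiction $\lambda_0=\mu_0$. You would need to reorganize your induction along these lines (or add a secondary downward induction on the depth $k$ at fixed $n$) and supply the $p_1'=-1$ step for the argument to close.
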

\begin{proof}
 It is shown in \cite{GHM2} that given $\rG=\GU_{m+2ek}(q)$,  $\rL=\GU_m(q)\times\GL_e(q^2)^{\times k}$ and $\charac(\mathbbm{k})>k$, and $X=X_{\lambda_0}\otimes\st_e^{\otimes k}$ a cuspidal unipotent $\mathbbm{k}\rL$-module, then $\End(R_{\rL}^{\rG}X)$ is a Hecke algebra of type $B_k$ with parameter $1$ associated to the $k-1$ simple reflections generating the subgroup $S_k<B_k$, and some parameter $p_1'$ associated to the other simple reflection \cite[Proposition 4.4]{GHM2}. Furthermore, it follows from \cite[Lemmas 3.15 and 3.16]{GHM2} that $p_1'=-1$ if and only if in the case when $k=1$, $\End\left(R_{\rL}^{\rG}X\right)=\End\left( R_{\rL}^{\rG}\left( X_{\lambda_0}\otimes\st_e\right)\right)$ is indecomposable. Moreover, if this is the case then the isomorphism classes of simple $H_q(B_k)$-modules are in bijection with the isomorphism classes of simple $\mathbbm{k}S_k$-modules \cite[Section 4.10]{GHM2}, so, by our assumption that $\charac(\mathbbm{k})>k$, in bijection with partitions of $k$.

First we will prove that $p_1'=-1$. In fact this is already a theorem of Gruber \cite[Proposition 2.3.5]{Gruber}, so we could just cite that result and move on, but since we have another proof using crystals instead of Green vertices, we give a new proof of his theorem here. Suppose that $X_\lambda$ is weakly cuspidal 
with cuspidal support $\left(\GU_m(q)\times\GL_e(q^2),X_{\mu_0}\otimes \st_e\right)$. By Corollary \ref{cusp depth bound}, the depth of $\lambda$ in the $\slinf$-crystal is either $0$ or $1$. If it is $0$ then $\lambda$ is a source vertex of the $\slinf$- and $\sle$-crystals and then by \cite{DVV2} $X_\lambda$ is a cuspidal $\GU_{m+2e}(q)$-representation, contradicting the assumption about the cuspidal support of $X_\lambda$. Therefore the depth of $\lambda$ in the $\slinf$-crystal has to be $1$, i.e. $\lambda=\tilde{a}_1(\lambda_0)$ for some partition $\lambda_0\vdash n-2e$ such that $X_{\lambda_0}$ is cuspidal. 
  We have $P_{\lambda}\mid R_{\rL}^{\rG}\left(P_{\lambda_0}\otimes P_{\st_e}\right)$ by Theorem \ref{projectives thm}, and $P_{\lambda}\mid R_{\rL}^{\rG}\left( P_{\mu_0}\otimes P_{\st_e}\right)$ by Lemma \ref{proj lemma 0}, 
   where $P_{\lambda_0}$ and $P_{\mu_0}$ are the projective covers of the cuspidal $\mathbbm{k}\GU_m(q)$-modules $X_{\lambda_0}$ and $X_{\mu_0}$, respectively. By Lemma \ref{proj lemma 2}, it follows that $X_{\lambda_0}\cong X_{\mu_0}$ and thus $\lambda_0=\mu_0$. Therefore Theorem \ref{conj} holds when $X_\lambda$ is weakly cuspidal and $\lambda$ has depth $1$ in the $\slinf$-crystal. Moreover, since there is a unique partition $\lambda$ of depth $1$ in the $\slinf$-crystal whose source vertex in the $\slinf$-crystal is $\lambda_0$, $R_{\rL}^{\rG}\left(X_{\lambda_0}\otimes \st_e\right)$ has simple head $X_\lambda$. By \cite[Lemma 3.15]{GHM2}, $R_{\rL}^{\rG}\left(X_{\lambda_0}\otimes \st_e\right)$ is indecomposable. Now it follows that the unknown parameter $p_1'$ for the Hecke algebra $\End\left(R_{\rL}^{\rG}X\right)$ is equal to $-1$ for any $\rL=\GU_m(q)\times\GL_e(q^2)^{\times k}$, $\rG=\GU_{m+2ek}(q)$, and $X=X_{\lambda_0}\otimes\st_e^{\otimes k}$ with $X_{\lambda_0}$ a cuspidal unipotent $\mathbbm{k}\GU_m(q)$-module. 
  
Now we can prove the statement about the cuspidal support of $X_\lambda$. Let $n=|\lambda|$. Suppose that $(\GU_m(q)\times\GL_e(q^2)^{\times k}, X_{\mu_0}\otimes \st_e^{\otimes k})$ is the cuspidal support of $X_\lambda$ for some $k\geq |\sigma|$ and $m\leq |\lambda|$. Then $P_\lambda$ is a direct summand of $R_{\rL}^{\rG}P_{\mu_0}$.
But also by Theorem \ref{projectives thm}, the projective indecomposable $\mathbbm{k}\rG$-module $P_{\tilde{a}_\sigma(\lambda_0)}$ is a direct summand of $R_{\rL}^{\rG}P_{\lambda_0}$, so if $|\mu_0|=|\lambda_0|$ then by Lemma \ref{proj lemma 2} we must have $\mu_0=\lambda_0$. Suppose that $k>|\sigma|$, so $m:=|\mu_0|=|\lambda_0|-2e(k-|\sigma|)<|\lambda_0|$. By induction on $|\mu_0|$ using Corollary \ref{cusp depth bound}(2) as the base case, we assume that for all $\rho\in\cP$, $X_{\tilde{a}_\rho(\mu_0)}$ has cuspidal support $(\GU_m(q)\times\GL_e(q)^{\times j},X_{\mu_0}\otimes\st_e^{\otimes j})$, where $j=|\rho|$. Since we showed above that the parameter $p_1'$ for the Hecke algebra $\End\left(R_{\GU_m(q)\times \GL_e(q^2)^{\times j}}^{\GU_n(q)}(X_{\mu_0}\otimes\st_e^{\otimes k})\right)$ is equal to $-1$, it now follows from \cite[Section 4.10]{GHM2} that if $Y$ is a simple composition factor of the head of $R_{\GU_m(q)\times \GL_e(q^2)^{\times j}}^{\GU_n(q)}(X_{\mu_0}\otimes\st_e^{\otimes k})$ then $Y\cong X_{\tilde{a}_\rho(\mu_0)}$ for some $\rho\vdash k$. By definition of cuspidal support, $X_\lambda$ is a composition factor of the head of $R_{\GU_m(q)\times \GL_e(q^2)^{\times j}}^{\GU_n(q)}(X_{\mu_0}\otimes\st_e^{\otimes k})$. We then have $X_{\tilde{a}_\sigma(\lambda_0)}\cong X_\lambda \cong X_{\tilde{a}_\rho(\mu_0)}$ implying  $\tilde{a}_\sigma(\lambda_0)=\tilde{a}_\rho(\mu_0)$ implying $\lambda_0=\mu_0$ and $\sigma=\rho$, contradicting the assumption that $k>|\sigma|$.
\end{proof}

\begin{corollary}\label{cusp supp cor}
Theorem \ref{conj} is true for any unipotent $\mathbbm{k}\GU_n(q)$-representation $X_\lambda$.
\end{corollary}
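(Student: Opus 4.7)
The plan is to reduce to the weakly cuspidal case established in Theorem~\ref{cusp supp thm}. Given $\lambda\in\cP(n)$, let $r$ denote the depth of $\tau(\lambda)$ in the $\sle$-crystal and pick a sequence of $\sle$-crystal operators $\tilde{e}_{i_1},\dots,\tilde{e}_{i_r}$ taking $\tau(\lambda)$ down to a source vertex; set $\lambda':=\tilde{e}_{i_r}\cdots\tilde{e}_{i_1}(\lambda)$. Then $X_{\lambda'}$ is weakly cuspidal by Theorem~\ref{weakHC crystal}, and Theorem~\ref{cusp supp thm} gives its cuspidal support as $\left(\GU_{|\lambda_0|}(q)\times\GL_e(q^2)^{\times k},\,X_{\lambda_0}\otimes\st_e^{\otimes k}\right)$, where $\lambda_0$ is the common source vertex of the $\sle$- and $\slinf$-crystals in the connected component of $\tau(\lambda')$ and $k$ is its $\slinf$-depth.

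I would then promote $X_{\lambda'}$ back up to $X_\lambda$ by reversing the $\sle$-path. Iterating Theorem~\ref{weakHC crystal} places $X_\lambda$ in the head of $R_{n-2}^{n}R_{n-4}^{n-2}\cdots R_{n-2r}^{n-2r+2}X_{\lambda'}$, which by transitivity of Harish-Chandra induction is $R^{\GU_n(q)}_{\GU_{n-2r}(q)\times\GL_1(q^2)^{\times r}}\!\left(X_{\lambda'}\otimes\st_1^{\otimes r}\right)$. Since $X_{\lambda'}\otimes\st_1^{\otimes r}$ sits in the head of the Harish-Chandra induction from the cuspidal pair identified above, exactness of $R$ and transitivity once more place $X_\lambda$ in the head of
\[
R^{\GU_n(q)}_{\GU_{|\lambda_0|}(q)\times\GL_e(q^2)^{\times k}\times\GL_1(q^2)^{\times r}}\!\left(X_{\lambda_0}\otimes\st_e^{\otimes k}\otimes\st_1^{\otimes r}\right).
\]
Because the inducing module is cuspidal, uniqueness of cuspidal support up to conjugacy pins the above pair down as the cuspidal support of $X_\lambda$.

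The last item is to verify that the triple $(\lambda_0,k,r)$ attached to $\lambda'$ agrees with the triple assigned to $\lambda$ in the statement of Theorem~\ref{conj}. This is where the commutativity of the $\sle$- and $\slinf$-crystals on $\mathcal{F}_{e,\bs_t}$ enters: since $\sle$-crystal operators preserve the connected $\slinf$-component and the $\slinf$-depth, the $\slinf$-source of $\tau(\lambda)$ is the same source as for $\tau(\lambda')$, and the $\slinf$-depth of $\tau(\lambda)$ equals $k$. I do not anticipate any substantive obstacle: the heavy lifting has already been done in Theorem~\ref{cusp supp thm}, and what remains is bookkeeping with Harish-Chandra transitivity and the commutativity of the two crystals.
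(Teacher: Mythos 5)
Your argument is correct and follows essentially the same route as the paper: reduce to the weakly cuspidal case by descending to the source vertex of the $\sle$-crystal (using commutativity of the $\sle$- and $\slinf$-crystals to see that the data $(\lambda_0,k)$ is unchanged), and then invoke Theorem \ref{cusp supp thm}. The only cosmetic difference is that you unpack the reduction step from Theorem \ref{weakHC crystal} together with exactness, transitivity, and uniqueness of cuspidal support, where the paper cites \cite[Theorem B]{DVV1} for it wholesale.
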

\begin{proof} Let $(\rL, X)$ be the cuspidal support of $X_{\lambda}$.
	The main result of \cite[Theorem B]{DVV1} says that the depth of $\lambda$ in the $\sle$-crystal is the number $r$ of factors of $\GL_1(q^2)$ in $\rL$, and that $(\rL,X)$ is the cuspidal support of the weak cuspidal $X_{\widehat{\lambda}}\in \mathbbm{k}\GU_{|\lambda|-2r}(q)$ where $\widehat{\lambda}$ is the source vertex of the connected component of the $\sle$-crystal containing $\lambda$. This reduces the proof of Theorem \ref{conj} to the case that $\lambda$ is weakly cuspidal by transitivity of Harish-Chandra induction and by the fact that the $\sle$- and $\slinf$-crystals commute. Theorem \ref{conj} then follows from Theorem \ref{cusp supp thm}.
\end{proof}

 \section{Appendix: direct checks in small rank, tables of examples, and inducing the projective cover of the Steinberg representation}\label{pile of artichoke leaves with teeth scrapes}
 In this appendix we present some computations, specific examples, and case by case proofs in small rank which we wrote for an earlier version of this paper when Theorem \ref{conj} was a conjecture. We check directly that Theorem \ref{conj} is consistent with the examples in the literature of modular Harish-Chandra series of finite unitary groups, we compute the Harish-Chandra series of weak cuspidals of $\mathbbm{k}\GU_n(q)$ when $e=3$ for $n$ up to $17$ by hand using the methods used by \cite{DM} in their computations of decomposition matrices, and we study the behavior of the projective cover of the Steinberg representation of $\GL_{n}(q^2)$ under Harish-Chandra induction to $\GU_{2n}(q)$ or $\GU_{2n+1}(q)$ when $e$ divides $n$.
 \subsection{Verification in small rank}
 Going one step towards a source vertex in the $\mathfrak{sl}_\infty$-crystal involves removing $e$ boxes from the twisted $2$-quotient of $\lambda$, and so it is removing $2e$ boxes from $\lambda$ itself. Likewise, a factor of $\GL_e(q^2)$ in a Levi subgroup accounts for a subgroup of block matrices of size $2e\times 2e$ (up to conjugation). This means the examples that have been computed in the literature (they go up to $n=10$, see \cite{DM}) provide only the slightest glimpse of the Harish-Chandra series involving type $A$ Levi subgroups or combinatorics other than that suggested by the $\sle$-crystal. On the other hand, computing decomposition matrices together with modular Harish-Chandra series for much bigger $n$ seems to be beyond current limits. We verify Theorem \ref{conj} for all $n\leq 10$ by checking in Table \ref{tab:Unitary3,5} that the cuspidal support predicted by the action of the $\slinf$-crystal agrees with the cuspidal supports found in \cite{DM} in all cases that $\tau(\lambda)$ has nonzero depth in the $\slinf$-crystal. However, because the rank is so small in all the examples that exist, the maximum depth in the $\slinf$-crystal in all of these examples is $1$. For this reason we need to do some work and compute new examples of Harish-Chandra series to see further into the branching graph.
 
 \begin{example} The partition $32^3$ labels a unipotent representation $X_{32^3}$ of $\GU_9(q)$ when $\ell\mid q^2-q+1=\Phi_6(q)=\Phi_{2e}(q)$ with $e=3$, and according to the column for $32^3$ in the decomposition matrix displayed in \cite[Table 11]{DM} the cuspidal support of $X_{32^3}$ is $(\GU_1(q)\times\GL_3(q^2),X_1\otimes\st_3)$ (in the table the rank $1$ factor is omitted because they are working with special unitary groups, not general unitary groups). We explain how to compute the position in the $\sle$- and $\slinf$-crystals of $\tau(32^3)$ when $e=3$. First, we compute that the $2$-quotient of $32^3$ is $\lambda^1.\lambda^2=1^3.1$ and the $2$-core of $32^3$ is $(1)=\Delta_1$. Since $t=1$, $\tau(32^3)=|\lambda^2.\lambda^1,\bs_1\rangle=|1.1^3,(-1,2)\rangle$. Drawing the abacus of $\tau(32^3)$, we then compute its depth in both the $\slinf$- and $\widehat{\mathfrak{sl}}_3$-crystals.  Since the two crystals commute, it does not matter in which order we do this. The first squiggly arrow moves one edge up in the $\slinf$-crystal to a source vertex $|\emp.1, (-1,2)\rangle$ in the $\slinf$-crystal, contributing $(\GL_3(q^2),\st_3)$ to the cuspidal support of $X_{32^3}$. The charged bipartition $|\emp.1, (-1,2)\rangle$ is not a source vertex of the $\widehat{\mathfrak{sl}}_3$-crystal since it is not totally $3$-periodic, so we then make one move up an edge in the $\widehat{\mathfrak{sl}}_3$-crystal to reach the source vertex for both crystals, which is $|\emp.\emp,(-1,2)\rangle=\tau(1)$. 
 	$$
 	\TikZ{[scale=.5]
 		\draw
 		(0,0)node[fill,circle,inner sep=3pt]{}
 		(1,0)node[fill,circle,inner sep=3pt]{}
 		(2,0)node[fill,circle,inner sep=.5pt]{}
 		(3,0)node[fill,circle,inner sep=3pt]{}
 		(4,0)node[fill,circle,inner sep=.5pt]{}
 		(5,0)node[fill,circle,inner sep=.5pt]{}
 		(6,0)node[fill,circle,inner sep=.5pt]{}
 		(7,0)node[fill,circle,inner sep=.5pt]{}
 		(0,1)node[fill,circle,inner sep=3pt]{}
 		(1,1)node[fill,circle,inner sep=3pt]{}
 		(2,1)node[fill,circle,inner sep=3pt]{}
 		(3,1)node[fill,circle,inner sep=.5pt]{}
 		(4,1)node[fill,circle,inner sep=3pt]{}
 		(5,1)node[fill,circle,inner sep=3pt]{}
 		(6,1)node[fill,circle,inner sep=3pt]{}
 		(7,1)node[fill,circle,inner sep=.5pt]{}
 		;}
 	\quad
 	\rightsquigarrow
 	\quad
 	\TikZ{[scale=.5]
 		\draw
 		(0,0)node[fill,circle,inner sep=3pt]{}
 		(1,0)node[fill,circle,inner sep=3pt]{}
 		(2,0)node[fill,circle,inner sep=3pt]{}
 		(3,0)node[fill,circle,inner sep=.5pt]{}
 		(4,0)node[fill,circle,inner sep=.5pt]{}
 		(5,0)node[fill,circle,inner sep=.5pt]{}
 		(6,0)node[fill,circle,inner sep=.5pt]{}
 		(7,0)node[fill,circle,inner sep=.5pt]{}
 		(0,1)node[fill,circle,inner sep=3pt]{}
 		(1,1)node[fill,circle,inner sep=3pt]{}
 		(2,1)node[fill,circle,inner sep=3pt]{}
 		(3,1)node[fill,circle,inner sep=3pt]{}
 		(4,1)node[fill,circle,inner sep=3pt]{}
 		(5,1)node[fill,circle,inner sep=.5pt]{}
 		(6,1)node[fill,circle,inner sep=3pt]{}
 		(7,1)node[fill,circle,inner sep=.5pt]{}
 		;}
 	\quad
 	\rightsquigarrow
 	\quad
 	\TikZ{[scale=.5]
 		\draw
 		(0,0)node[fill,circle,inner sep=3pt]{}
 		(1,0)node[fill,circle,inner sep=3pt]{}
 		(2,0)node[fill,circle,inner sep=3pt]{}
 		(3,0)node[fill,circle,inner sep=.5pt]{}
 		(4,0)node[fill,circle,inner sep=.5pt]{}
 		(5,0)node[fill,circle,inner sep=.5pt]{}
 		(6,0)node[fill,circle,inner sep=.5pt]{}
 		(7,0)node[fill,circle,inner sep=.5pt]{}
 		(0,1)node[fill,circle,inner sep=3pt]{}
 		(1,1)node[fill,circle,inner sep=3pt]{}
 		(2,1)node[fill,circle,inner sep=3pt]{}
 		(3,1)node[fill,circle,inner sep=3pt]{}
 		(4,1)node[fill,circle,inner sep=3pt]{}
 		(5,1)node[fill,circle,inner sep=3pt]{}
 		(6,1)node[fill,circle,inner sep=.5pt]{}
 		(7,1)node[fill,circle,inner sep=.5pt]{}
 		;}
 	$$
 	This verifies Theorem \ref{conj} for the partition $32^3$ when $e=3$.
 \end{example}

 \begin{table}[h]
 	\begin{tabular}{llllllll}
 		\hline
 		&&&&&&&\\
 		$e$\quad &$\lambda$ \quad & $t$ \quad & $|\bla,\mathbf{s}_t\rangle$\quad & $\mathfrak{sl}_\infty$-depth \quad &$\widehat{\mathfrak{sl}}_e$-depth\quad  & Cusp. supp. \quad & Agrees?\\ \hline
 		&&&&&&&\\
 		$3$\quad      &      $2^3$ \quad          & $0$ \quad& $|1.1^2,(-2,0)\rangle$ \quad& $1$\quad &$0$\quad & $(\GL_3(q^2),\st_3)$\quad  & $\checkmark$ \\
 		&&&&&&&\\
 		& $321^2$ & $1$ & $|\emp.1^3,(-1,2)\rangle$ & $1$ & $0$ & $(\GL_3(q^2),\st_3)$ & $\checkmark$ \\
 		&  &  &  &  &  & & \\ 
 		& $2^4$ & $0$ & $|1^2.1^2,(-2,0)\rangle$ & $1$ & $1$ & $(\GL_3(q^2)\times\GL_1(q^2),\st_3\otimes\st_1)$ & $\checkmark$ \\ 
 		&  &  &  &  &  & & \\ 
 		& $2^31^2$ & $0$ & $|1.1^3,(-2,0)\rangle$ & $1$ & $1$ & $(\GL_3(q^2)\times \GL_1(q^2),\st_3\otimes\st_1)$ & $\checkmark$  \\ 
 		&  &  &  &  &  & & \\ 
 		& $432$ & $2$ & $|\emp.1^3,(-3,1)\rangle$ & $1$ & $0$ & $(\GU_3(q)\times\GL_3(q^2),X_{21}\otimes\st_3)$ & $\checkmark$   \\ 
 		&  &  &  &  &  & & \\ 
 		
 		& $3^3$ & $1$ & $|2.1^2,(-1,2)\rangle$ & $1$ & $0$ & $(\GU_3(q)\times\GL_3(q^2),X_{1^3}\otimes\st_3)$ & $\checkmark$  \\ 
 		&  &  &  &  &  & & \\          
 		& $32^3$ & $1$ & $|1.1^3,(-1,2)\rangle$ & $1$ & $1$ & $(\GL_3(q^2)\times\GL_1(q^2),\st_3\otimes\st_1)$ & $\checkmark$  \\        
 		&  &  &  &  &  & & \\                      
 		
 		& $3^31$ & $0$ & $|2^2.1,(-2,0)\rangle$ & $1$ & $0$ & $(\GU_4(q)\times \GL_3(q^2),X_{1^4}\otimes\st_3)$ & $\checkmark$ \\ 
 		&  &  &  &  &  & & \\ 
 		$5$\quad &        $2^5$ \quad          & $0$ \quad& $|1^2.1^3,(-3,0)\rangle$ \quad& $1$\quad &$0$\quad & $(\GL_5(q^2),\st_5)$\quad  & $\checkmark$ \\
 		&  &  &  &  &  & & \\ 
 	\end{tabular}
 	\caption{Comparison of depth in the $\mathfrak{sl}_\infty$-crystal with cuspidal supports for $\lambda$ of depth $1$ in the $\slinf$-crystal when $|\lambda|\leq 10$ and $e=3,5$.}
 	\label{tab:Unitary3,5}
 \end{table}

 We have computed the Harish-Chandra series of all weakly cuspidal $\lambda$ by hand in the cases $n=12,13,15,16$ and $e=3$. We find that the data agrees with Theorem \ref{conj} and we record this information in Tables \ref{tab:Unitary3,n=12}, \ref{tab:Unitary3,n=13}, \ref{tab:Unitary3,n=15}, and \ref{tab:Unitary3,n=16}. For $n=2\mod 3$ there are no weak cuspidals and the statement of Theorem \ref{conj} reduces to the statement for $n-2$.
 
 \begin{theorem}\label{up to 15} Theorem \ref{conj} holds for $\GU_n(q)$ when $e=3$ and $n\leq 17$.
 \end{theorem}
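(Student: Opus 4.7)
The plan is to verify Theorem \ref{conj} for $e=3$ and $n\leq 17$ by a case-by-case examination of weakly cuspidal simple modules. First, using Corollary \ref{cusp supp cor}, the statement for an arbitrary simple $X_\lambda$ follows from the statement for the weakly cuspidal $X_{\widehat\lambda}$ at the source vertex of the $\widehat{\mathfrak{sl}}_3$-crystal component of $\tau(\lambda)$, since by Theorem \ref{weakHC crystal} the $\widehat{\mathfrak{sl}}_3$-crystal encodes exactly the $\GL_1(q^2)$-contribution to the cuspidal support and the two crystals commute. Hence it suffices to verify the claim for every weakly cuspidal $X_\lambda$ with $|\lambda|\leq 17$, i.e.\ for every $\lambda$ whose abacus $\cA|\tau(\lambda),\bs_t\rangle$ is totally $3$-periodic. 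When $n\equiv 2\pmod 3$ there are no new weakly cuspidal $X_\lambda$ of that rank which are not already cuspidal, so such $n$ require no additional work beyond Theorem \ref{dvv cusp}.

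For $n\leq 10$ the verification is accomplished in Table \ref{tab:Unitary3,5}: for each weakly cuspidal $\lambda$ with $\slinf$-depth at least $1$, I would read off the predicted cuspidal support from the position of $\tau(\lambda)$ in the $\slinf$-crystal and compare against the explicit decomposition matrices of Dudas--Malle \cite{DM}; in every case the two agree.

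For $12\leq n\leq 17$ I would enumerate all weakly cuspidal $\lambda$, compute $\tau(\lambda)$ and its $3$-periods on the abacus, and for each one write down both the predicted cuspidal support $(\GU_{|\lambda_0|}(q)\times\GL_3(q^2)^{\times|\sigma|},X_{\lambda_0}\otimes\st_3^{\otimes|\sigma|})$ (where $\lambda=\tilde a_\sigma(\lambda_0)$) and the actual one determined by hand. The hand determination proceeds exactly as in the arguments of Section \ref{heart of the artichoke}: unitriangularity of the decomposition matrix with respect to dominance order identifies $P_\lambda$ inside $R_{\rL}^{\rG}(P_{\lambda_0}\otimes P_{\st_3}^{\otimes|\sigma|})$ as the projective indecomposable labeled by the most dominant unipotent constituent of the relevant block-summand (namely $\tilde b_{1^{3|\sigma|}}(\lambda_0)=\tilde a_{1^{3|\sigma|}}(\lambda_0)$ when $|\sigma|=1$, and iteratively for deeper $\sigma$), then Lemmas \ref{proj lemma 0}--\ref{proj lemma 2} together with the classification of cuspidals (Theorem \ref{dvv cusp}) and the block classification by $e$-cores pin down the cuspidal support uniquely. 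These calculations are tabulated in Tables \ref{tab:Unitary3,n=12}, \ref{tab:Unitary3,n=13}, \ref{tab:Unitary3,n=15}, \ref{tab:Unitary3,n=16}.

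The main obstacle is bookkeeping rather than conceptual: for $n=15,16,17$ the $\slinf$-depth can reach $2$, so I must carefully disentangle which summands of $R_{\rL}^{\rG}(P_{\lambda_0}\otimes P_{\st_3}^{\otimes 2})$ correspond to which weakly cuspidal simples sharing the same $3$-core as $\lambda$. The fact that helps is that the map $(\lambda_0,\sigma)\mapsto\tilde a_\sigma(\lambda_0)$ is injective on the set of vertices of the $\slinf$-crystal lying in a fixed connected component (each component is isomorphic to Young's lattice), so among the projective summands arising from induction there is a unique candidate whose unipotent head equals the predicted $\lambda$; comparing with the cuspidal support forced by Lemma \ref{proj lemma 2} then closes the verification in every case.
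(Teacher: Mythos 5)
Your outline coincides with the paper's in its broad strokes: reduce to weakly cuspidal $X_\lambda$ via the known $\sle$-crystal description of weak Harish-Chandra series, dispose of $n\equiv 2\pmod 3$ by reduction to $n-2$, compare with the Dudas--Malle decomposition matrices for $n\le 10$, and for $12\le n\le 16$ enumerate the weak cuspidals, locate them in the $\slinf$-crystal, and determine their cuspidal supports by hand from induced projective characters, unitriangularity, and elimination. One structural point, though: the paper's proof of Theorem \ref{up to 15} is deliberately an \emph{independent} check, written when Theorem \ref{conj} was still a conjecture; it uses only Theorem \ref{dvv cusp}, Theorem \ref{weakHC crystal}, the tables of \cite{DM}, and direct Pieri-rule computations, and never invokes Theorem \ref{projectives thm}, Theorem \ref{cusp supp thm}, or Corollary \ref{cusp supp cor}. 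By routing your argument through Corollary \ref{cusp supp cor} and ``the arguments of Section \ref{heart of the artichoke}'' you make the statement a corollary of the general theorem --- logically admissible, but it empties the result of its content as a consistency check on that theorem.

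The more substantive issue is the claim that Lemmas \ref{proj lemma 0}--\ref{proj lemma 2} ``pin down the cuspidal support uniquely.'' Exhibiting $P_\lambda$ as a summand of $R_{\rL}^{\rG}\left(P_{\lambda_0}\otimes P_{\st_3}^{\otimes|\sigma|}\right)$ only shows, via Lemma \ref{proj lemma 1}, that the cuspidal support lives on some Levi \emph{contained in} $\rL$, and Lemma \ref{proj lemma 2} can only be applied once one already knows the support sits on $\rL$ itself. The possibility that the support involves additional copies of $\GL_3(q^2)$ and a strictly smaller unitary factor has to be excluded separately; this is exactly the difficulty that forces the induction in Theorem \ref{cusp supp thm} in the general argument, and in the small-rank verification the paper handles it case by case --- e.g.\ for $4^3\vdash 12$ by observing that $\tau(4^3)=2.2^2$ has no column of length $3$ and hence cannot occur in the Pieri expansion of the relevant induced characters, and elsewhere by pigeonhole over the finitely many cuspidal pairs sharing a given $3$-core. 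Without spelling out those eliminations for each of the depth-$1$ and depth-$2$ partitions (or explicitly importing Theorem \ref{cusp supp thm}), the case-by-case verification as you describe it is incomplete.
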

 \begin{proof}
 	The proof for $n\leq 10$ follows by comparing the Harish-Chandra series labeling the columns in the decomposition matrices in \cite{DM} with all weakly cuspidal $\lambda$ for $e=3$ with $|\lambda|\leq 10$, as discussed above and summarized in Table \ref{tab:Unitary3,5}. The conjecture is almost trivially true for $e=5$ and $n\leq 10$ as well, as there is only one partition of depth $1$ in the $\slinf$-crystal in that case.
 	
 	 Set $\st=\st_3\in\mathbbm{k}\GL_3(q^2)$-mod and $\St=\St_3\in K\GL_3(q^2)$-mod.
 	
 	We are going to investigate the Harish-Chandra series of weakly cuspidal unipotent modules of $\GU_n(q)$ for $n=11,12,13,14,15,16,17$ when $e=3$. First of all, for $\lambda\vdash 11 $ or $12$, if the depth of $\lambda$ in the $\sle$-crystal isn't $0$, then since we know the Harish-Chandra series of $\mu\vdash n\leq 10$, we know the Harish-Chandra series of $\lambda$ by \cite{GHJ},\cite{GH},\cite{DVV1}. Then the truth of Theorem \ref{conj} for $n\leq 10$ and the fact that the $\sle$- and $\slinf$-crystals commute \cite{Gerber1} implies that the Harish-Chandra series of $\lambda$ agrees with Theorem \ref{conj}. Thus it is enough to consider the case that $\lambda$ is weakly cuspidal, i.e. of depth $0$ in the $\sle$-crystal. If $\lambda$ is weakly cuspidal, then the $3$-core of $\lambda$ is a $2$-core by \cite[Conjecture 5.5]{GHJ}, so for $n=2$ mod $3$ there cannot be any weak cuspidals. Therefore there is nothing to show for $n=11$, for $n=14$ the conjecture is true if it is true for $n=12$, and for $n=17$ it reduces to the case for $n=15$. We will now check the cases $n=12,13,15,16.$
 	
 	There are $77$ partitions of $12$ and and the $2$-cores that can occur are $\Delta_0$, $\Delta_3$, and $\Delta_4$; for each $\lambda\in\cP(12)$ we apply the bijection $\tau$ to obtain a charged bipartition of size $\frac{12-|\Delta_t|}{2}$ in the Fock space $\cF_{3,\bs_t}$,  $t\in\{0,3,4\}$, $\Delta_t=2\hbox{-core}(\lambda)$. 
 	Of these $77$ charged bipartitions, $13$ have depth $0$ in the $\widehat{\mathfrak{sl}}_3$-crystal. We then compute the depth and source vertex in the $\slinf$-crystal on $\cF_{3,\bs_t}$ of each of these $13$ weakly cuspidal partitions using the results of \cite{GerberN}. This data is summarized in the first four columns of Table \ref{tab:Unitary3,n=12}. In particular, for the three weakly cuspidal partitions $\lambda$ of depth $1$ in the $\slinf$-crystal, we find:
 	\begin{enumerate}
 		\item $\tau(3^31^3)=|2^21.1,(-2,0)\rangle=\tilde{a}_1|1^3.\emp,(-2,0)\rangle$ and $1^3.\emp=\tau(1^6)$ is cuspidal;
 		\item $\tau(432^21)=|\emp.2^3,(-2,0)\rangle=\tilde{a}_1|\emp.1^3,(-2,0)\rangle$ and $\emp.1^3=\tau(21^4)$ is cuspidal;
 		\item $\tau(543)=|\emp.1^3,(-2,3)\rangle=\tilde{a}_1|\emp.\emp,(-2,3)\rangle$ and $\emp.\emp=\tau(321)$ is cuspidal.
 	\end{enumerate}

 	Now we need to check that the modular Harish-Chandra series of the simple representations labeled by these $13$ partitions $\lambda$ match with their positions in the crystal. \cite[Theorem 5.10]{DVV2} says that $X_\lambda$ is cuspidal if and only if $X_\lambda$ is weakly cuspidal and $\tau(\lambda)$ has depth $0$ in the $\slinf$-crystal, so for cuspidals Theorem \ref{conj} is true. There are eight partitions $\lambda$ yielding cuspidals. That leaves five partitions to check: $3^31^3$, $432^21$, $543$, $2^6$, and $4^3$. Of these five, four have $2$-core equal to $\emp=\Delta_0$ while only $543$ has $2$-core $\Delta_3$. Since $(\GU_6(q)\times\GL_3(q^2),\Delta_3\otimes\st)$ is a cuspidal pair, it must be the cuspidal support of at least one simple module of $\GU_{12}(q)$. Since all other $\lambda$ with $2$-core $\Delta_3$ have nonzero depth in the $\sle$-crystal, by \cite[Theorem B]{DVV1} $^*R^{12}_{10}X_\lambda\neq 0$ and consequently the cuspidal support of such $X_\lambda$ is not $(\GU_6(q)\times\GL_3(q^2),\Delta_3\otimes\st)$.
 	Therefore $X_{543}$ is the unique unipotent $\mathbbm{k}\GU_{12}(q)$-module whose cuspidal support is $(\GU_6(q)\times\GL_3(q^2),\Delta_3\otimes\st)$. 
 	
 	Consider the ordinary unipotent character $1^6$ of $\GL_6(q^2)$. It is the unique unipotent constituent of the projective character $[P_{1^6}]$, and $X_{1^6}\in\mathbbm{k}\GL_6(q^2)$-mod has cuspidal support $(\GL_3(q^2)^2,\st^2)$. The unipotent part of the projective character $[R_{\GL_6(q^2)}^{\GU_{12}(q)} P_{1^6}]$ is the following sum of unipotent characters, written as bipartitions using $\tau$ and applying the Pieri rule: $1^6\uparrow=1^6.\emp+1^5.1+1^4.1^2+1^3.1^3+1^2.1^4+1.1^5+\emp.1^6$. We let $\overline{P}$ denote the unipotent part of the character of a projective indecomposable module $P$. Now applying the inverse to $\tau$ we have that the following expression in the Grothendieck group   $$[\overline{R_{\GL_6(q^2)}^{\GU_{12}(q)} P_{1^6}}]=[R_{\GL_6(q^2)}^{\GU_{12}(q)} \overline{P}_{1^6}]=[R_{\GL_6(q^2)}^{\GU_{12}(q)} 1^6]=1^{12}+2^21^8+2^41^4+2^6+2^51^2+2^31^6+21^{10}$$
 	is the unipotent part of a projective character.
 	Discarding the two partitions with non-empty $3$-core, which is the operation on the level of characters given by the exact functor of cutting to the principal block,
 	$$\Psi:=1^{12}+2^41^4+2^6+2^31^6+21^{10}$$
 	is the unipotent part of a projective character.
 	The only partition not labeling a cuspidal in the expression for $\Psi$ is $2^6$. But if $\lambda$ labels a cuspidal then $P_\lambda$ cannot appear as a summand of an induced projective module. It follows that $\Psi=[\overline{P}_{2^6}]$, and therefore that $(\GL_3(q^2)^2,\st^2)$ is the cuspidal support of $2^6$, as predicted by Theorem \ref{conj}.
 	
 	It remains to find the Harish-Chandra series of $X_{3^31^3}$, $X_{432^21}$, and $X_{4^3}$. Let us cast a glance back at the decomposition matrix of $\GU_6(q)$ for $e=3$ given in \cite[Table 8]{DM}. There are two cuspidals: $X_{1^6}$ and $X_{21^4}$. 
 	Moreover, the unipotent part of the projective character $[P_{1^6}]\in\mathbbm{k}\GU_6(q)$-mod is just $1^6$, and the unipotent part of $[P_{21^4}]$ is just $21^4$ \cite[Table 8]{DM}. The cuspidal $X_{1^6}\otimes\st\in\mathbbm{k}\left(\GU_6(q)\times\GL_3(q^2)\right)$-mod must account for the cuspidal support of some unipotent $\mathbbm{k}\GU_{12}(q)$-module, and likewise with the cuspidal $X_{21^4}\otimes\st$. However, 
 	\begin{align*} &[R_{\GU_6(q)\times\GL_3(q^2)}^{\GU_{12}(q)} \overline{P_{1^6}\otimes P_{\st}}]=\tau^{-1}\left(\Ind_{B_3\times S_3}^{B_6} 1^3.\emp \otimes 1^3\right),\\&[R_{\GU_6(q)\times\GL_3(q^2)}^{\GU_{12}(q)} \overline{P_{21^4}\otimes P_{\st}}]=\tau^{-1}\left(\Ind_{B_3\times S_3}^{B_6} \emp.1^3 \otimes 1^3\right) 
 	\end{align*}
 	where $\bs_0$ is taken for the charge. This can be computed using the Pieri rule and in neither case can the bipartition $4^3$ appear in either sum since $\tau(4^3)=2.2^2$ has no column of length $3$. Therefore $P_{4^3}$ is not a summand of $R_{\GU_6(q)\times\GL_3(q^2)}^{\GU_{12}(q)} \overline{P_{1^6}\otimes P_{\st}}$ or $R_{\GU_6(q)\times\GL_3(q^2)}^{\GU_{12}(q)} \overline{P_{21^4}\otimes P_{\st}}$, and therefore $X_{4^3}$ does not have cuspidal support $(\GU_6(q)\times\GL_3(q^2), X_{1^6}\otimes \st)$ or $(\GU_6(q)\times\GL_3(q^2), X_{21^4}\otimes \st)$. The only possibility left is that the cuspidal support of $X_{4^3}$ is $(\GL_3(q^2)^2,\st^2)$. 
 	
 	As for $[R_{\GU_6(q)\times\GL_3(q^2)}^{\GU_{12}(q)} \overline{P_{1^6}\otimes P_{\st}}]$, computing the ordinary unipotent characters occurring then discarding those without empty $3$-core, we find that the maximal partition in lexicographic order occurring is $3^31^3$, implying that $P_{3^31^3}$ is a direct summand of  $R_{\GU_6(q)\times\GL_3(q^2)}^{\GU_{12}(q)} P_{1^6}\otimes P_{\st}$. Therefore the cuspidal support of $X_{3^31^3}$ is $(\GU_6(q)\times\GL_3(q^2),X_{1^6}\otimes\st)$ since $P_{3^31^3}$ does not appear as a summand of the induced projective cover of a simple in a bigger Harish-Chandra series. By the pigeonhole principal, the cuspidal support of $X_{432^21}$ has to be $(\GU_6(q)\times\GL_3(q^2),X_{21^4}\otimes\st)$. This completes the classification of the Harish-Chandra series of $\GU_{12}(q)$ when $e=3$ and confirms Theorem \ref{conj} in that case. By the remarks at the beginning of the proof, this also verifies Theorem \ref{conj} for $\GU_{14}(q)$.

 	\begin{table}[h]
 		\begin{tabular}{lllll}
 			\hline\\
 			$\slinf$-depth \quad\quad  & $t$ \quad\quad \quad  & $\lambda$ \quad\quad\quad   & $\tau(\lambda)$ \qquad \qquad \qquad & cuspidal support \\ \hline\\
 			$0$ \quad \quad \quad\quad  & $0$\quad \quad \quad  & $1^{12}$ \quad\quad \quad  & $|1^6.\emp,(-2,0)\rangle$\qquad \qquad  & $(\GU_{12}(q),\hbox{itself})$\\
 			&&&&\\
 			& \quad \quad \quad  & $21^{10}$ \quad\quad \quad  & $|\emp.1^6,(-2,0)\rangle$& \\
 			&&&&\\
 			& \quad \quad \quad  & $2^31^6$ \quad\quad \quad  & $|1.1^5,(-2,0)\rangle$& \\
 			&&&&\\
 			& \quad \quad \quad  & $2^41^4$ \quad\quad \quad  & $|1^4.1^2,(-2,0)\rangle$ &\\
 			&&&&\\
 			& \quad \quad \quad  & $32^41$ \quad\quad \quad  & $|2^3.\emp
 			,(-2,0)\rangle$ &\\
 			&&&&\\
 			& $3$ & $321^7$ & $|1^3.\emp,(-2,3)\rangle$&\\
 			&&&&\\
 			&  & $3^321$ & $|3.\emp,(-2,3)\rangle$&\\
 			&&&&\\
 			& $4$ & $4321^3$ & $|1.\emp,(-4,2)\rangle$&\\
 			\\
 			\hline
 			&&&&\\
 			$1$ & $0$ & $3^31^3$ & $|2^21.1,(-2,0)\rangle$& $(\GU_6(q)\times \GL_3(q^2), X_{1^6}\otimes\st_3)$\\
 			
 			&&&&\\
 			&  & $432^21$ & $|\emp.2^3,(-2,0)\rangle$& $(\GU_6(q)\times \GL_3(q^2), X_{21^4}\otimes\st_3)$\\
 			&&&&\\
 			& $3$ & $543$ & $|\emp.1^3,(-2,3)\rangle$& $(\GU_6(q)\times\GL_3(q^2),X_{321}\otimes\st_3)$\\
 			&&&&\\
 			\hline
 			&&&&\\
 			$2$ & $0$ & $2^6$ & $|1^3.1^3,(-2,0)\rangle$ & $(\GL_3(q^2)^2, \st_3^2)$\\
 			&&&&\\
 			& & $4^3$ & $|2.2^2,(-2,0)\rangle$ &$(\GL_3(q^2)^2, \st_3^2)$\\
 			&&&&\\
 		\end{tabular}
 		\caption{Depth in the $\slinf$-crystal of all weakly cuspidal partitions  of $12$ (i.e. partitions of depth $0$ in the $\sle$-crystal) and cuspidal supports of the corresponding unipotent $\mathbbm{k}\GU_{12}$-modules when $e=3$. This illustrates Theorem \ref{conj} for $n=12$ and $e=3$.}
 		\label{tab:Unitary3,n=12}
 	\end{table}
 	
 	Let's move on to the case $n=13$. The possible $2$-cores of a partition of $13$ are $\Delta_2$ and $\Delta_1$, so there are two Fock spaces involved, $\cF_{3,1}$ and $\cF_{3,2}$. There are $101$ partitions of $13$ and by computing their charged twisted $2$-quotients we find that only nine of them are weakly cuspidal when $e=3$. Of these nine weakly cuspidal partitions, five are cuspidal, i.e. label a source vertex of both the $\sle$- and $\slinf$-crystals: $4321^4$, $321^8$, $2^61$, $2^31^7$, and $1^{13}$. That leaves four partitions whose Harish-Chandra series need to be sorted out: $5431$, $4^31$, $3^31^4$, and $32^41^2$. In the $\slinf$-crystal on $\mathcal{F}_{e,1}\oplus\mathcal{F}_{e,2}$:
 	\begin{align*}
 	\tau(5431)&=|\emp.2^3,(-1,2)\rangle=
 	\tilde{a}_{(2)}|\emp.\emp,(-1,2)\rangle\\
 	\tau(4^31)&=|3.1^2,(-3,1)\rangle=\tilde{a}_{(1)}|2.\emp,(-3,1)\rangle\\
 	\tau(3^31^4)&=|21^2.1^2,(-1,2)\rangle =\tilde{a}_{(1)}|1^3.\emp,(-1,2)\rangle\\
 	\tau(32^41^2)&=|1^3.1^3,(-1,2)\rangle=\tilde{a}_{(1^2)}|\emp.\emp,(-1,2)\rangle
 	\end{align*}
 	Applying the inverse to $\tau$ to the source vertices on the right-hand-side of these equations, we find the partitions labeling the source vertices are (listed in the same order): 
 	$1,\;
 	2^31,\;
 	1^7,\;
 	1$.

 	The unipotent part of the projective character $[P_{1^7}]$ is just $1^7$. We compute $[R_{\GU_7(q)\times \GL_3(q^2)}^{\GU_{13}(q)}\overline{P_{1^7}\otimes P_{\st}}]=[R_{\GU_7(q)\times \GL_3(q^2)}^{\GU_{13}(q)}1^7\otimes \St]$ then cut to the principal block  
 	obtaining the following projective character:
 	\[\Psi=1^{13}+2^21^9+2^61+3^31^4+32^41^2\]
 	The PIM $P_{3^31^4}$ has to be a summand of $R_{\GU_7(q)\times \GL_3(q^2)}^{\GU_{13}(q)}P_{1^7}\otimes P_{\st}$ because $3^31^4$ is the maximal partition in lexicographical order that occurs in $\Psi$. This implies that the simple labeled by $3^31^4$ has cuspidal support $(\GU_7(q)\times\GL_3(q^2),X_{1^7}\otimes\st)$.
 	Next, using the column labeled $2^31$ of the decomposition matrix of $\GU_7(q)$ \cite[Table 9]{DM}, we compute the projective character $\Psi'$ given by projection to the principal block of $ [R_{\GU_7(q)\times\GL_3(q^2)}^{\GU_{13}(q)} \overline{P}_{2^31}\otimes \St]$:
 	\begin{align*}
 	\Psi'&=2^31^7+43^21^3+4^31+43^3+\\
 	&\qquad2*(1^{13})+2*(2^21^9)+2*(2^61)+2*(3^31^4)+2*(32^41^2)
 	\end{align*}
 	As in the previous argument, $P_{4^31}$ must be a summand of $R_{\GU_7(q)\times\GL_3(q^2)}^{\GU_{13}(q)} P_{2^31}\otimes P_{\st}$ and $X_{4^31}$ then has cuspidal support $(\GU_7(q)\times\GL_3(q^2),X_{2^31}\otimes\st)$. 
 	
 	It remains to determine the Harish-Chandra series of $X_{5431}$ and $X_{32^41}$. Since $5431$ does not appear in either of the expressions $[R_{\GU_7(q)\times\GL_3(q^2)}^{\GU_{13}(q)} 1^7\otimes 1^3]$ or $[R_{\GU_7(q)\times\GL_3(q^2)}^{\GU_{13}(q)} \overline{P}_{2^31}\otimes 1^3]$, we know that $P_{5431}$ is not a summand of either $[R_{\GU_7(q)\times\GL_3(q^2)}^{\GU_{13}(q)} P_{1^7}\otimes P_{\st}$ or $R_{\GU_7(q)\times\GL_3(q^2)}^{\GU_{13}(q)} P_{2^31}\otimes P_{\st}$. 
 	Since $X_{5431}$ does not belong to any other Harish-Chandra series and is not cuspidal, it must belong to the Harish-Chandra series of $(\GU_1(q)\times\GU_3(q^2)^2,X_1\otimes\st^2)$. Next, to deal with $32^41$ we Harish-Chandra induce $1\otimes 1^6$ from $\GU_1(q)\times\GL_6(q^2)$. The simple $\GL_6(q^2)$-module $\st=X_{1^6}$ has cuspidal support $(\GL_3(q^2)^2,\st^2)$, and we know that some projective indecomposable summand of $R_{\GL_3(q^2)^2}^{\GU_{13}(q)}\st^2$
 	is the projective cover of a unipotent $\mathbbm{k}\GU_{13}(q)$-module with cuspidal support $(\GU_1(q)\times\GL_3(q)^2,X_1\otimes\st^2)$.  We find that $5431$ does not occur as a constituent $R_{\GL_6(q^2)}^{\GU_{13}(q)}1\otimes 1^6$
 	but $32^41$ does.  
 	We deduce that $X_{32^41}$ has cuspidal support $(\GU_1(q)\times\GL_3(q)^2,X_1\otimes\st^2)$. This concludes the classification of the Harish-Chandra series of the unipotent $\mathbbm{k}\GU_{13}(q)$-modules. The data matches the prediction of Theorem \ref{conj}. We summarize the positions in the $\slinf$-crystal and the Harish-Chandra series of the weakly cuspidal  unipotent $\mathbbm{k}\GU_{13}(q)$-modules in Table \ref{tab:Unitary3,n=13}.

 	\begin{table}[h]
 		\begin{tabular}{lllll}
 			\hline\\
 			$\slinf$-depth \quad\quad  & $t$ \quad\quad \quad  & $\lambda$ \quad\quad\quad   & $\tau(\lambda)$ \qquad \qquad \qquad & cuspidal support \\ 
 			\hline\\
 			$0$ \quad \quad \quad\quad  & $1$\quad \quad \quad  & $1^{13}$ \quad\quad \quad  & $|1^6.\emp,(-1,2)\rangle$\qquad \qquad  & $(\GU_{13}(q),\hbox{itself})$\\
 			&&&&\\
 			& \quad \quad \quad  & $2^61$ \quad\quad \quad  & $|2^3.\emp,(-1,2)\rangle$& \\
 			&&&&\\
 			& \quad \quad \quad  & $321^8$ \quad\quad \quad  & $|\emp.1^6,(-1,2)\rangle$& \\
 			
 			&&&&\\
 			& $2$ & $2^31^7$ & $|21^3.\emp,(-3,1)\rangle$&\\
 			&&&&\\
 			&  & $4321^4$ & $|\emp.1^5,(-3,1)\rangle$&\\
 			\\
 			\hline
 			&&&&\\
 			$1$ & $1$ & $3^31^4$ & $|21^2.1^2,(-1,2)\rangle$& $(\GU_7(q)\times \GL_3(q^2), X_{1^7}\otimes\st_3)$\\
 			
 			&&&&\\
 			& $2$ & $4^31$ & $|3.1^2,(-3,1)\rangle$ & $(\GU_7(q)\times\GL_3(q^2), X_{2^31}\otimes\st_3)$\\
 			&&&&\\
 			\hline
 			&&&&\\
 			$2$ & $1$ & $ 32^41^2$ & $|1^3.1^3,(-1,2) \rangle$ & $(\GU_1(q)\times \GL_3(q^2)^2, X_1\otimes\st_3^2)$\\
 			&&&&\\
 			& & $5431 $ & $|\emp.2^3, (-1,2)\rangle$ &$(\GU_1(q)\times\GL_3(q^2)^2, X_1\otimes\st_3^2)$\\
 			&&&&\\
 		\end{tabular}
 		\caption{Depth in the $\slinf$-crystal of all weakly cuspidal partitions  of $13$ (i.e. partitions of depth $0$ in the $\sle$-crystal) and cuspidal supports of the corresponding unipotent $\mathbbm{k}\GU_{13}$-modules when $e=3$. This illustrates Theorem \ref{conj} for $n=13$ and $e=3$.}
 		\label{tab:Unitary3,n=13}
 	\end{table}

 	\begin{table}[]
 		\begin{tabular}{lllll}
 			\hline\\
 			$\slinf$-depth \quad\quad  & $t$ \quad\quad \quad  & $\lambda$ \quad\quad\quad   & $\tau(\lambda)$ \qquad \qquad \qquad & cuspidal support \\ 
 			\hline\\
 			$0$ \quad \quad \quad\quad  & $1$\quad \quad \quad  & $1^{15}$ \quad\quad \quad  & $|1^7.\emp,(-1,2)\rangle$\qquad \qquad  & $(\GU_{15}(q),\hbox{itself})$\\
 			&&&&\\
 			& \quad \quad \quad  & $2^41^7$ \quad\quad \quad  & $|2^21^3.\emp,(-1,2)\rangle$& \\
 			&&&&\\
 			& \quad \quad \quad  & $2^61^3$ \quad\quad \quad  & $|2^31.\emp,(-1,2)\rangle$& \\
 			&&&&\\
 			& \quad \quad \quad  & $321^{10}$ \quad\quad \quad  & $|\emp.1^7,(-1,2)\rangle$& \\
 			&&&&\\
 			& \quad \quad \quad  & $3^321^4$ \quad\quad \quad  & $|2.1^5,(-1,2)\rangle$& \\
 			&&&&\\
 			& \quad \quad \quad  & $32^41^4$ \quad\quad \quad  & $|1^4.1^3,(-1,2)\rangle$& \\
 			
 			&&&&\\
 			& $2$ & $21^{13}$ & $|1^6.\emp,(-3,1)\rangle$&\\
 			&&&&\\
 			&  & $2^31^9$ & $|21^4.\emp,(-3,1)\rangle$&\\
 			&&&&\\
 			&  & $2^71$ & $|2^3.\emp,(-3,1)\rangle$&\\
 			&&&&\\
 			&  & $3^421$ & $|3^2.\emp,(-3,1)\rangle$&\\
 			&&&&\\
 			&  & $4321^6$ & $|\emp.1^6,(-3,1)\rangle$&\\
 			
 			&&&&\\
 			& $5$ & $54321$ & $|\emp.\emp,(-3,4)\rangle$&\\
 			\\
 			
 			\hline
 			&&&&\\
 			$1$ & $1$ & $3^31^6$ & $|21^3.1^2,(-1,2)\rangle$&$(\GU_9(q)\times\GL_3(q^2), X_{1^9}\otimes\st_3)$ \\
 			&&&&\\
 			& & $4^321$ & $|3^2.1,(-1,2)\rangle $ & $(\GU_9(q)\times\GL_3(q^2), X_{2^41}\otimes\st_3)$\\
 			&&&&\\
 			& & $5431^3$ & $|\emp.2^31,(-1,2)\rangle $ & $(\GU_9(q)\times\GL_3(q^2), X_{321^4}\otimes\st_3)$\\
 			&&&&\\
 			& $2$ & $432^21^4$ & $|1^3.1^3,(-3,1)\rangle$&$(\GU_9(q)\times\GL_3(q^2), X_{21^7}\otimes\st_3)$ \\
 			&&&&\\
 			&  & $4^31^3 $ & $|31.1^2,(-3,1)\rangle$& $(\GU_9(q)\times\GL_3(q^2), X_{2^31^3}\otimes\st_3)$\\
 			&&&&\\
 			\hline
 			&&&&\\
 			$2$ & $1$ & $3^32^3$ & $|21.1^4,(-1,2)\rangle$ & $(\GU_3(q)\times\GL_3(q^2)^2,X_{1^3}\otimes \st_3^2)$\\
 			&&&&\\
 			& & $5^3$ & $|3.2^2,(-1,2)\rangle$ & $(\GU_3(q)\times\GL_3(q^2)^2,X_{1^3}\otimes \st_3^2)$\\
 			&&&&\\
 			& $2$ & $432^4$ & $|1^2.1^4,(-3,1)\rangle$ & $(\GU_3(q)\times\GL_3(q^2)^2,X_{21}\otimes \st_3^2)$\\
 			&&&&\\
 			& & $654$ & $|\emp.2^3,(-3,1)\rangle$ & $(\GU_3(q)\times\GL_3(q^2)^2,X_{21}\otimes \st_3^2)$\\
 			&&&&\\
 			
 		\end{tabular}
 		\caption{Depth in the $\slinf$-crystal of all weakly cuspidal partitions  of $15$ (i.e. partitions of depth $0$ in the $\sle$-crystal) and cuspidal supports of the corresponding unipotent $\mathbbm{k}\GU_{15}$-modules when $e=3$. This illustrates Theorem \ref{conj} for $n=15$ and $e=3$.}
 		\label{tab:Unitary3,n=15}
 	\end{table}

 	\begin{table}[h]
 		\begin{tabular}{lllll}
 			\hline\\
 			$\slinf$-depth \quad\quad  & $t$ \quad\quad \quad  & $\lambda$ \quad\quad\quad   & $\tau(\lambda)$ \qquad \qquad \qquad & cuspidal support \\ \hline\\
 			$0$ \quad \quad \quad\quad  & $0$\quad \quad \quad  &  $1^{16}$\quad\quad \quad  & $|1^8.\emp,(-2,0)\rangle$\qquad \qquad  & $(\GU_{16}(q),\hbox{itself})$\\
 			&&&&\\
 			& \quad \quad \quad  & $2^31^{10}$\quad\quad \quad  & $|1.1^7,(-2,0)\rangle$& \\
 			&&&&\\
 			& \quad \quad \quad  & $2^61^4$\quad\quad \quad  & $|1^5.1^3,(-2,0)\rangle$ &\\
 			&&&&\\
 			& \quad \quad \quad  & $32^41^5$\quad\quad \quad  & $|2^31^2.\emp,(-2,0)\rangle$& \\
 			&&&&\\
 			& \quad \quad \quad  & $432^41$ \quad\quad \quad  & $|\emp.2^4,(-2,0)\rangle$  &\\
 			&&&&\\
 			& $3$ & $321^{11}$ &$|1^5.\emp,(-2,3)\rangle$ &\\
 			&&&&\\
 			&  &$3^32^31$ &$|32.\emp,(-2,3)\rangle$ &\\
 			&&&&\\
 			& $4$ & $4321^7$&$|1^3.\emp,(-4,2)\rangle$ &\\
 			\\
 			\hline
 			&&&&\\
 			$1$ & $0$ & $3^31^7$ & $|2^21^3.1,(-2,0)\rangle$& $(\GU_{10}(q)\times \GL_3(q^2), X_{1^{10}}\otimes\st_3)$\\
 			&&&&\\
 			&  & $4^31^4$ & $|2.2^21^2,(-2,0)\rangle$& $(\GU_{10}(q)\times \GL_3(q^2), X_{2^31^4}\otimes\st_3)$\\
 			&&&&\\
 			& $3$ & $5431^4$ & $|1^2.1^3,(-2,3)\rangle$& $(\GU_{10}(q)\times \GL_3(q^2), X_{321^5}\otimes\st_3)$\\
 			&&&&\\
 			& $4$ & $6541$ & $|\emp.1^3,(-4,2)\rangle$& $(\GU_{10}(q)\times \GL_3(q^2), X_{4321}\otimes\st_3)$\\
 			\\
 			\hline
 			&&&&\\
 			$2$ & $0$ & $3^421^2$ & $|2^2.1^4,(-2,0)\rangle$& $(\GU_{4}(q)\times \GL_3(q^2)^2, X_{1^4}\otimes\st_3^2)$\\
 			&&&&\\
 			&  & $5^31$ & $|3^2.2,(-2,0)\rangle$& $(\GU_{4}(q)\times \GL_3(q^2)^2, X_{1^4}\otimes\st_3^2)$\\
 			\\
 		\end{tabular}
 		\caption{Depth in the $\slinf$-crystal of all weakly cuspidal partitions  of $16$ (i.e. partitions of depth $0$ in the $\sle$-crystal) and cuspidal supports of the corresponding unipotent $\mathbbm{k}\GU_{16}$-modules when $e=3$. This illustrates Theorem \ref{conj} for $n=16$ and $e=3$.}
 		\label{tab:Unitary3,n=16}
 	\end{table}

 
 	\begin{table}[]
 		\begin{tabular}{lllll}
 			\hline\\
 			$\slinf$-depth \quad\quad  & $t$ \quad\quad \quad  & $\lambda$ \quad\quad\quad   & $\tau(\lambda)$ \qquad \qquad \qquad & cuspidal support \\ 
 			\hline\\
 			$0$ \quad \quad \quad\quad  & $0$\quad \quad \quad  & $1^{18}$ \quad\quad \quad  & $|1^9.\emp,(-2,0)\rangle$\qquad \qquad  & $(\GU_{18}(q),\hbox{itself})$\\
 			&&&&\\
 			& \quad \quad \quad  &  $21^{16}$\quad\quad \quad  & $|\emp.1^9,(-2,0)\rangle$& \\
 			&&&&\\
 			& \quad \quad \quad  & $2^31^{12}$ \quad\quad \quad  & $|1.1^8,(-2,0)\rangle$& \\
 			&&&&\\
 			& \quad \quad \quad  & $2^41^{10}$ \quad\quad \quad  & $|1^7.1^2,(-2,0)\rangle$& \\
 			&&&&\\	
 			& \quad \quad \quad  & $2^61^6$ \quad\quad \quad  & $|1^6.1^3,(-2,0)\rangle$& \\
 			&&&&\\
 			& \quad \quad \quad  & $2^71^4$ \quad\quad \quad  & $|1^3.1^6,(-2,0)\rangle$& \\
 			&&&&\\
 			& \quad \quad \quad  & $32^41^7$ \quad\quad \quad  & $|2^31^3.\emp,(-2,0)\rangle$& \\
 			&&&&\\
 			& \quad \quad \quad  & $3^32^41$ \quad\quad \quad  & $|2^4.1,(-2,0)\rangle$& \\
 			&&&&\\
 			& \quad \quad \quad  & $3^421^4$ \quad\quad \quad  & $|2^2.1^5,(-2,0)\rangle$& \\
 			&&&&\\
 			& \quad \quad \quad  & $432^41^3$ \quad\quad \quad  & $|\emp.2^41,(-2,0)\rangle$& \\
 			&&&&\\
 			\quad \quad \quad\quad  & $3$\quad \quad \quad  & $321^{13}$ \quad\quad \quad  & $|1^6.\emp,(-2,3)\rangle$\qquad \qquad  & \\
 			&&&&\\
 			\quad \quad \quad\quad  & \quad \quad \quad  & $32^71$ \quad\quad \quad  & $|2^3.\emp,(-2,3)\rangle$\qquad \qquad  & \\
 			&&&&\\
 			\quad \quad \quad\quad  & \quad \quad \quad  & $3^32^31^3$ \quad\quad \quad  & $|321.\emp,(-2,3)\rangle$\qquad \qquad  & \\
 			&&&&\\
 			\quad \quad \quad\quad  & \quad \quad \quad  & $3^321^7$ \quad\quad \quad  & $|31^3.\emp,(-2,3)\rangle$\qquad \qquad  & \\
 			&&&&\\
 			\quad \quad \quad\quad  & \quad \quad \quad  & $54321^4$ \quad\quad \quad  & $|\emp.1^6,(-2,3)\rangle$\qquad \qquad  & \\
 			&&&&\\
 			\quad \quad \quad\quad  & $4$\quad \quad \quad  & $4321^9$ \quad\quad \quad  & $|1^4.\emp,(-4,2)\rangle$\qquad \qquad  & \\
 			&&&&\\
 			\quad \quad \quad\quad  & \quad \quad \quad  & $4^3321$ \quad\quad \quad  & $|4.\emp,(-4,2)\rangle$\qquad \qquad  & \\
 			&&&&\\
 			\quad \quad \quad\quad  & \quad \quad \quad  & $432^51$ \quad\quad \quad  & $|2^2.\emp,(-4,2)\rangle$\qquad \qquad  & \\
 		\end{tabular}
 		\caption{The $18$ partitions of $18$ labeling cuspidal unipotent $\GU_{18}(q)$-representations when $e=3$.}
 		\label{tab:Unitary3,n=18, Cuspidals}
 	\end{table}

 	\begin{table}[]
 		\begin{tabular}{lllll}
 			\hline\\
 			$\slinf$-depth \quad\quad  & $t$ \quad\quad \quad  & $\lambda$ \quad\quad\quad   & $\tau(\lambda)$ \qquad \qquad \qquad & expected cuspidal support \\ 
 			\hline\\
 			
 			&&&&\\
 			$1$ & $0$ & $3^31^9$ & $|2^21^4.1,(-2,0)\rangle$ & $(\GU_{12}(q)\times\GL_3(q^2),X_{1^{12}}\otimes \st_3)$\\
 			&&&&\\
 			&  & $432^21^7$ & $|\emp.2^31^3,(-2,0)\rangle$ & $(\GU_{12}(q)\times\GL_3(q^2),X_{21^{10}}\otimes \st_3)$\\
 			&&&&\\
 			&  & $4^31^6$ & $|2.2^21^3,(-2,0)\rangle$ & $(\GU_{12}(q)\times\GL_3(q^2),X_{2^31^6}\otimes \st_3)$\\
 			&&&&\\
 			&  & $4^321^4$ & $|21^3.2^2,(-2,0)\rangle$ & $(\GU_{12}(q)\times\GL_3(q^2),X_{2^41^4}\otimes \st_3)$\\
 			&&&&\\
 			&  & $543^221$ & $|3^3.\emp,(-2,0)\rangle$ & $(\GU_{12}(q)\times\GL_3(q^2),X_{32^41}\otimes \st_3)$\\
 			&&&&\\
 			& $3$ & $5431^6$ & $|1^3.1^3,(-2,3)\rangle$ & $(\GU_{12}(q)\times\GL_3(q^2),X_{321^7}\otimes \st_3)$\\
 			&&&&\\
 			&  & $5^321$ & $|4.1^2,(-2,3)\rangle$ & $(\GU_{12}(q)\times\GL_3(q^2),X_{3^321}\otimes \st_3)$\\
 			&&&&\\
 			& $4$ & $6541^3$ & $|1.1^3,(-4,2)\rangle$ & $(\GU_{12}(q)\times\GL_3(q^2),X_{4321^3}\otimes \st_3)$\\
 			&&&&\\
 			
 			\hline
 			&&&&\\
 			$2$ & $0$ & $3^6$ & $|2^3.1^3,(-2,0)\rangle$ & $(\GU_{6}(q)\times\GL_3(q^2)^2,X_{1^6}\otimes \st_3^2)$\\
 			&&&&\\
 			&  & $5^31^3$ & $|3^21.2,(-2,0)\rangle$ & $(\GU_{6}(q)\times\GL_3(q^2)^2,X_{1^6}\otimes \st_3^2)$\\
 			&&&&\\
 			&  & $43^42$ & $|1^3.2^3,(-2,0)\rangle$ & $(\GU_{6}(q)\times\GL_3(q^2)^2,X_{21^4}\otimes \st_3^2)$\\
 			&&&&\\
 			&  & $65421$ & $|\emp.3^3,(-2,0)\rangle$ & $(\GU_{6}(q)\times\GL_3(q^2)^2,X_{21^4}\otimes \st_3^2)$\\
 			&&&&\\
 			& $3$ & $5432^3$ & $|1.1^5,(-2,3)\rangle$ & $(\GU_{6}(q)\times\GL_3(q^2)^2,X_{321}\otimes \st_3^2)$\\
 			&&&&\\
 			&  & $765$ & $|\emp.2^3,(-2,3)\rangle$ & $(\GU_{6}(q)\times\GL_3(q^2)^2,X_{321}\otimes \st_3^2)$\\
 			&&&&\\
 			
 			\hline
 			&&&&\\
 			$3$ & $0$ & $2^9$ & $|1^4.1^5,(-2,0)\rangle$ & $(\GL_3(q^2)^3,\st_3^3)$\\
 			&&&&\\
 			&  & $4^32^3$ & $|21^2.2^21,(-2,0)\rangle$ & $(\GL_3(q^2)^3,\st_3^3)$\\
 			&&&&\\
 			&  & $6^3$ & $|3.3^2,(-2,0)\rangle$ & $(\GL_3(q^2)^3,\st_3^3)$\\
 			&&&&\\
 		\end{tabular}
 		\caption{Depth in the $\slinf$-crystal of all weakly cuspidal partitions  of $18$  of depths $1$, $2$, and $3$ in the $\slinf$-crystal, and cuspidal supports of the corresponding unipotent $\mathbbm{k}\GU_{18}$-modules when $e=3$. Together with Table \ref{tab:Unitary3,n=18, Cuspidals} this illustrates Theorem \ref{conj} for $n=18$ and $e=3$.}
 		\label{tab:Unitary3,n=18 depth 1,2,3}
 	\end{table}
 	
 	\begin{table}[]
 		\begin{tabular}{lllll}
 			\hline\\
 			$\slinf$-depth \quad\quad  & $t$ \quad\quad \quad  & $\lambda$ \quad\quad\quad   & $\tau(\lambda)$ \qquad \qquad \qquad & cuspidal support \\ 
 			\hline\\
 			$0$ \quad \quad \quad\quad  & $1$\quad \quad \quad  & $1^{19}$ \quad\quad \quad  & $|1^9.\emp,(-1,2)\rangle$\qquad \qquad  & $(\GU_{19}(q),\hbox{itself})$\\
 			&&&&\\
 			& \quad \quad \quad  & $2^61^7$ \quad\quad \quad  & $|2^31^3.\emp,(-1,2)\rangle$& \\
 			&&&&\\
 			& \quad \quad \quad  &  $321^{14}$\quad\quad \quad  & $|\emp.1^9,(-1,2)\rangle$& \\
 			&&&&\\
 			& \quad \quad \quad  &  $32^41^8$\quad\quad \quad  & $|1^6.1^3,(-1,2)\rangle$& \\
 			&&&&\\
 			& \quad \quad \quad  & $3^32^31^4$ \quad\quad \quad  & $|21.1^6,(-1,2)\rangle$& \\
 			&&&&\\
 			& \quad \quad \quad  &  $43^421$ \quad\quad \quad  & $|3^3.\emp,(-1,2)\rangle$& \\
 			
 			&&&&\\
 			& $2$ & $2^31^{13}$ & $|21^6.\emp,(-3,1)\rangle$&\\
 			&&&&\\
 			&  & $2^91$ & $|2^4.\emp,(-3,1)\rangle$&\\
 			&&&&\\
 			&  & $3^421^5$ & $|3^21^2.\emp,(-3,1)\rangle$&\\
 			&&&&\\
 			&  & $4321^{10}$ & $|\emp.1^8,(-3,1)\rangle$&\\
 			&&&&\\
 			&  & $432^41^4$  & $|1^4.1^4,(-3,1)\rangle$&\\
 			
 			&&&&\\
 			& $5$ & $5432^31$ & $|2.\emp,(-3,4)\rangle$&\\
 			\\

 		\end{tabular}
 		\caption{The $12$ partitions of $19$ labeling cuspidal unipotent $\GU_{19}(q)$-representations when $e=3$.}
 		\label{tab:Unitary3,n=19, Cuspidals}
 	\end{table}

 	\begin{table}[]
 		\begin{tabular}{lllll}
 			\hline\\
 			$\slinf$-depth \quad\quad  & $t$ \quad\quad \quad  & $\lambda$ \quad\quad\quad   & $\tau(\lambda)$ \qquad \qquad \qquad & expected cuspidal support \\ 
 			\hline
 			&&&&\\
 			$1$ & $1$ & $3^31^{10}$ & $|21^5.1^2,(-1,2)\rangle$&$(\GU_{13}(q)\times\GL_3(q^2), X_{1^{13}}\otimes\st_3)$ \\
 			&&&&\\
 			& &$4^32^31$  & $|3^22.1,(-1,2)\rangle$&$(\GU_{13}(q)\times\GL_3(q^2), X_{2^61}\otimes\st_3)$ \\
 			&&&&\\
 			& & $5431^7$ & $|\emp.2^31^3,(-1,2)\rangle$&$(\GU_{13}(q)\times\GL_3(q^2), X_{321^8}\otimes\st_3)$ \\
 			&&&&\\
 			& $2$ & $4^31^7$ & $|31^3.1^2,(-3,1)\rangle$&$(\GU_{13}(q)\times\GL_3(q^2), X_{2^31^7}\otimes\st_3)$ \\
 			&&&&\\
 			&  & $6541^4$ & $|\emp.2^31^2,(-3,1)\rangle$&$(\GU_{13}(q)\times\GL_3(q^2), X_{4321^4}\otimes\st_3)$ \\
 			&&&&\\
 			
 			\hline\\

 			$2$ & $1$ & $3^61$ & $|2^3.1^3,(-1,2)\rangle$ & $(\GU_{7}(q)\times\GL_3(q^2)^2,X_{1^7}\otimes \st_3^2)$\\
 			&&&&\\
 			&  & $5^31^4$ & $|31^2.2^2,(-1,2)\rangle$ & $(\GU_{7}(q)\times\GL_3(q^2)^2,X_{1^7}\otimes \st_3^2)$\\
 			&&&&\\
 			& $2$ & $4^3321^2$  & $|3.1^5,(-3,1)\rangle$ & $(\GU_{7}(q)\times\GL_3(q^2)^2,X_{2^31}\otimes \st_3^2)$\\
 			&&&&\\
 			&  & $6^31$ & $|4.2^2,(-3,1)\rangle$ & $(\GU_{7}(q)\times\GL_3(q^2)^2,X_{2^31}\otimes \st_3^2)$\\
 			&&&&\\
 			
 			\hline
 			&&&&\\
 			$3$ & $1$ & $32^71^2$ & $|1^3.1^6,(-1,2)\rangle$ & $(\GU_{1}(q)\times\GL_3(q^2)^3,X_{1}\otimes \st_3^3)$\\
 			&&&&\\
 			& & $543^221^2$ & $|1^3.2^3,(-1,2)\rangle$ & $(\GU_{1}(q)\times\GL_3(q^2)^3,X_{1}\otimes \st_3^3)$\\
 			&&&&\\
 			&  & $7651$ & $|\emp.3^3,(-1,2)\rangle$ & $(\GU_{1}(q)\times\GL_3(q^2)^3,X_{1}\otimes \st_3^3)$\\
 			&&&&\\
 		\end{tabular}
 		\caption{Depth in the $\slinf$-crystal of all weakly cuspidal partitions  of $19$  of depths $1$, $2$ and $3$ in the $\slinf$-crystal, and cuspidal supports of the corresponding unipotent $\mathbbm{k}\GU_{19}$-modules when $e=3$. Together with Table \ref{tab:Unitary3,n=19, Cuspidals} this  illustrates Theorem \ref{conj} for $n=19$ and $e=3$.}
 		\label{tab:Unitary3,n=19 depth 1,2,3}
 	\end{table}

 	The case $n=15$ follows from similar arguments; the data on Harish-Chandra series of weak cuspidals and their depths in the $\slinf$-crystal is summarized in Table \ref{tab:Unitary3,n=15}.
 	There are $12$ cuspidals. We begin by checking the weakly cuspidal partitions of depth $1$ in the $\slinf$-crystal. Recall from \cite[Table 11]{DM} that there are $5$ cuspidals of $\GU_9(q)$, given by $1^9,\;21^7,\;,2^31^3,\;2^41,\;321^4$. We will check that for $\lambda$ one of these five partitions, $R_{\GU_9(q)\times\GL_3(q^2)}^{\GU_{15}(q)}X_\lambda\otimes\st$ is indecomposable with simple head $X_{\tilde{a}_1(\lambda)}$, where we write $\tilde{a}_1(\lambda)$ as shorthand for $\tau^{-1}(\tilde{a}_1(\tau(\lambda)))$.
 	
 	The partition $1^9$ is the unipotent part of the projective character of $P_{1^9}$, and $\tau(1^9)=|1^4.\emp,(-1,2)\rangle$. Then $\tilde{a}_1(\tau(1^9))=|21^3.1^2,(-1,2)\rangle$, and $\tau^{-1}(\tilde{a}_1(\tau(1^9)))=3^31^6$. We compute $\tau^{-1}(\Ind_{B^4\times S_3}^{B_7}1^4.\emp\otimes 1^3)$ to obtain that the unipotent part of the projective character of  $P=R_{\GU_9(q)\times\GL_3(q^2)}^{\GU_{15}(q)}P_{1^9}\otimes P_{\st}$ is given by
 	\(\Psi= 2^61^3+2^41^7+1^{15}+32^41^4+3^31^6+31^{12}+3^21^9\). 
 	Since $3^31^6$ dominates the other partitions, $P_{3^31^6}\mid P$. Moreover, $2^61^3,\;2^41^7,\;1^{15},\;32^41^4$ all label cuspidals, while $31^{12}$ and $3^21^9$ have nonzero depth in the $\sle$-crystal. It follows that among simples labeled by these partitions only $X_{3^31^6}$ can belong to the same Harish-Chandra series as $X_{1^9}\otimes \st$.
 	
 	The partition $21^7$ is the unipotent part of the projective character of $P_{21^7}$, and $\tau(21^7)=|1^3.\emp,(-3,1)\rangle$. As in the previous computation, \(\Psi=2^71+2^31+21^{13}+432^21^4+41^{11}\) is the unipotent part of the projective character obtained by Harish-Chandra inducing $P_{21^7}\otimes P_{\st}$ then cutting to the principal block. Since $432^21^4$ dominates, $P_{432^21^4}$ is a summand, and moreover $432^21^4=\tau^{-1}(|1^3.1^3,(-3,1)\rangle)=\tau^{-1}(\tilde{a}_1(\tau(21^7)))$. All other partitions appearing in $\Psi$ either label cuspidals or have nonzero depth in the $\sle$-crystal. The claim follows for $21^7$.
 	
 	The remaining three partitions to consider label projectives with more complicated characters. Write $\rL=\GU_9(q)\times\GL_3(q^2)$ and $\rG=\GU_{15}(q)$. Here, what we do is compose projection to the principal block with Harish-Chandra induction from $\rL$ of the unipotent part of the projective characters which are non-negative linear combinations of $\lambda$, $21^7$, and $1^9$, and observe, that of all $\mu\vdash 15$ appearing in $R_{\rL}^{\rG}P_\lambda$, the most dominant in each case is $\tilde{a}_1(\lambda)$. This implies $X_{\tilde{a}_1(\lambda)}$ has the desired cuspidal support. Moreover, the four partitions of depth $2$ in the $\slinf$-crystal do not appear in these characters at all. This implies the claim.
 	
 	All that's left to check are the Harish-Chandra series of the four weakly cuspidal partitions of depth $2$ in the $\slinf$-crystal. Computing as usual, we observe that $5^3$ dominates among all $\mu\vdash 15$ appearing in $R_{L}^GP_{3^3}\otimes P_{\st}$, while $3^32^3$ dominates among all $\mu\vdash 15$ appearing in $R_{\GU_3(q)\times\GL_6(q^2)}^{\GU_{15}(q)}P_{1^3}\otimes P_{\st}$, which implies (since they do not belong to a bigger Harish-Chandra series) that $X_{5^3}$ and $X_{3^32^3}$ are supported on $(\GU_3(q)\times\GL_3(q^2)^2,X_{1^3}\otimes \st^2)$ (this is the cuspidal support of $X_{3^3}\otimes\st$). Likewise, $654$ dominates in $R_{L}^GP_{3^3}\otimes P_{\st}$, $432^4$ dominates in $R_{\GU_3(q)\times\GL_6(q^2)}^{\GU_{15}(q)}P_{21}\otimes P_{\st}$, the cuspidal support of $X_{3^3}\otimes\st$ is $\GU_3(q)\times\GL_3(q^2)^2, X_{21}\otimes\st^2)$, confirming that $X_{654}$ and $X_{432^4}$ are supported on $(\GU_3(q)\times\GL_3(q^2)^2,X_{21}\otimes \st^2)$. This concludes the check that the $\slinf$-crystal describe the Harish-Chandra branching rule on weak cuspidals for $\GU_{15}(q)$ when $e=3$, implying Theorem \ref{conj} is true for $\GU_{15}(q)$ when $e=3$.
 	
 	The $n=16$ case is more straightforward than the $n=15$ case and uses similar arguments; the results are summarized in Table \ref{tab:Unitary3,n=16}.
 \end{proof}

 \subsection{Inducing the Steinberg representation from a maximal type $A$ Levi subgroup}
 One of the most-studied modules in characteristic $\ell$ is the Steinberg PIM $P_{\st_n}=P_{1^{n}}$ of $\GL_n(q)$, the projective cover of the irreducible Steinberg representation $\st_n=X_{1^n}\in\mathbbm{k}\GL_n(q)$-mod  \cite{Ackermann},\cite{Enomoto},\cite{Geck2}. We are going to study $R_{\GU_\iota(q)\times\GL_n(q^2)}^{\GU_{2n+\iota}(q)}P_{\iota}\otimes P_{\st_n}$  when $e\mid n$. This will allow us to check directly that 
 $R_{\GU_\iota(q)\times\GL_n(q^2)}^{\GU_{2n+\iota}(q)}X_{\iota}\otimes \st_n$  
 agrees with Theorem \ref{conj} for any $n\in\N$. 
 
 We need the following combinatorial lemma, which is easily proved using the characterization of the $e$-core of a partition in terms of its $e$-abacus as in  \cite{James}:
 \begin{lemma}\label{combinat2col}	Suppose $e\in\N_{\geq 3}$ is odd and $e\mid m$. \begin{enumerate}
 		\item The $e$-core of $2^j1^{2m-2j}$ is equal to $\emp$ if and only if $j=0$ or $1$ mod $e$.
 		\item The $e$-core of $32^j1^{2m-2j-2}$ is equal to $(1)$ if and only if $j=1$ or $-2$ mod $e$.
 	\end{enumerate}
 \end{lemma}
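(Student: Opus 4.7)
The plan is to compute, in each case, the $e$-abacus of the partition explicitly and read off the condition on $j$ from runner counts. Recall that for a partition $\lambda$ padded with zeros to have $N$ parts, the $\beta$-numbers are $\beta_i=\lambda_i+N-i$, distributed on $e$ runners by residue mod $e$, and the $e$-core is determined by the multiset of runner occupancies (pushing beads up each runner yields the $e$-core).

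For part (1), I would take $\lambda=2^j 1^{2m-2j}$ with $N=2m$. A direct calculation shows that the $\beta$-set equals $\{0,1,\dots,2m+1\}\setminus\{j,\ 2m-j+1\}$. Writing $2m=ke$ (using $e\mid m$), the empty partition with the same $N=2m$ has $\beta$-set $\{0,\dots,2m-1\}$, with exactly $k$ beads on each runner. In our $\beta$-set, the ``extra'' positions $2m$ and $2m+1$ lie on runners $0$ and $1$ respectively, so the only way to obtain the required occupancies $(k,k,\dots,k)$ is for the two deleted positions $j$ and $2m-j+1$ to sit, one on runner $0$ and one on runner $1$. Since $2m\equiv 0\pmod e$, both $j\equiv 0$ and $j\equiv 1\pmod e$ produce a valid pairing (and these are the only possibilities once one rules out both deletions landing on the same runner, which would force unequal counts). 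This yields the claim.

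For part (2), I would analogously take $\lambda=32^j 1^{2m-2j-2}$ with $N=2m$ and compute the $\beta$-set as $\{0,1,\dots,2m+2\}\setminus\{j+1,\ 2m-j,\ 2m+1\}$. The partition $(1)$, with the same $N=2m$, has runner occupancy vector $(k+1,k,\dots,k,k-1)$ indexed by runners $0,1,\dots,e-1$. The position $2m+1$ automatically accounts for one deletion on runner $1$, while $2m+2$ contributes an extra bead on runner $2$ that must be canceled; together the two remaining deletions $j+1$ and $2m-j$, which sit on runners $(j+1)\bmod e$ and $(-j)\bmod e$, must eliminate that extra on runner $2$ and produce the deficit on runner $e-1$. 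Since $(j+1)+(-j)\equiv 1\pmod e$, solving this ``residue matching'' reduces to $j\equiv 1$ or $j\equiv -2\pmod e$, which is the stated condition.

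I expect the main obstacle to be bookkeeping, not conceptual difficulty: one must track the three ``extra'' positions $2m, 2m+1, 2m+2$ that lie above the water line for $N=2m$, verify that no two of $\{j+1,\ 2m-j,\ 2m+1\}$ collide (which uses parity plus $j\le m-1$), and handle the small case $e=3$ separately, where $1\equiv -2\pmod 3$ causes the two residue classes in part (2) to coincide and both deletions on runner $2$ coming from $j\equiv 1$ simultaneously supply the required extra subtraction. Once these cases are cleanly separated, the reduction of the lemma to elementary modular arithmetic is immediate.
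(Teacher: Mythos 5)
Your proposal is correct: the $\beta$-set computations ($\{0,\dots,2m+1\}\setminus\{j,2m-j+1\}$ in part (1) and $\{0,\dots,2m+2\}\setminus\{j+1,2m-j,2m+1\}$ in part (2)) check out, and the runner-occupancy matching against $\emp$ and $(1)$ with $N=2m\equiv 0\pmod e$ beads gives exactly the stated congruences, including the correct handling of $e=3$ where $1\equiv-2$. This is precisely the route the paper indicates — it states the lemma with only the remark that it is "easily proved using the characterization of the $e$-core in terms of its $e$-abacus" and supplies no further details — so your write-up simply fills in the bookkeeping the paper omits.
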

 \noindent Some general results on identifying Harish-Chandra series of some unipotent modules using the formalism of Hom functors and $q$-Schur algebras were proved in \cite{DipperGruber} but the particular statement we prove next seems to be new. A special case, when $e=3$ and $\lambda=2^3$, was shown in \cite[Section 2]{GHJ}, where it is deduced from \cite[Lemma 3.16]{GHM2} and \cite[Proposition 2.3.5]{Gruber}.
 \begin{theorem}\label{inducing the type A Steinberg}Suppose $e$ is the order of $-q$ mod $\ell$, $e\geq 3$ is odd, and $\ell$ is sufficiently large.  \begin{enumerate}\item For any integer $k\geq 1$, the indecomposable direct summand of 
 		$R_{\GL_{ek}(q^2)}^{\GU_{2ek}(q)} P_{\st_{ek}}$ lying in the principal block is the projective indecomposable module $P_{2^{ek}}$. The unipotent part of $P_{2^{ek}}$ is the sum over all $2$-column partitions in which the number of rows equal to $2$ is congruent to $0$ or $1$ mod $e$. The 
 		module $R_{\GL_{ek}(q^2)}^{\GU_{2ek}(q)} \st_{ek}$ is indecomposable with simple head $X_{2^{ek}}$.
 		\item For any integer $k\geq 1$, the indecomposable direct summand of $R_{\GU_1(q)\times\GL_{ek}(q^2)}^{\GU_{2ek+1}(q)}P_1\otimes P_{\st_{ek}}$ lying in the principal block is the projective indecomposable module $P_{32^{ek-2}1^2}$. The 
 		module $R_{\GU_1(q)\times\GL_{ek}(q^2)}^{\GU_{2ek+1}(q)}X_1\otimes \st_{ek}$ is indecomposable with simple head $X_{32^{ek-2}1^2}$.
 	\end{enumerate}
 \end{theorem}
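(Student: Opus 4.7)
The plan is to follow the computational template from the proof of Theorem~\ref{up to 15}: compute the unipotent part of the Harish-Chandra induced projective character via the Pieri rule, cut to the principal block, and identify the result with $[P_{2^{ek}}]$ (respectively $[P_{32^{ek-2}1^2}]$) using unitriangularity of the decomposition matrix together with the cuspidal classification of Theorem~\ref{dvv cusp}. For Part~(1), since $\overline{P_{\st_{ek}}} = \St_{ek}$ is the ordinary Steinberg character of $\GL_{ek}(q^2)$ labeled by $(1^{ek})$, applying the Pieri rule via $\tau$ in the level-$2$ Fock space $\cF_{e,\bs_0}$ gives
\[
\overline{R^{\GU_{2ek}(q)}_{\GL_{ek}(q^2)} P_{\st_{ek}}} = \sum_{a+b = ek} \tau^{-1}|1^a.1^b,\bs_0\rangle,
\]
a sum of two-column partitions of $2ek$ (all with $2$-core $\emp$). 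Projecting to the principal block via Lemma~\ref{combinat2col}(1) yields
\[
\Psi = \sum_{\substack{0 \leq j \leq ek \\ j \equiv 0,1 \pmod{e}}} 2^j 1^{2ek-2j}.
\]
The dominance-maximal element of $\Psi$ is $2^{ek}$, appearing with coefficient one, so unitriangularity of the decomposition matrix \cite{Geck} gives that $[P_{2^{ek}}]$ is a summand of $\Psi$ with multiplicity one.

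The main obstacle is upgrading this to $\Psi = [P_{2^{ek}}]$, which I do by showing that for every other $\mu = 2^j 1^{2ek-2j}$ appearing in $\Psi$ the simple module $X_\mu$ is cuspidal; then $P_\mu$ cannot be a summand of any nontrivially Harish-Chandra induced projective, and the desired equality follows. To verify cuspidality, the abacus of $\tau(\mu) = |1^a.1^b,\bs_0\rangle$ (with $a+b = ek$) is very structured: each of the two rows consists of beads at all integers up to a specified maximum with at most one missing bead. By Theorem~\ref{dvv cusp}, cuspidality reduces to checking total $e$-periodicity (Theorem~\ref{tot per}) and avoidance of the $e+1$ forbidden patterns (Theorem~\ref{slinf source}), which can be carried out uniformly as a function of the positions of the two holes. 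This is the heart of the argument and the only place where substantial case analysis enters.

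For assertion (c), note that $\st_{ek}$ lies in the principal block of $\mathbbm{k}\GL_{ek}(q^2)$-mod (since $e \mid ek$ forces the $e$-core of $(1^{ek})$ to be $\emp$), so $R\st_{ek}$ lies in the principal block of $\mathbbm{k}\GU_{2ek}(q)$-mod. The surjection $P_{\st_{ek}} \twoheadrightarrow \st_{ek}$ induces a surjection $RP_{\st_{ek}} \twoheadrightarrow R\st_{ek}$; projecting to the principal block gives $P_{2^{ek}} \twoheadrightarrow R\st_{ek}$, and since $R\st_{ek} \neq 0$ it is a nonzero quotient of the projective indecomposable $P_{2^{ek}}$, hence indecomposable with simple head $X_{2^{ek}}$. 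Part~(2) follows the same template in the Fock space $\cF_{e,\bs_1}$: the Pieri rule produces a sum of partitions of $2ek+1$ with $2$-core $(1)$, and cutting to the principal block (characterized by $e$-core $(1)$) via Lemma~\ref{combinat2col}(2), supplemented by the observation that $1^{2ek+1}$ also has $e$-core $(1)$, gives a sum whose dominance-maximal element is $32^{ek-2}1^2$. The cuspidality verification for the other principal-block partitions and the identification of the head of $R(X_1 \otimes \st_{ek})$ as $X_{32^{ek-2}1^2}$ proceed as in Part~(1).
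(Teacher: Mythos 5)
Your overall strategy for the projective computation is the same as the paper's: apply the Pieri rule through $\tau$, cut to the principal block using Lemma \ref{combinat2col}, use lexicographic/dominance maximality of $2^{ek}$ together with unitriangularity to extract $P_{2^{ek}}$, and then argue that every other principal-block constituent labels a cuspidal so that the projective character equals $[\overline{P}_{2^{ek}}]$. The problem is that you have deferred exactly the step you yourself call the heart of the argument: the verification that $X_{2^j1^{2ek-2j}}$ is cuspidal for $j\equiv 0,1\bmod e$, $j\neq ek$. This is where the paper does its real work. Writing $\tau(\lambda)=|1^a.1^{ek-a},(-\tfrac{e+1}{2},0)\rangle$, the case $a\equiv 0\bmod e$ follows from \cite[Lemma 4.4]{GerberN}, but for $a\equiv\tfrac{e-1}{2}\bmod e$ one must exhibit the first $e$-period explicitly (it has $\tfrac{e+1}{2}$ beads in row $2$ and $\tfrac{e-1}{2}$ in row $1$), check that removing it reduces the abacus to the previous case, and separately verify the pattern-avoidance criterion of Theorem \ref{slinf source} under the hypothesis $a\neq\lfloor ek/2\rfloor$. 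Asserting that this "can be carried out uniformly as a function of the positions of the two holes" is a placeholder, not a proof; until it is executed the central claim $\Psi=[\overline{P}_{2^{ek}}]$ is unproved.

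The second genuine gap is in your argument for the last assertion. The claim that $R_{\GL_{ek}(q^2)}^{\GU_{2ek}(q)}\st_{ek}$ lies in the principal block of $\GU_{2ek}(q)$ because $(1^{ek})$ has empty $e$-core is a non sequitur: Harish-Chandra induction does not preserve blocks, and indeed the whole point of your computation is that $R_{\GL_{ek}(q^2)}^{\GU_{2ek}(q)}P_{\st_{ek}}$ has indecomposable summands in several blocks; the induced simple likewise has constituents with nonempty $e$-core in general. Your surjection therefore only identifies the principal-block component of $R\st_{ek}$ as a quotient of $P_{2^{ek}}$, and does not rule out nonzero summands in other blocks, which would destroy indecomposability. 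The paper closes this differently: every indecomposable summand $Z_i$ of $R_{\GL_{ek}(q^2)}^{\GU_{2ek}(q)}\st_{ek}$ has a simple head $Y_i$ whose projective cover divides $R_{\GL_{ek}(q^2)}^{\GU_{2ek}(q)}P_{\st_{ek}}$ and whose cuspidal support equals that of $\st_{ek}$, so $Y_i$ is weakly cuspidal but not cuspidal; since the bipartitions $1^a.1^{ek-a}$ outside the principal block are not totally $e$-periodic, the only candidate is $X_{2^{ek}}$, which forces every $Z_i$ into the principal block, and the multiplicity-one occurrence of $P_{2^{ek}}$ in the induced projective then gives a single summand. You should replace your block-membership claim with this cuspidal-support argument; the same two corrections are needed in Part (2).
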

 
 \begin{proof} (1)\; 
 	The unipotent part of the PIM $P_{\st}$ of $\GL_{ek}(q^2)$ is given by the partition $1^{ek}=\St_{ek}$. Using $\tau$ and performing the induction on the level of Weyl groups from $S_{ek}$ to $B_{ek}$, we get that \[\tau\left([R_{\GL_{ek}(q^2)}^{\GU_{2ek}(q)} \St_{ek}]\right)=1^{ek}.\emp + 1^{ek-1}.1 + 1^{ek-2}.1^2+\dots+1^2.1^{ek-2}+1.1^{ek-1}+\emp.1^{ek} \]
 	Applying the inverse to $\tau$ with $t=0$, 
 	\[R_{\GL_{ek}(q^2)}^{\GU_{2ek}(q)} \St_{ek}=1^{2ek}+2^21^{2ek-4}+2^41^{2ek-8}+\dots + 2^51^{2ek-10}+2^31^{2ek-6}+21^{2ek-2}\]
 	Since $\lambda$ and $\mu$ are in the same block if and only if $\lambda$ and $\mu$ have the same $e$-core, Lemma \ref{combinat2col} implies that the projection to the principal block of 
 	$[R_{\GL_{ek}(q^2)}^{\GU_{2ek}(q)}\St_{ek}]$ is the unipotent part of a projective character $\Psi$ given by
 	$$\Psi:=1^{2ek}+21^{2ek-2}+2^e1^{2ek-2e}+2^{e+1}1^{2ek-2e-2}+\dots+2^{ek}$$
 	Since $2^{ek}$ is maximal with respect to the lexicographic order among constitutents of $\Psi$, the projective cover $P_{2^{ek}}$ of $X_{2^{ek}}$ has to be a direct summand of the projective $P$ with character $\Psi$.

 	We claim that $P=P_{2^{ek}}$. This follows if $X_\lambda$ is cuspidal for all $\lambda\neq 2^{ek}$ appearing in $\Psi$. 
 	Indeed, if $X_\lambda$ is cuspidal then $P_\lambda$ cannot be a summand of $P$, because the projective cover of a cuspidal module cannot be the direct summand of the Harish-Chandra induction of a projective indecomposable module from a proper Levi subgroup. 
 	
 	To check that the partitions $\lambda=2^j1^{2ek-2j}$, $j=0$ or $1$ mod $e$, and $j\neq ek$, label cuspidal unipotent $\mathbbm{k}\GU_{2ek}(q)$-modules $X_\lambda$, we look at the abacus of $\tau(\lambda)$ for each such $\lambda$. The relevant charge $\bs_0$ for the Fock space is equivalent (up to a shift $(c,c)$ for some $c\in\Z$) to $(-\frac{e+1}{2},0)$. 
 	We have $\tau(\lambda)=1^a.1^{ek-a}$ with $a=0\mod e$ or $a=\frac{e-1}{2} \mod e$. If $a=0 \mod e$ then $\tau(\lambda)$ is totally $e$-periodic by \cite[Lemma 4.4]{GerberN} and therefore $X_\lambda$ is weakly cuspidal.
 	If, moreover, $a\neq \lfloor\frac{ek}{2}\rfloor$ then the abacus of $\tau(\lambda)$ also satisfies the pattern avoidance condition of Theorem \ref{slinf source} and so $X_\lambda$ is not only weakly cuspidal but actually cuspidal. 
 	
 	If $a=\frac{e-1}{2} $ mod $e$ and $a\neq \lfloor\frac{ek}{2}\rfloor$ then the abacus of $\tau(\lambda)$ satisfies the pattern avoidance condition of Theorem \ref{slinf source} and $\lambda$ is a source vertex of the $\slinf$-crystal. To verify that $\lambda$ is also a source vertex of the $\sle$-crystal we need to show that $\cA(\tau(\lambda))$ is totally $e$-periodic. We have $\tau(\lambda)\simeq|1^{\frac{e-1}{2}+em_1}.1^{\frac{e+1}{2}+em_2},(-\frac{e+1}{2},0)\rangle$ for $m_1+m_2=k-1$. Write $\cA:=\cA|1^{\frac{e-1}{2}+em_1}.1^{\frac{e+1}{2}+em_2},(-\frac{e+1}{2},0)\rangle$. Then 
 	$P:=\{(1,2),(0,2),(-1,2),\dots,(-\frac{e+1}{2}+2,2),(-\frac{e+1}{2}+1,1)(-e+2,1)\}\subset \cA$ but $(1,1),(0,1),\dots,(-\frac{e+1}{2}+2,1)\notin \cA.$  
 	By definition, the first $e$-period of $\cA$ then exists and is equal to $P$. If we remove $P$ from $\cA$ then the abacus $\cA\setminus P$ that remains is $\cA|1^{em_1}.1^{em_2},(-\frac{e+1}{2},0)\rangle$, and in it 
 	there are a multiple of $e$ beads to the right of the space in row $j$ that marks the bottom of the column partition $1^{em_j}$, $j=1,2$. Then by \cite[Lemma 4.4]{GerberN} the abacus $\cA\setminus P$ is totally $e$-periodic.  Therefore $\cA$ is totally $e$-periodic and  $\lambda$ is a source vertex of the $\sle$-crystal.

 	Therefore $P$ is indecomposable and equal to $P_{2^{ek}}$. We have $\tau(2^{ek})=|1^{\lfloor \frac{ek}{2}\rfloor}.1^{\lfloor \frac{ek}{2}\rfloor +\epsilon},(-\frac{e+1}{2},0)\rangle$ with $\epsilon=0$ if $k$ is even, and $\epsilon=1$ if $k$ is odd. Its abacus $\cA$ is totally $e$-periodic. 
 	Then $X_{2^{ek}}$ is weakly cuspidal and so cannot have cuspidal support on a Levi subgroup containing a factor of $\GL_1(q^2)$. The only totally $e$-periodic abaci of partitions $1^j.1^{ek-j}$ are those in the principal block (see the remarks on ``combinatorial blocks" in the proof of Theorem \ref{blocks with cuspidals}), so any other unipotent character appearing in $R_{\GL_{ek}(q^2)}^{\GU_{2ek}(q)} P_{\st_{ek}}$ belongs to the projective cover of a non-weakly cuspidal simple module.  It follows that $X_{2^{ek}}$ has the same cuspidal support as $\st_{ek}\in\mathbbm{k}\GL_{ek}(q^2)$, namely $(\GL_e(q^2)^k,\st_e^k)$, and that $R_{\GL_{ek}(q^2)}^{\GU_{2ek}(q)} \st_{ek}$ is indecomposable with simple head $X_{2^{ek}}$.
 	
 	Let us compute $\tilde{a}_{(1^k)}|\emp.\emp,(-\frac{e+1}{2},0)\rangle $. This is given by moving the $e$-periods $P_1,P_2,\dots,P_k$ of $\cA|\emp.\emp,\bs_0\rangle$ one step each to the right. For $e$ odd and charge $\bs_0=(-\frac{e+1}{2},0)$, the $e$-periods of $\cA|\emp.\emp,\bs_0\rangle$ form interlocking puzzle-pieces of alternating shapes: for $j$ odd, $P_j$ has $\frac{e+1}{2}$ beads in the top row and $\frac{e-1}{2}$ beads in the bottom row, while for $j$ even, $P_j$ is given by rotating $P_{j-1}$ $180$ degrees (so has $\frac{e-1}{2}$ beads in the top row, $\frac{e+1}{2}$ beads in the bottom row).
 	We then see that $\tilde{a}_{(1^k)}\emp.\emp = 1^{\lfloor \frac{ek}{2}\rfloor}.1^{\lfloor \frac{ek}{2}\rfloor +\epsilon}=\tau(2^{ek})$. The depth of $\tau(2^{ek})$ in the $\slinf$-crystal is therefore $k$ and its source vertex is $\emp.\emp$. Theorem \ref{conj} then predicts that the cuspidal support should be $(\GL_e(q^2)^k,\st_e^k)$, as we have found is the case.
 	This proves part (1).
 	
 	(2)\;The proof of part (2) is similar to the proof of part (1). Taking  $\sum_{m=0}^{ek}\tau^{-1}|1^{ek-m}.1^{m},\bs_1\rangle$ and then discarding all summands with $e$-core different from $(1)$, Lemma \ref{combinat2col} gives that the projection to the principal block of $[R_{\GU_1(q)\times\GL_{ek}(q^2)}^{\GU_{2ek+1}(q)}(1)\otimes\St_{ek}]$ is the sum over partitions of the form $32^j1^{2ek-2-2j}$ where $j=1$ or $-2$ mod $e$, plus the partition $1^{2ek+1}$. The largest of these in lexicographical order is $32^{ek-2}1^2$, and therefore $P_{32^{ek-2}1^2}$ is a direct summand of $R_{\GU_1(q)\times\GL_{ek}(q^2)}^{\GU_{2ek+1}(q)}P_1\otimes P_{1^{ek}}$. We have \[\tau(32^{ek-2}1^2)=\begin{cases}|1^{\frac{ek}{2}}.1^{\frac{ek}{2}},(-1,\frac{e+1}{2})\rangle\hbox{ if }k\hbox{ is even}\\
 	|1^{\frac{ek-3}{2}}.1^{\frac{ek-3}{2}+3},(-1,\frac{e+1}{2})\rangle\hbox{ if }k\hbox{ is odd}
 	\end{cases}
 	\]
 	It is then easy to check as in the proof of part (1) that $\tau(32^{ek-2}1^2)=\tilde{a}_{(k)}|\emp.\emp,\bs_1\rangle$. The module $X_{32^{ek-2}1^2}$ is weakly cuspidal since $\tau(32^{ek-2}1^2)$ is a source vertex of the $\sle$-crystal, and so $P_{32^{ek-2}1^2}$ being a summand of $R_{\GU_1(q)\times\GL_{ek}(q^2)}^{\GU_{2ek+1}(q)}P_1\otimes P_{1^{ek}}$ implies that $X_{32^{ek-2}1^2}$ has the same cuspidal support as $X_1\otimes X_{1^{ek}}\in\mathbbm{k}\left(\GU_1(q)\times\GL_{ek}(q^2)\right)$-mod.
 	This shows that Theorem \ref{conj} holds for the unipotent $\mathbbm{k}\GU_{2ek+1}$-module $X_{32^{ek-2}1^2}$. The verification that the other partitions $32^j1^{2ek-2-2j}$, $j=1$ or $-2$ mod $e$, label cuspidals $X_{32^j1^{2ek-2-2j}}$ is similar to the verification in Part (1). This concludes the proof of the theorem. 
 	
 \end{proof}
 
 \begin{example} Let $e=7$. We illustrate the interlocking pattern of $7$-periods on the abacus of $\emp.\emp$ with the charge $(-4,0)=(-\frac{e+1}{2},0)$:
 	$$
 	\dots\quad\TikZ{[scale=.5]
 		\draw
 		(0,0)node[fill,circle,inner sep=3pt]{}
 		(1,0)node[fill,circle,inner sep=3pt]{}
 		(2,0)node[fill,circle,inner sep=3pt]{}
 		(3,0)node[fill,circle,inner sep=3pt]{}
 		(4,0)node[fill,circle,inner sep=3pt]{}
 		(5,0)node[fill,circle,inner sep=3pt]{}
 		(6,0)node[fill,circle,inner sep=3pt]{}
 		(7,0)node[fill,circle,inner sep=3pt]{}
 		(8,0)node[fill,circle,inner sep=3pt]{}
 		(9,0)node[fill,circle,inner sep=3pt]{}
 		(10,0)node[fill,circle,inner sep=3pt]{}
 		(11,0)node[fill,circle,inner sep=3pt]{}
 		(12,0)node[fill,circle,inner sep=3pt]{}
 		(13,0)node[fill,circle,inner sep=3pt]{}
 		(14,0)node[fill,circle,inner sep=3pt]{}
 		(15,0)node[fill,circle,inner sep=3pt]{}
 		(16,0)node[fill,circle,inner sep=3pt]{}
 		(17,0)node[fill,circle,inner sep=3pt]{}
 		(18,0)node[fill,circle,inner sep=3pt]{}
 		(19,0)node[fill,circle,inner sep=.5pt]{}
 		(20,0)node[fill,circle,inner sep=.5pt]{}
 		(21,0)node[fill,circle,inner sep=.5pt]{}
 		(22,0)node[fill,circle,inner sep=.5pt]{}
 		(23,0)node[fill,circle,inner sep=.5pt]{}
 		(24,0)node[fill,circle,inner sep=.5pt]{}
 		(25,0)node[fill,circle,inner sep=.5pt]{}
 		(26,0)node[fill,circle,inner sep=.5pt]{}
 		(0,1)node[fill,circle,inner sep=3pt]{}
 		(1,1)node[fill,circle,inner sep=3pt]{}
 		(2,1)node[fill,circle,inner sep=3pt]{}
 		(3,1)node[fill,circle,inner sep=3pt]{}
 		(4,1)node[fill,circle,inner sep=3pt]{}
 		(5,1)node[fill,circle,inner sep=3pt]{}
 		(6,1)node[fill,circle,inner sep=3pt]{}
 		(7,1)node[fill,circle,inner sep=3pt]{}
 		(8,1)node[fill,circle,inner sep=3pt]{}
 		(9,1)node[fill,circle,inner sep=3pt]{}
 		(10,1)node[fill,circle,inner sep=3pt]{}
 		(11,1)node[fill,circle,inner sep=3pt]{}
 		(12,1)node[fill,circle,inner sep=3pt]{}
 		(13,1)node[fill,circle,inner sep=3pt]{}
 		(14,1)node[fill,circle,inner sep=3pt]{}
 		(15,1)node[fill,circle,inner sep=3pt]{}
 		(16,1)node[fill,circle,inner sep=3pt]{}
 		(17,1)node[fill,circle,inner sep=3pt]{}
 		(18,1)node[fill,circle,inner sep=3pt]{}
 		(19,1)node[fill,circle,inner sep=3pt]{}
 		(20,1)node[fill,circle,inner sep=3pt]{}
 		(21,1)node[fill,circle,inner sep=3pt]{}
 		(22,1)node[fill,circle,inner sep=3pt]{}
 		(23,1)node[fill,circle,inner sep=.5pt]{}
 		(24,1)node[fill,circle,inner sep=.5pt]{}
 		(25,1)node[fill,circle,inner sep=.5pt]{}
 		(26,1)node[fill,circle,inner sep=.5pt]{}
 		;
 		\draw[very thick,red] plot [smooth,tension=.1] coordinates{(22,1)(19,1)(18,0)(16,0)}; 
 		\draw[very thick,red] plot [smooth,tension=.1] coordinates{(18,1)(16,1)(15,0)(12,0)};
 		\draw[very thick,red] plot [smooth,tension=.1] coordinates{(15,1)(12,1)(11,0)(9,0)}; 
 		\draw[very thick,red] plot [smooth,tension=.1] coordinates{(11,1)(9,1)(8,0)(5,0)}; 
 		\draw[very thick,red] plot [smooth,tension=.1] coordinates{(8,1)(5,1)(4,0)(1,0)}; 
 		\draw[very thick,red] plot [smooth,tension=.1] coordinates{(4,1)(1,1)(0,0)(-.7,0)}; 
 		\draw[very thick,red] plot [smooth,tension=.1] coordinates{(0,1)(-.7,1)}; 
 	}\quad\dots$$
 	Suppose $k=4$. Moving $P_1,P_2,P_3,P_4$ one step each to the right, we obtain the abacus of $|1^{14}.1^{14},(-4,0)\rangle=\tau(2^{28})$:
 	$$
 	\dots\quad\TikZ{[scale=.5]
 		\draw
 		(0,0)node[fill,circle,inner sep=3pt]{}
 		(1,0)node[fill,circle,inner sep=3pt]{}
 		(2,0)node[fill,circle,inner sep=3pt]{}
 		(3,0)node[fill,circle,inner sep=3pt]{}
 		(4,0)node[fill,circle,inner sep=3pt]{}
 		(5,0)node[fill,circle,inner sep=.5pt]{}
 		(6,0)node[fill,circle,inner sep=3pt]{}
 		(7,0)node[fill,circle,inner sep=3pt]{}
 		(8,0)node[fill,circle,inner sep=3pt]{}
 		(9,0)node[fill,circle,inner sep=3pt]{}
 		(10,0)node[fill,circle,inner sep=3pt]{}
 		(11,0)node[fill,circle,inner sep=3pt]{}
 		(12,0)node[fill,circle,inner sep=3pt]{}
 		(13,0)node[fill,circle,inner sep=3pt]{}
 		(14,0)node[fill,circle,inner sep=3pt]{}
 		(15,0)node[fill,circle,inner sep=3pt]{}
 		(16,0)node[fill,circle,inner sep=3pt]{}
 		(17,0)node[fill,circle,inner sep=3pt]{}
 		(18,0)node[fill,circle,inner sep=3pt]{}
 		(19,0)node[fill,circle,inner sep=3pt]{}
 		(20,0)node[fill,circle,inner sep=.5pt]{}
 		(21,0)node[fill,circle,inner sep=.5pt]{}
 		(22,0)node[fill,circle,inner sep=.5pt]{}
 		(23,0)node[fill,circle,inner sep=.5pt]{}
 		(24,0)node[fill,circle,inner sep=.5pt]{}
 		(25,0)node[fill,circle,inner sep=.5pt]{}
 		(26,0)node[fill,circle,inner sep=.5pt]{}
 		(0,1)node[fill,circle,inner sep=3pt]{}
 		(1,1)node[fill,circle,inner sep=3pt]{}
 		(2,1)node[fill,circle,inner sep=3pt]{}
 		(3,1)node[fill,circle,inner sep=3pt]{}
 		(4,1)node[fill,circle,inner sep=3pt]{}
 		(5,1)node[fill,circle,inner sep=3pt]{}
 		(6,1)node[fill,circle,inner sep=3pt]{}
 		(7,1)node[fill,circle,inner sep=3pt]{}
 		(8,1)node[fill,circle,inner sep=3pt]{}
 		(9,1)node[fill,circle,inner sep=.5pt]{}
 		(10,1)node[fill,circle,inner sep=3pt]{}
 		(11,1)node[fill,circle,inner sep=3pt]{}
 		(12,1)node[fill,circle,inner sep=3pt]{}
 		(13,1)node[fill,circle,inner sep=3pt]{}
 		(14,1)node[fill,circle,inner sep=3pt]{}
 		(15,1)node[fill,circle,inner sep=3pt]{}
 		(16,1)node[fill,circle,inner sep=3pt]{}
 		(17,1)node[fill,circle,inner sep=3pt]{}
 		(18,1)node[fill,circle,inner sep=3pt]{}
 		(19,1)node[fill,circle,inner sep=3pt]{}
 		(20,1)node[fill,circle,inner sep=3pt]{}
 		(21,1)node[fill,circle,inner sep=3pt]{}
 		(22,1)node[fill,circle,inner sep=3pt]{}
 		(23,1)node[fill,circle,inner sep=3pt]{}
 		(24,1)node[fill,circle,inner sep=.5pt]{}
 		(25,1)node[fill,circle,inner sep=.5pt]{}
 		(26,1)node[fill,circle,inner sep=.5pt]{}
 		;
 		\draw[very thick,red] plot [smooth,tension=.1] coordinates{(23,1)(20,1)(19,0)(17,0)}; 
 		\draw[very thick,red] plot [smooth,tension=.1] coordinates{(19,1)(17,1)(16,0)(13,0)};
 		\draw[very thick,red] plot [smooth,tension=.1] coordinates{(16,1)(13,1)(12,0)(10,0)}; 
 		\draw[very thick,red] plot [smooth,tension=.1] coordinates{(12,1)(10,1)(9,0)(6,0)}; 
 		\draw[very thick,red] plot [smooth,tension=.1] coordinates{(8,1)(5,1)(4,0)(1,0)}; 
 		\draw[very thick,red] plot [smooth,tension=.1] coordinates{(4,1)(1,1)(0,0)(-.7,0)}; 
 		\draw[very thick,red] plot [smooth,tension=.1] coordinates{(0,1)(-.7,1)}; 
 	}\quad\dots$$
 	This graphically symbolizes the equality $\tau(2^{28})=\tilde{a}_{(1^4)}(\emp.\emp)$. By Theorem \ref{inducing the type A Steinberg}, the cuspidal support of the unipotent $\mathbbm{k}\GU_{56}(q)$-module $X_{2^{28}}$ is equal to $(\GL_7(q^2)^4,\st_7^4)$, the number of copies of $(\GL_7(q),\st_7)$ in the cuspidal support of $X_{2^{28}}$ being equal to the depth of $\tau(2^{28})$ in the $\slinf$-crystal.
 \end{example}

\section*{Acknowledgments} The author thanks Ting Xue and Olivier Dudas for stimulating discussions, 
Thomas Gerber and Gerhard Hiss for answering some questions about their work and the state of the art, and Gunter Malle for reading the manuscript and suggesting some improvements. Many computations were done in Sage \cite{sage}. During the first two weeks of work on this paper, the author was supported by Max Planck Institute for Mathematics, Bonn. The rest of the time the author was supported at TU Kaiserslautern by the grant SFB-TRR 195.
 
\bibliography{SlinfCrystalGUn}{}
\bibliographystyle{plain}

\end{document}